\documentclass[preprint, 11pt]{elsarticle}
\usepackage{lineno}
\pdfoutput=1

\usepackage{amsmath}
\usepackage{amssymb}
\usepackage{mathtools}
\usepackage{mathrsfs}
\usepackage{color}
\usepackage{xcolor}
\usepackage{graphicx}
\usepackage[percent]{overpic}
\usepackage{url}
\usepackage{caption}
\usepackage{subcaption}
\usepackage{enumerate}
\usepackage{overpic}
\usepackage{amsthm}
\usepackage{moreverb}
\usepackage{multirow}
\usepackage{multicol}
\usepackage[pdftex,colorlinks,bookmarksopen,bookmarksnumbered,citecolor=red,urlcolor=red]{hyperref}
\usepackage{mwe}
\usepackage{graphbox}
\usepackage{textcomp}
\usepackage{tabularx}
\usepackage{algorithm}
\usepackage{algorithmicx}
\usepackage{algpseudocode}
\newtheorem{theorem}{Theorem}[section]
\newtheorem{lemma}[theorem]{Lemma}

\def\b{{\bm b}}

\def\x{{\bm x}}

\def\0{\boldsymbol{0}}

\def \b0{{\mathbf 0}}

\newcommand{\bm}[1]{\mbox{\boldmath{$#1$}}}
\DeclarePairedDelimiter\abs{\lvert}{\rvert}%
\newcommand\dual[1]{\left\langle#1\right\rangle}%
\definecolor{ForestGreen}{RGB}{34,139,34}
\newcommand\norm[1]{\left\lVert#1\right\rVert}

\newcommand{\lander}[2][orange]{{\textcolor{#1}{#2}}}

\usepackage{verbatim}
\usepackage{graphicx}
\usepackage{epstopdf}
\begin{document}

%% ELSEVIER %%
\begin{frontmatter}

\title{Finite element approximation for a reformulation of a 3D fluid–2D plate interaction system}
%\tnotetext[mytitlenote]{Fully documented templates are available in the elsarticle package on \href{http://www.ctan.org/tex-archive/macros/latex/contrib/elsarticle}{CTAN}.}

%% Group authors per affiliation:
%\author{Elsevier\fnref{myfootnote}}
%\address{Radarweg 29, Amsterdam}
%\fntext[myfootnote]{Since 1880.}

%% or include affiliations in footnotes:
%\author[mymainaddress,mysecondaryaddress]{Elsevier Inc}
%\ead[url]{www.elsevier.com}

%\author[mysecondaryaddress]{Global Customer Service\corref{mycorrespondingauthor}}
%\cortext[mycorrespondingauthor]{Corresponding author}
%\ead{support@elsevier.com}

%\address[mymainaddress]{1600 John F Kennedy Boulevard, Philadelphia}
%\address[mysecondaryaddress]{360 Park Avenue South, New York}

%% or include affiliations in footnotes:
\author{Lander Besabe}
\ead{lbesabe@clemson.edu}

\author{Hyesuk Lee}
\ead{hklee@clemson.edu}

\address{School of Mathematical and Statistical Sciences, Clemson University, Clemson, SC 29634-0975, USA}

\begin{abstract}
We study a finite element approximation of a coupled fluid-structure interaction consisting of a three-dimensional incompressible viscous fluid governed by the unsteady Stokes equations and a
two-dimensional elastic plate.  To avoid the use of $H^2-$conforming or nonconforming $\mathbb{P}_2$-Morley plate elements, the fourth-order plate equation is reformulated into a system of coupled second-order equations using an auxiliary variable. The coupling condition is enforced using a Lagrange multiplier representing the trace of the mean-zero fluid pressure on the interface.

We establish well-posedness and stability results for the time-discrete and fully-discrete problems, and derive a priori error estimates. A partitioned domain decomposition algorithm based on a fixed-point iteration is employed for the numerical solution. Numerical experiments verify the theoretical rates of convergence in space and time using manufactured solutions, and demonstrate the applicability of the method to a physical problem.
\end{abstract}

\begin{keyword}
Fluid-structure interaction \sep
Mixed finite element methods \sep
Lagrange multiplier methods \sep
Stokes equations \sep
Kirchhoff-Love plate
\end{keyword}

\end{frontmatter}

\section{Introduction} \label{sec:intro}

Blood flow \cite{Quaini2011, Duca2025, Quarteroni2000}, aeroelasticity \cite{Zheng2023, Svacek2008}, microfluidic devices \cite{Hashim2012}, and systems involving flexible walls \cite{Sabbar2018, Nicolici2013} are commonly modeled as fluid-structure interaction (FSI) systems of a viscous fluid coupled with a thin elastic structure, such as a plate. These problems consist of an unsteady viscous flow in a three-dimensional domain interacting with an elastic structure on (a portion of) the boundary of the domain. Even in this simplified setting, this interaction may exhibit nontrivial behavior such as added-mass effects \cite{Brummelen2009, Causin2005, Richter2017} and strong coupling, which introduce issues in the stability of numerical simulations.

From a numerical standpoint, the development of accurate and efficient simulation of the interaction of a 3D fluid and a 2D plate remains challenging \cite{Geredeli_Kunwar_Lee2024}. Aside from the fact that the interface requires careful treatment for stability, the plate models are governed by a fourth-order PDE. This means that standard conforming finite element (FE) discretization requires globally $H^2$ continuous shape functions, which typically restricts the choice to higher-order elements or nonconforming methods like $\mathbb{P}_2$-Morley elements \cite{Gallistl2015}. Nonconforming FE approaches alleviate some of these constraints but introduce additional technical complications and can complicate interface coupling or domain decomposition implementation \cite{Brenner2018}.

A classical alternative to these approaches is to reformulate the fourth-order plate subproblem as a mixed system involving only second-order operators, thereby avoiding the need for $H^2$-conforming elements. The FEM analysis of such problems involving the dynamics of a Kirchhoff plate under various boundary conditions have been extensively studied, see e.g., in \cite{Das2024, Gudi2008, Li2023}.

Relatively few works in the literature have considered the 3D viscous fluid–2D Kirchhoff plate interaction problem, especially with rigorous FEM analysis. The key differences in the works discussed below lie  in how they handle the fourth-order operator associated to the plate dynamics. In \cite{Avalos2014}, quintic Argyris basis functions are used for the plate displacement, as they are the lowest order $H^2_0$-conforming finite elements available \cite{Solin2005}. In \cite{Geredeli_Kunwar_Lee2024, Avalos2025}, nonconforming $\mathbb{P}_2$-Morley elements are utilized for the spatial discretization of the plate equation. In \cite{Geredeli2026}, $H^2$-conforming Hermite basis functions were used for the plate subdomain of the interaction problem. Lastly, \cite{Cheng2008} considers a slightly different but related problem in which a viscous acoustic fluid interacts with a plate. For the structure component, the framework for handling Mindlin plates developed in \cite{Hinton1986} was employed. Hence, to the best of our knowledge, existing numerical approaches for this class of problems typically either use (i) $H^2$-conforming plate elements (e.g., Argyris elements) or (ii) avoid high-order elements by using nonconforming elements like $\mathbb{P}_2$-Morley.

The main contribution of this work is the formulation and analysis of a finite element method for the 3D fluid-2D plate interaction problem with hinged boundary conditions. First, the plate model, originally stated as a fourth-order PDE, is reformulated into two coupled second-order PDEs which avoids the need for $H^2$-conforming or nonconforming $\mathbb{P}_2$-Morley elements, and allows for considerable flexibility in the choice of discrete approximation spaces for the plate displacement. Second, to enforce the coupling on the interface, we use a Lagrange multiplier approach, as done in \cite{Geredeli_Kunwar_Lee2024, Avalos2025}. For the discrete-in-time, continuous-in-space problem, we establish well-posedness and stability results. For the fully discrete finite element approximation, we show well-posedness and provide a convergence analysis.  

The remainder of the paper is organized as follows. The coupled fluid-plate model, its weak formulation, and the use of a Lagrange multiplier method to enforce interface coupling are discussed in Sec.~\ref{sec:gov_eq}. In Sec.~\ref{sec:semi-disc}, we establish the well-posedness and stability of the semi-discrete continuous-in-space problem. We present the fully discrete problem, establish its well-posedness, and the corresponding error analysis
%{\color{red} and the domain decomposition algorithm - delete if the algorithm is deleted from the paper.}
in Sec.~\ref{sec:fully-disc}. In Sec.~\ref{sec:num_res}, two numerical experiments are presented which show that the method achieves the optimal rates of convergence and its applicability to a physical problem. Lastly, we draw some conclusions and future outlook in Sec.~\ref{sec:conclusions}.

\begin{comment}
Novelty/highlights in the method proposed:
\begin{itemize}
    \item this formulation (reducing the biharmonic operator to a second-order equation and adding a Poisson problem) allows for flexibility on the choice of the discrete FE spaces; as opposed to having the choice of P4/5 or P2Morley
    \item Use of domain decomposition: often improves efficiency and scalability while remaining flexible, compared to monolithic approach.
\end{itemize}

\cite{Ciarlet1974} established the well-posedness of the mixed variational formulation of the biharmonic problem with clamped boundary.
\end{comment}

\section{Governing Equations}\label{sec:gov_eq}

We consider a fluid in a bounded domain $\Omega_f\subset\mathbb{R}^3$ with sufficiently smooth boundary $\partial\Omega_f = \bar{S}\cup\bar{\Omega}_p$, with $\Omega_p\cap S = \emptyset$, and
\begin{equation*}
    \Omega_p \subset \{\bm{x} = (x_1, x_2, 0)\} \text{ and surface } S \subset \{\bm{x} = (x_1, x_2, x_3):x_3<0\}.
\end{equation*}
Note that the dynamics of the fluid is described by the linear unsteady Stokes equations. We also consider the dynamics of the hinged plate structure in $\Omega_p$ coupled with the dynamics of the fluid underneath it. We describe its dynamics with either the "Euler-Bernoulli" (no rotation) or "Kirchhoff" (with rotation) equations. With the rotational inertia parameter $\rho \geq 0$, the full problem reads: for a final time $T>0$, find velocity $\bm{u} = (u_1, u_2, u_3)^T:\Omega_f\times(0,T)\rightarrow\mathbb{R}^3$, pressure $p:\Omega_f\times(0,T)\rightarrow\mathbb{R}$, and plate displacement $w:\Omega_p\times(0,T)\rightarrow\mathbb{R}$ such that
\begin{align} \label{eq:momentum}
    &\rho_f\partial_t\bm{u} - \nu_f \Delta \bm{u} + \nabla p = \bm{f} &&\text{in }\Omega_f\times(0,T), \\
    &\nabla\cdot\bm{u} = 0, &&\text{in }\Omega_f\times(0,T),\label{eq:continuity} \\
    &\bm{u} = \bm{0} &&\text{on }S\times(0,T),\label{eq:fluid_bc} \\
    &\partial_{tt}w - \rho\partial_{tt}\Delta w + \Delta^2 w = p\vert_{\Omega_p} &&\text{in }\Omega_p\times(0,T), \label{eq:plate_eq} \\
    &w = \Delta w = 0 &&\text{on }\partial\Omega_p\times(0,T), \label{eq:clapmedbc} \\
    &\bm{u} = (u_1, u_2, u_3)^T = (0, 0, \dot{w})^T &&\text{on }\partial\Omega_p\times(0,T), \label{eq:interface}
\end{align}
where $\partial_t u = \frac{\partial \bm{u}}{\partial t}$, $\partial_{tt}w = \frac{\partial^2 w}{\partial t^2}$, $\rho_f$ is the fluid density, $\nu_f$ denotes the kinematic viscosity of the fluid, $\bm{f}$ represents the external body force. To close the system, we impose the initial conditions
\begin{equation} \label{eq:IC}
    (\bm{u}, w, \partial_t w) = (\bm{u}_0, w_0, w_{t0}).
\end{equation}

Note that in \cite{Geredeli_Kunwar_Lee2024, Avalos2014}, the plate displacement $w$ is assumed to be in $H^2(\Omega_p)$. One of our goals is to relax this regularity for $w$. Hence, we lower the spatial order of the plate equation by introducing an auxiliary variable $z$ such that 
\begin{equation}\label{eq:poisson_variable}
    z = -\Delta w.
\end{equation}
This transforms \eqref{eq:plate_eq}-\eqref{eq:clapmedbc} into the system of second-order PDEs given by
\begin{align}
    &\partial_{tt}w + \rho \partial_{tt}z - \Delta z = p\vert_{\Omega_p} &&\text{in }\Omega_p\times(0,T), \label{eq:plate_eq_order2}\\
    &z + \Delta w = 0 &&\text{in }\Omega_p\times(0,T),\label{eq:plate_poisson}\\
    &w = z = 0 &&\text{on }\partial\Omega_p\times(0,T). \label{eq:clapmedbc2}
\end{align}
This requires adding two more initial conditions
\begin{equation*}
    (z, \partial_t z) = (z_0, z_{t0}).
\end{equation*}
Similar technique has been analyzed for fourth-order problems in \cite{Ciarlet1974, Das2024, Gudi2008, Glowinski1979, Monk1987, Li2023} with various types of boundary conditions in both the continuous and discrete setting.

Following \cite{Geredeli_Kunwar_Lee2024}, we begin by defining the following spaces
\begin{align*}
    &\bm{U} = \{\bm{v} = (v_1, v_2, v_3)^T\in [H^1(\Omega_f)]^3: v_1 = v_2 = 0 \text{ on }\Omega_p, \bm{v} = \bm{0} \text{ on }S\},\\
    &Q = L^2(\Omega_f), \quad Q_0 = L^2_0(\Omega_f) = \left\{q\in L^2(\Omega_f): \int_{\Omega_f} q\,d\Omega_f = 0\right\}, \text{ (mean zero pressure space)}\\
    &\bm{V} = \{\bm{v}\in\bm{U}: (q, \nabla\cdot\bm{v})_{\Omega_f} = 0, \forall q\in Q\},\text{ (div-fee space)} \\
    &W = \left\{\varphi\in H^1(\Omega_p):\varphi = 0 \text{ on }\partial\Omega_p\right\}.%,\\
    %&Z = H^1(\Omega_p).
\end{align*}

Similar to the approach in \cite{Avalos2025}, we decompose the pressure $p$ appearing in \eqref{eq:momentum}, \eqref{eq:plate_eq} into its zero mean component $p_0\in Q_0$ and spatial average $s\in \mathbb{R}$. More precisely, we write
\begin{equation} \label{eq:press_decomp}
    p(\bm{x}, t) = p_0(\bm{x}, t) + s(t), \text{ where } s(t) = \int_{\Omega_f} p(\bm{x}, t)\,d\Omega_f.
\end{equation}
To enforce the coupling condition on the interface $\Omega_p$, we introduce a Lagrange multiplier $g\in G$ defined as the trace of the zero-mean pressure $p_0$ on the plate domain:  
\begin{equation} \label{eq:LM}
    g = p_0|_{\Omega_p}.
\end{equation}
%Let $\bm{\sigma}_f = \nu_f\nabla\bm{u} - p\bm{I}$ be the stress tensor. We introduce the Lagrange multiplier $g\in G = H^{-1/2}(\Omega_p)$ to be
%\begin{equation}
%    g \coloneqq (\bm{\sigma}\bm{n}_f)_3 = -p \quad \text{on }\Omega_p 
%\end{equation}
%with the outward unit normal to $\Omega_f$ defined to be $\bm{n}_f = (0, 0, 1)^T$. This last equality comes from the fact that $\bm{n}_f = (0, 0, 1)^T$, $(u_1, u_2) = (0, 0)$ on $\Omega_p$, and $\nabla \cdot \bm{u} = 0$. So, the term $\partial_zu_3$ should also be zero and only the pressure term survives.
Indeed, it is also easy to see that
\begin{equation} \label{eq:wdot_zeromean}
    \int_{\Omega_p} \partial_t w \,d\Omega_p = 0
\end{equation}
which stems from the matching third component of the velocity of the fluid with the velocity of the plate and the divergence-free condition for $\bm{u}$.
Readers who are interested in a deeper discussion on the model are referred to, e.g., \cite{Avalos2014, Chueshov2013}.

Let $(\cdot,\cdot)_\gamma$ be the $L^2$ inner product over $\gamma$, $\norm{\cdot}_\gamma$ be the corresponding norm, and $\dual{\cdot, \cdot}_\gamma$ be the dual pairing between $H^{1/2}(\gamma)$ and $H^{-1/2}(\gamma)$. We also define the $H^1$ Sobolev norm of $f$ by $\norm{f}_{1, \gamma}^2 = \norm{f}_{\gamma}^2 + \norm{\nabla f}_{\gamma}^2$. The variational problem associated to \eqref{eq:momentum}-\eqref{eq:fluid_bc}, \eqref{eq:plate_eq_order2}-\eqref{eq:clapmedbc2} with \eqref{eq:press_decomp}-\eqref{eq:LM} reads: find $(\bm{u}, p_0, w, z, g, s)\in \bm{U}\times Q_0\times W\times Z\times G \times \mathbb{R}$ such that
\begin{align} \label{eq:momentum_weak}
    &(\partial_t\bm{u}, \bm{v})_{\Omega_f} + \nu_f(\nabla \bm{u}, \nabla\bm{v})_{\Omega_f} - (p_0, \nabla\cdot\bm{v})_{\Omega_f} + \dual{g, v_3}_{\Omega_p} = (\bm{f}_f, \bm{v})_{\Omega_f} && \forall \bm{v}\in\bm{U},\\
    &(q, \nabla\cdot \bm{u})_{\Omega_f} = 0 &&\forall q\in Q_0, \label{eq:continuity_weak} \\
    & (\partial_{tt}w, \eta)_{\Omega_p} + \rho(\partial_{tt}z, \eta)_{\Omega_p} + (\nabla z, \nabla \eta)_{\Omega_p} - \dual{g, \eta}_{\Omega_p} - (s, \eta)_{\Omega_p} = 0 &&\forall\eta\in W, \label{eq:plate_weak}\\
    &(\nabla w, \nabla \varphi)_{\Omega_p} - (z, \varphi)_{\Omega_p} = 0 && \forall\varphi\in W, \label{eq:poisson_weak}\\
    & \dual{u_3 -\partial_t w, \lambda}_{\Omega_p} = 0 &&\forall\lambda\in G. \label{eq:interface_weak}
\end{align}
%The well-posedness of the 3D fluid 2D plate interaction problem has been extensively studied in \cite{Avalos2014, Avalos2025}.

% Lagrange multiplier, inner product, outward unit normal n

\section{Semi-discrete problem} \label{sec:semi-disc}

In this section, we consider the semi-discrete problem  for the fluid-plate system 
\eqref{eq:momentum_weak}-\eqref{eq:interface_weak} and 
study its well-posedness and 
stability. The well-posedness of the steady state case of the 3D fluid 2D plate interaction problem has been extensively studied in \cite{Avalos2014, Avalos2025}, where the plate dynamics are modeled using the original fourth-order plate equation with a clamped boundary condition.

We begin by dividing the interval $[0,T]$ into $N\in\mathbb{Z}^+$ subintervals of length $\delta t = T/N$ and denote $t^n = n\delta t$ for $n = 0,\dots,N$. For any given quantity $f$, we denote its approximation at time $t^n$ by $f^n$.
We discretize the variational problem \eqref{eq:momentum_weak}-\eqref{eq:interface_weak} using the first-order backward Euler approximation given by
\begin{equation} \label{eq:back_euler}
    \partial_{t}\bm{u}\approx \dot{\bm{u}}^{n+1}\coloneqq\frac{\bm{u}^{n+1}-\bm{u}^{n}}{\delta t}, \quad \partial_{tt}w\approx \ddot{w}^{n+1}\coloneqq\frac{\dot{w}^{n+1}-\dot{w}^n}{\delta t} = \frac{w^{n+1} - 2w^n + w^{n-1}}{(\delta t)^2}
\end{equation}
Hence, the semi-discrete weak form of the 3D fluid - 2D plate interaction system is 
\begin{align} \label{eq:momentum_disc_weak}
    &\begin{aligned}
    &\rho_f(\bm{u}^{n+1}, \bm{v})_{\Omega_f} + \nu_f\delta t(\nabla \bm{u}^{n+1}, \nabla\bm{v})_{\Omega_f} - \delta t(p^{n+1}_0, \nabla\cdot \bm{v})_{\Omega_f} + \delta t\dual{g^{n+1}, v_3}_{\Omega_p}\\
    &\qquad = \delta t(\bm{f}^{n+1}, \bm{v})_{\Omega_f} + \rho_f (\bm{u}^n, \bm{v})_{\Omega_f}
    \end{aligned}
     &&\forall \bm{v}\in \bm{U},\\
    &(\nabla\cdot \bm{u}^{n+1}, q)_{\Omega_f} = 0 &&\forall q\in Q_0,\label{eq:continuity_disc_weak} \\
    &\begin{aligned}
    &\frac{1}{\delta t}(w^{n+1}, \eta)_{\Omega_p} + \frac{\rho}{\delta t}(z^{n+1}, \eta)_{\Omega_p} +\delta t (\nabla z^{n+1}, \nabla \eta)_{\Omega_p} - \delta t\dual{g^{n+1}, \eta}_{\Omega_p} \\
    &- \delta t(s^{n+1}, \eta)_{\Omega_p} = \frac{2}{\delta t}(w^n, \eta)_{\Omega_p} - \frac{1}{\delta t}(w^{n-1}, \eta)_{\Omega_p} + \frac{2\rho}{\delta t} (z^n, \eta)_{\Omega_p} - \frac{\rho}{\delta t} (z^{n-1}, \eta)_{\Omega_p} 
    \end{aligned}
    &&\forall \eta\in W, \label{eq:plate_disc_weak}\\
    & (z^{n+1}, \varphi)_{\Omega_p} = (\nabla w^{n+1}, \nabla \varphi)_{\Omega_p} &&\forall \varphi\in W, \label{eq:poisson_semi_disc_weak} \\
    & \dual{u_3^{n+1}, \lambda}_{\Omega_p} - \frac{1}{\delta t}\dual{w^{n+1}, \lambda}_{\Omega_p} = \frac{1}{\delta t}\dual{w^{n}, \lambda}_{\Omega_p} &&\forall \lambda \in G. \label{eq:interface_disc_weak}
\end{align}
From \eqref{eq:wdot_zeromean} we note that $(\partial_t w,c)_{\Omega_p} = 0$ for any $c \in \mathbb{R}$ and
\eqref{eq:interface_weak} can be written as 
$\dual{u_3 - \partial_t w , \lambda}_{\Omega_p} - 
(\partial_t w, c)_{\Omega_p}= 0$. Thus, \eqref{eq:interface_disc_weak}
may be replaced by 
$$
\dual{u_3^{n+1}, \lambda}_{\Omega_p} -\frac{1}{\delta t}\dual{w^{n+1}, \lambda+c}_{\Omega_p}  = \frac{1}{\delta t}\dual{w^{n}, \lambda+c}_{\Omega_p} \quad \forall 
(\lambda,c) \in G \times \mathbb{R}.
$$
Therefore, 
this system, \eqref{eq:momentum_disc_weak}-\eqref{eq:interface_disc_weak}, may be stated as the following saddle point system: \\find $\left(\bm{u}^{n+1}, w^{n+1}, z^{n+1}, p^{n+1}, g^{n+1}, s^{n+1}\right)\in \bm{U}\times W\times W\times Q\times G \times \mathbb{R}$ such that
\begin{equation} \label{eq:saddle_point_prob}
    \begin{aligned}
        a_{\delta t}\left((\bm{u}^{n+1}, w^{n+1}, z^{n+1}), (\bm{v}, \varphi, \eta)\right) + b_{\delta t}\left((\bm{v}, \eta), \left(\delta t p_0^{n+1}, \delta t g^{n+1}, \delta t s^{n+1}\right)\right) &= f^1_{\delta t}(\bm{v}, \varphi, \eta), \\
        b_{\delta t}\left(\left(\bm{u}^{n+1}, \frac{1}{\delta t}w^{n+1}\right),(q, \lambda, c)\right) &= f_{\delta t}^2 (q, \lambda),
    \end{aligned}
\end{equation}
where $a_{\delta t}: (\bm{U}\times W\times W) \times (\bm{U}\times W\times W) \to \mathbb{R}$, $b_{\delta t}:(\bm{U}\times W)\times (Q\times G) \to \mathbb{R}$, $f_{\delta t}^1:\bm{U}\times W\times W \to \mathbb{R}$, %{\color{red} W instead of Z?}
and $f_{\delta t}^2:Q\times G \to \mathbb{R}$ are defined by
\begin{align}
    &\begin{aligned}
        a_{\delta t}&\left((\bm{u}, w, z), (\bm{v}, \varphi, \eta) \right) \coloneqq \rho_f(\bm{u}, \bm{v})_{\Omega_f} + \nu_f\delta t(\nabla\bm{u}, \nabla\bm{v})_{\Omega_f} + \frac{1}{\delta t}(w, \eta)_{\Omega_p} + \frac{\rho}{\delta t}(z, \eta)_{\Omega_p} \\
        & + \delta t(\nabla z, \nabla\eta) - \frac{1}{\delta t}(z, \varphi)_{\Omega_p} + \frac{1}{\delta t}(\nabla w, \nabla \varphi)_{\Omega_p}
    \end{aligned}\\
    & b_{\delta t}\left((\bm{v}, \eta), (q, \lambda, c)\right) \coloneqq -(\nabla\cdot \bm{v}, q)_{\Omega_f} + \dual{v_3 - \eta, \lambda}_{\Omega_p} - (c, \eta)_{\Omega_p},\\
    &\begin{aligned}
        f_{\delta t}^1&(\bm{v}, \varphi, \eta) \coloneqq \delta t(\bm{f}^{n+1}, \bm{v})_{\Omega_f} + \rho_f (\bm{u}^n, \bm{v})_{\Omega_f} + \frac{2}{\delta t}(w^n, \eta)_{\Omega_p} - \frac{1}{\delta t}(w^{n-1}, \eta)_{\Omega_p} \\
        & + \frac{2\rho}{\delta t} (\nabla w^n, \nabla\eta)_{\Omega_p} - \frac{\rho}{\delta t} (\nabla w^{n-1}, \nabla\eta)_{\Omega_p},
    \end{aligned}\\
    & f_{\delta t}^2(q, \lambda, c) \coloneqq  \frac{1}{\delta t}\dual{w^{n}, \lambda}_{\Omega_p} + \frac{1}{\delta t}(w^{n}, c)_{\Omega_p}.
\end{align}

Note that $a_{\delta t}(\cdot, \cdot)$ can be easily shown to be coercive. Let $X = \bm{U}\times W\times W$ and  $(\bm{v}, \varphi, \eta)\in X$ with norm
\begin{equation*}
    \norm{(\bm{v}, \varphi, \eta)}_{X}^2 = \norm{\bm{v}}_{1,\Omega_f}^2 + \norm{\varphi}_{1,\Omega_p}^2 + \norm{\eta}_{1,\Omega_p}^2.
\end{equation*}
Then, we see that by the Poincar\'{e}-Friedrich inequality, we have
\begin{equation} \label{eq:semi_disc_coercive}
    \begin{aligned}
        a_{\delta t}\left((\bm{v}, \varphi, \eta), (\bm{v}, \varphi, \eta) \right) &= \rho_f\norm{\bm{v}}_{\Omega_f}^2 + \nu_f\delta t\norm{\nabla\bm{v}}_{\Omega_f}^2 + \frac{\rho}{\delta t}\norm{\eta}_{\Omega_p}^2 + \delta t\norm{\nabla\eta}_{\Omega_p}^2 + \frac{1}{\delta t}\norm{\nabla\varphi}_{\Omega_p}^2 \\
        &\geq \rho_f\norm{\bm{v}}_{\Omega_f}^2 + \nu_f\delta t\norm{\nabla\bm{v}}_{\Omega_f}^2 + \frac{\rho}{\delta t}\norm{\eta}_{\Omega_p}^2 + \delta t\norm{\nabla\eta}_{\Omega_p}^2 + \frac{1}{2\delta t(C_P^2 + 1)}\norm{\varphi}_{1,\Omega_p}^2\\
        &\geq \alpha  \norm{(\bm{v}, \varphi, \eta)}_{X}^2
    \end{aligned}
\end{equation}
 where $\alpha = \min\{\rho_f, \nu_f\delta t, \rho/\delta t, \delta t, 1/(2\delta t (C_P^2 + 1))\}$ and $C_P$ is the Poincar\'{e} constant. \\

Next, we show that the bilinear operator $b_{\delta t}(\cdot, \cdot)$ satisfies the inf-sup condition. 
%First, we recall a result in \cite{de_Castro2025} {\color{red} [20] instead of [28]?} which is necessary for the proof of the inf-sup condition.

\begin{lemma} \label{thm:semi-disc-infsup}
    There exists a positive constant $\beta$ such that
    \begin{equation}
        \sup_{(\bm{v}, \eta)\in \bm{U}\times W}\frac{b_{\delta t}((\bm{v}, \eta), (q, \lambda, c))}{\norm{\bm{v}}_{1,\Omega_f} + \norm{\eta}_{1,\Omega_p}} \geq \beta \left(\norm{q}_{\Omega_f} + \norm{\lambda}_{H^{1/2}(\Omega_p)} + \abs{c}\right)
    \end{equation}
\end{lemma}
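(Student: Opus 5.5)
The plan is a Fortin-type superposition. I build three test pairs, one controlling each of $q$, $\lambda$ and $c$, and then take a weighted combination so that the cross terms are absorbed by Young's inequality. Throughout I read the pressure variable as $q\in Q_0$, since the constant mode of the pressure is carried by $c$. I read $G$ as the dual of the trace space of $v_3$ on $\Omega_p$, so that the norm on $\lambda$ in the statement is the natural dual norm; below I write it $\norm{\lambda}_{G}$.

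\emph{Step 1 (the pressure).} By the classical Stokes inf-sup condition on $[H^1_0(\Omega_f)]^3\subset\bm{U}$, for every $q\in Q_0$ there is $\bm{v}_q\in[H^1_0(\Omega_f)]^3$ with $-(\nabla\cdot\bm{v}_q,q)_{\Omega_f}=\norm{q}_{\Omega_f}^2$ and $\norm{\bm{v}_q}_{1,\Omega_f}\le C_1\norm{q}_{\Omega_f}$. Since $\bm{v}_q$ vanishes on $\Omega_p$, taking $\eta=0$ gives $b_{\delta t}((\bm{v}_q,0),(q,\lambda,c))=\norm{q}_{\Omega_f}^2$, with no coupling to $\lambda$ or $c$.

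\emph{Step 2 (the multiplier).} Let $\Lambda$ denote the trace space of $v_3$ on $\Omega_p$ for $\bm{v}\in\bm{U}$, so that $G=\Lambda'$. By definition of the dual norm there is $\psi_\lambda\in\Lambda$ with $\norm{\psi_\lambda}_\Lambda=\norm{\lambda}_{G}$ and $\dual{\psi_\lambda,\lambda}_{\Omega_p}=\norm{\lambda}_{G}^2$. Because $\Omega_p$ is flat and $\partial\Omega_f$ is smooth, a bounded right inverse of the trace gives $\bm{v}_\lambda=(0,0,v_{\lambda,3})\in\bm{U}$ with $v_{\lambda,3}|_{\Omega_p}=\psi_\lambda$ and $\norm{\bm{v}_\lambda}_{1,\Omega_f}\le C_2\norm{\lambda}_{G}$. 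With $\eta=0$ this yields
\begin{equation*}
b_{\delta t}((\bm{v}_\lambda,0),(q,\lambda,c))=\norm{\lambda}_{G}^2-(\nabla\cdot\bm{v}_\lambda,q)_{\Omega_f}\ge\norm{\lambda}_{G}^2-\sqrt{3}\,C_2\norm{\lambda}_{G}\norm{q}_{\Omega_f}.
\end{equation*}

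\emph{Step 3 (the constant).} Fix a bump function $\eta_0\in W$ with $\int_{\Omega_p}\eta_0=1$, and take $\bm{v}=0$, $\eta=-c\,\eta_0$. This gives
\begin{equation*}
b_{\delta t}((0,-c\eta_0),(q,\lambda,c))=c^2+c\dual{\eta_0,\lambda}_{\Omega_p}\ge c^2-C_3\abs{c}\norm{\lambda}_{G}.
\end{equation*}
This uses $\eta_0|_{\Omega_p}\in\Lambda$, which holds because $\eta_0$ can be chosen smooth with compact support in $\Omega_p$.

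\emph{Step 4 (combination).} Set $(\bm{v},\eta)=(\bm{v}_q+\epsilon_1\bm{v}_\lambda,\,-\epsilon_2c\,\eta_0)$ and sum Steps 1--3:
\begin{equation*}
b_{\delta t}\ge\norm{q}_{\Omega_f}^2+\epsilon_1\norm{\lambda}_{G}^2+\epsilon_2c^2-\epsilon_1\sqrt{3}\,C_2\norm{\lambda}_{G}\norm{q}_{\Omega_f}-\epsilon_2C_3\abs{c}\norm{\lambda}_{G}.
\end{equation*}
By Young's inequality, $\epsilon_1\sqrt{3}C_2\norm{\lambda}_G\norm{q}\le\tfrac12\norm{q}^2+\tfrac32\epsilon_1^2C_2^2\norm{\lambda}_G^2$ and $\epsilon_2C_3\abs{c}\norm{\lambda}_G\le\tfrac12\epsilon_2c^2+\tfrac12\epsilon_2C_3^2\norm{\lambda}_G^2$. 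Choose $\epsilon_1$ small so that $\tfrac32\epsilon_1^2C_2^2\le\tfrac14\epsilon_1$, then $\epsilon_2$ small so that $\tfrac12\epsilon_2C_3^2\le\tfrac14\epsilon_1$. The right-hand side is then at least $\tfrac12\norm{q}^2+\tfrac12\epsilon_1\norm{\lambda}_G^2+\tfrac12\epsilon_2c^2$, which is bounded below by a constant times $(\norm{q}_{\Omega_f}+\norm{\lambda}_{G}+\abs{c})^2$. The denominator satisfies $\norm{\bm{v}}_{1,\Omega_f}+\norm{\eta}_{1,\Omega_p}\le C(\norm{q}_{\Omega_f}+\norm{\lambda}_{G}+\abs{c})$, so dividing gives the claim with $\beta$ independent of $\delta t$.

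The main obstacle is Step 2. One must identify precisely the trace space $\Lambda$ of $v_3$ on $\Omega_p$ for $\bm{v}\in\bm{U}$, where $\bm{v}=0$ on $S$ forces vanishing on $\partial\Omega_p$, so $\Lambda$ is essentially $H^{1/2}_{00}(\Omega_p)$. One then needs a bounded extension into $\bm{U}$ that keeps $v_1=v_2=0$ on $\Omega_p$. Taking only the third component nonzero handles the latter, so I would first try the standard $H^{1/2}_{00}$ lifting applied to $v_3$ alone. By contrast, trying to control $\lambda$ through $\eta\in W$ alone would fail, since it only gives an $H^{-1}$ norm. This is exactly why the fluid extension must carry $\lambda$ and the plate test function is reserved for $c$.
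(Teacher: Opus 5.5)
Your argument is correct. Its skeleton matches the paper's: a velocity from the Stokes divergence problem for $q$, a velocity whose third trace on $\Omega_p$ is the Riesz representer of $\lambda$, a plate function for $c$, and a weighted sum. Two of the three pieces, however, are built differently.

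\emph{The $\lambda$ piece.} The paper solves a divergence problem for $\tilde{\bm{u}}_1$ with the prescribed trace and \emph{constant} divergence. Then $(\nabla\cdot\tilde{\bm{u}}_1,q)_{\Omega_f}=0$ for all $q\in Q_0$, so no $q$--$\lambda$ coupling appears. You use a bare lifting into the third component and absorb $(\nabla\cdot\bm{v}_\lambda,q)_{\Omega_f}$ with the small weight $\epsilon_1$. This needs nothing beyond the trace theorem, whereas the paper also needs a nonhomogeneous divergence problem.

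\emph{The $c$ piece.} The paper takes $\tilde w=\tilde w_1+\tilde w_2$ from two Poisson solves on $\Omega_p$ with loads $-\lambda$ and $-c$, so the plate part of $b_{\delta t}$ equals $\norm{\nabla\tilde w}_{\Omega_p}^2$. Extracting $\abs{c}^2$ from this square leaves a negative multiple of $\norm{\nabla\tilde w_1}_{\Omega_p}^2$. That term is bounded by a multiple of the squared dual norm of $\lambda$, and the paper beats it by choosing the fluid weight $\epsilon$ \emph{large}. Your single unit-mean bump $\eta_0$ does the same job with no plate solves. The $\lambda$-loaded piece $\tilde w_1$ is in fact superfluous, consistent with your remark that $\eta$ only sees an $H^{-1}$ norm of $\lambda$. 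Every coupling in your version is handled by Young's inequality with small weights, so your constants are explicit.

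Your version is also more careful at the delicate point. You identify the trace space of $v_3$ as $H^{1/2}_{00}(\Omega_p)$ and take $G$ as its dual. The paper instead picks the representer in $H^{1/2}(\Omega_p)$ and imposes it as boundary data together with $\bm{0}$ on $S$, which is only admissible for data in $H^{1/2}_{00}(\Omega_p)$.

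Your explicit restriction $q\in Q_0$ is genuinely needed: $(q,\lambda,c)=(1,1,-1)$ makes $b_{\delta t}$ vanish for every $(\bm{v},\eta)$.
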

\begin{proof}
    Let $(\tilde{q}, \tilde{\lambda}, \tilde{c})\in Q_0\times G \times \mathbb{R}$ be given. We choose a $\tilde{w} = \tilde{w}_1 + \tilde{w}_2\in W$ such that 
    \begin{equation} \label{eq:infsup_poisson_1}
        \begin{aligned}
            (\nabla \tilde{w}_1, \nabla\varphi_1)_{\Omega_p} = -(\tilde{\lambda}, \varphi_1)_{\Omega_p} \quad \forall \varphi_1\in W,\\
            (\nabla \tilde{w}_2, \nabla\varphi_2)_{\Omega_p} = -(\tilde{c}, \varphi_2)_{\Omega_p} \quad \forall \varphi_2\in W.
        \end{aligned}
    \end{equation}
    Note that by setting $\varphi_1 = \tilde{w}_2, \varphi_2 = \tilde{w}_2$ and using Poincar\'{e}-Friedrich inequality, this implies that
    \begin{equation} \label{eq:infsup_poisson_2}
        \begin{aligned}
            \norm{\nabla\tilde{w}_1}_{\Omega_p}^2 = -(\tilde{\lambda}, \tilde{w}_1)_{\Omega_p} &\text{ and } \norm{\nabla\tilde{w}_1}_{\Omega_p} \leq D_1 \norm{\tilde{\lambda}}_{H^{-1/2}(\Omega_p)},\\
            \norm{\nabla\tilde{w}_2}_{\Omega_p}^2 = -(\tilde{c}, \tilde{w}_2)_{\Omega_p} &\text{ and } \norm{\nabla\tilde{w}_2}_{\Omega_p} \leq D_2 \abs{\tilde{c}}.
        \end{aligned}
    \end{equation}
    for some $D_1,D_2>0$. This also means that
    \begin{equation} \label{eq:infsup_poisson_4}
        \left(\nabla (\tilde{w}_1 + \tilde{w}_2), \nabla\varphi\right)_{\Omega_p} = -(\tilde{\lambda} + \tilde{c}, \varphi)_{\Omega_p} \quad \forall \varphi\in W.
    \end{equation}
    From \eqref{eq:infsup_poisson_1}-\eqref{eq:infsup_poisson_2}, we obtain
    \begin{equation}
        \norm{\nabla\tilde{w}_2}^{2}_{\Omega_p} = \abs{\tilde{c}}^2\norm{\nabla\tilde{\phi}}_{\Omega_p}^2
    \end{equation}
    where $\tilde{\phi}\in W$ is the solution to 
    \begin{equation}
        (\nabla\tilde{\phi}, \nabla\varphi)_{\Omega_p} = (1, \varphi)_{\Omega_p}, \quad \forall\varphi\in W.
    \end{equation}
    Hence, by \eqref{eq:infsup_poisson_2} and Poincar\'{e}-Friedrich inequality, we have 
    \begin{equation} \label{eq:infsup_poisson_3}
        \norm{\tilde{w}}_{1,\Omega_p}\leq C_0 \left(\norm{\tilde{\lambda}}_{H^{-1/2}(\Omega_p)} + \abs{\tilde{c}}\right)
    \end{equation}
    for some $C_0>0$. Now, following the strategy in \cite{Avalos2025}, we choose $\tilde{\bm{u}}_0\in \bm{U}$ be the solution to the divergence problem
    \begin{equation} \label{eq:u0_div_prob}
        \begin{cases}
            \nabla\cdot\tilde{\bm{u}}_0 = -\tilde{q} & \text{ in }\Omega_f,\\
            \tilde{\bm{u}}_0 = \bm{0} & \text{ on }\partial\Omega_f.
        \end{cases}
    \end{equation}
    Note that with this, we have
    \begin{equation} \label{eq:u0_bound}
        \norm{\nabla\tilde{\bm{u}}_0}_{\Omega_f}\leq C_1
        \norm{\tilde{q}}_{\Omega_f}.
    \end{equation}
    By the Riesz Representation Theorem, there exists a unique $\xi\in H^{1/2}(\Omega_p)$ such that for any $\theta\in H^{1/2}(\Omega_p)$, 
    \begin{equation}
        \dual{\tilde{\lambda}, \theta}_{\Omega_p} = \dual{\xi, \theta}_{1/2,\Omega_p} \text{ with }\norm{\tilde{\lambda}}_{-1/2,\Omega_p} = \norm{\tilde{\xi}}_{1/2,(\Omega_p)}.
    \end{equation}
     Setting $\tilde{\xi} = \xi/\norm{\xi}_{1/2,\Omega_p}$ gives
    \begin{equation} \label{eq:xi_norm}
        \dual{\tilde{\lambda}, \tilde{\xi}}_{\Omega_p} = \dual{\xi, \frac{\xi}{\norm{\xi}_{H^{1/2}(\Omega_p)}}}_{1/2,\Omega_p} = \norm{\xi}_{1/2,\Omega_p} = \norm{\tilde{\lambda}}_{-1/2,(\Omega_p)}.
    \end{equation}
    Note that $\xi/\norm{\xi}_{1/2,\Omega_p}=1$. 
    Now, we choose $\tilde{\bm{u}}_1\in\bm{U}$ to be the solution to the problem
    \begin{equation} \label{eq:u1_div_prob}
        \begin{cases}
            \nabla\cdot\tilde{\bm{u}}_1 = \frac{\epsilon}{\abs{\Omega_f}}\norm{\tilde{\lambda}}_{-1/2,\Omega_p}\int_{\Omega_p} \tilde{\xi}\,d\Omega_p & \text{ in }\Omega_f,\\
            \tilde{\bm{u}}_0 = \begin{cases}
                \bm{0} & \text{ on } S,\\
                \left[0, 0,\epsilon\norm{\tilde{\lambda}}_{-1/2,\Omega_p}\tilde{\xi}\right] & \text{ on }\Omega_p.
            \end{cases}
        \end{cases}
    \end{equation}
    for some $\epsilon>0$, where $\abs{\Omega_f}$ is the volume of $\Omega_f$.  Note that $( \nabla\cdot\tilde{\bm{u}}_1, q)_{\Omega_f} = 0 \; \forall q \in Q_0$.
      We also obtain from \eqref{eq:u1_div_prob} that 
    \begin{equation} \label{eq:u1_bound}
        \norm{\nabla\tilde{\bm{u}}_1}_{\Omega_f}\leq C_2\norm{\tilde{\lambda}}_{-1/2,\Omega_p}
    \end{equation}
    as $\norm{\tilde{\xi}}_{L^1(\Omega_p)} \le \norm{\tilde{\xi}}_{1/2,\Omega_p}=1$.
    Now, we set $\tilde{\bm{u}} = \tilde{\bm{u}}_0 + \tilde{\bm{u}}_1$ and this implies
    \begin{equation} \label{eq:u_h1_norm}
        \norm{\tilde{\bm{u}}}_{1,\Omega_f} \leq C \left(\norm{\tilde{q}}_{\Omega_f} + \norm{\tilde{\lambda}}_{-1/2,\Omega_p}\right).
    \end{equation}
    by \eqref{eq:u0_bound} and \eqref{eq:u1_bound}
    We also note that by \eqref{eq:xi_norm}-\eqref{eq:u1_div_prob}, we have
    \begin{equation}\label{eq:vel_LM_rel}
        \dual{\tilde{\lambda}, \tilde{u}_3}_{\Omega_p} = \dual{\tilde{\lambda}, (\tilde{\bm{u}}_1)_3}_{\Omega_p} = \epsilon\norm{\tilde{\lambda}}_{-1/2,\Omega_p}\dual{\tilde{\lambda}, \tilde{\xi}}_{\Omega_p} = \epsilon\norm{\tilde{\lambda}}_{-1/2,\Omega_p}^2
    \end{equation}
    Then, by \eqref{eq:u0_div_prob}, \eqref{eq:vel_LM_rel}, \eqref{eq:infsup_poisson_2}-\eqref{eq:infsup_poisson_3}, we obtain
    \begin{equation}
        \begin{aligned}\label{eq:infsup1}
            b_{\delta t}\left((\tilde{\bm{u}}, \tilde{w}), (\tilde{q}, \tilde{\lambda}, \tilde{c})\right) & = - (\nabla \cdot \tilde{\bm{u}}, \tilde{q})_{\Omega_f} + \dual{\tilde{u}_3, \tilde{\lambda}}_{\Omega_p} - \dual{\tilde{\lambda}, \tilde{w}}_{\Omega_p} - (\tilde{c}, \tilde{w})_{\Omega_p} \\
            &= \norm{\tilde{q}}_{\Omega_f}^2 + \epsilon\norm{\tilde{\lambda}}_{-1/2,\Omega_p}^2 + \norm{\nabla\tilde{w}_1+\nabla\tilde{w}_2}_{\Omega_p}^2\\
            & \geq \norm{\tilde{q}}_{\Omega_f}^2 + \epsilon\norm{\tilde{\lambda}}_{-1/2,\Omega_p}^2 + \norm{\nabla\tilde{w}_1}^2_{\Omega_p} + 2(\nabla\tilde{w}_1, \nabla\tilde{w}_2)_{\Omega_p} + \norm{\nabla\tilde{w}_2}^2_{\Omega_p}\\
            & \geq \norm{\tilde{q}}_{\Omega_f}^2 + \epsilon\norm{\tilde{\lambda}}_{-1/2,\Omega_p}^2 - \left(\frac{1}{\epsilon_1}-1\right)\norm{\nabla\tilde{w}_1}^2_{\Omega_p} + \left(1-\epsilon_1\right)\norm{\nabla\tilde{w}_2}^2_{\Omega_p}\\
            & \geq \norm{\tilde{q}}_{\Omega_f}^2 + \left(\epsilon - D_1^2\left(\frac{1}{\epsilon_1}-1\right)\right)\norm{\tilde{\lambda}}_{-1/2,\Omega_p}^2 + \left(1-\epsilon_1\right)\norm{\nabla\tilde{w}_2}^2_{\Omega_p}\\
            & \geq \norm{\tilde{q}}_{\Omega_f}^2 + \left(\epsilon - D_1^2\left(\frac{1}{\epsilon_1}-1\right)\right)\norm{\tilde{\lambda}}_{-1/2,\Omega_p}^2 + C_r \abs{\tilde{c}}^2\\
            & \geq \tilde{D}\left(\norm{\tilde{q}}_{\Omega_f}^2 + \norm{\tilde{\lambda}}_{-1/2,\Omega_p}^2 + \abs{\tilde{c}}^2\right)
        \end{aligned}
    \end{equation}
    where we choose $\epsilon > D_1^2\left(\frac{1}{\epsilon_1}-1\right)$, $\tilde{D} = \min\left\{1, \epsilon - D_1^2\left(\frac{1}{\epsilon_1}-1\right), C_r\right\}$,  $C_r = (1-\epsilon_1)\norm{\nabla\tilde{\phi}}_{\Omega_p}^2$. By \eqref{eq:infsup_poisson_3} and \eqref{eq:u_h1_norm},  we have
    \begin{equation}\label{eq:h1norm_bound}
        \norm{\tilde{\bm{u}}}_{1,\Omega_f} + \norm{\tilde{w}}_{1,\Omega_p}\leq \tilde{C}\left(\norm{\tilde{q}}_{\Omega_f} + \norm{\tilde{\lambda}}_{-1/2,\Omega_p} + \abs{c}\right).
    \end{equation}
    Using the identity $a^2 + b^2 + b^2 \geq (1/3)(a+b +c)^2$, \eqref{eq:infsup1}, and \eqref{eq:h1norm_bound}, we obtain
    \begin{equation}
        \begin{aligned}
            b_{\delta t}\left((\tilde{\bm{u}}, \tilde{w}), (\tilde{q}, \tilde{\lambda})\right) & \geq \frac{\tilde{D}}{3}\left(\norm{\tilde{q}}_{\Omega_f} + \norm{\tilde{\lambda}}_{-1/2,\Omega_p} + \abs{\tilde{c}}\right)^2 \\
            & \geq \frac{\tilde{D}}{3\tilde{C}}\left(\norm{\tilde{q}}_{\Omega_f} + \norm{\tilde{\lambda}}_{-1/2,\Omega_p} + \abs{\tilde{c}}\right)\left(\norm{\tilde{\bm{u}}}_{1,\Omega_f} + \norm{\tilde{w}}_{1,\Omega_p}\right) \\
        \end{aligned}
    \end{equation}
    Hence,
    \begin{equation*}
        \frac{b_{\delta t}\left((\tilde{\bm{u}}, \tilde{w}), (\tilde{q}, \tilde{\lambda})\right)}{\norm{\tilde{\bm{u}}}_{1,\Omega_f} + \norm{\tilde{w}}_{1,\Omega_p}} \geq \beta \left(\norm{\tilde{q}}_{\Omega_f} + \norm{\tilde{\lambda}}_{-1/2,\Omega_p} + \abs{c}\right).
    \end{equation*}
    Since $(\tilde{q}, \tilde{\lambda}, \tilde{c})\in Q_0\times G \times \mathbb{R}$ is arbitrary, taking the supremum over $\bm{0}\neq(\bm{v}, \eta)\in \bm{U}\times W$, shows the inf-sup condition for $b_{\delta t}(\cdot, \cdot)$.
\end{proof}

%{\color{red} Add a theorem for the well-posedness of (26).}

From \eqref{eq:semi_disc_coercive}, Theorem \ref{thm:semi-disc-infsup}, and the theory for the existence and uniqueness of solutions to saddle point systems, we present the well-posedness of the semi-discrete problem \eqref{eq:momentum_disc_weak}-\eqref{eq:interface_disc_weak}.

\begin{theorem}
    The semi-discrete problem \eqref{eq:saddle_point_prob} has a unique solution $\left((\bm{u}^{n+1}, w^{n+1}, z^{n+1}), (p^{n+1}_0, g^{n+1}, s^{n+1})\right)\in \left(\bm{U}\times W\times W\right)\times \left(Q_0\times G\times \mathbb{R}\right)$.
\end{theorem}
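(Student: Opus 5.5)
We will deduce the result from the abstract Babu\v{s}ka--Brezzi theory for saddle point problems. Set $X=\bm{U}\times W\times W$ with the norm $\norm{\cdot}_X$ introduced above, and $M = Q_0\times G\times\mathbb{R}$ with norm $\norm{q}_{\Omega_f}+\norm{\lambda}_{G}+\abs{c}$. The unknowns in \eqref{eq:saddle_point_prob} are $x=(\bm{u}^{n+1},w^{n+1},z^{n+1})\in X$ and the scaled multiplier $m=(\delta t\,p_0^{n+1},\delta t\,g^{n+1},\delta t\,s^{n+1})\in M$. Since $\delta t>0$ is fixed, the map between $m$ and $(p_0^{n+1},g^{n+1},s^{n+1})$ is a bijection. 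The second equation involves $(\bm{u}^{n+1},\frac{1}{\delta t}w^{n+1})$, which does not match the first; we remove this by rescaling the $W$-component. Testing the $\lambda,c$ parts with $\delta t\lambda,\delta t c$ (or equivalently using $\tilde w=w/\delta t$ as unknown) makes the constraint operator and its transpose the same form $b_{\delta t}$. This rescaling only changes constants by powers of $\delta t$.

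The steps are then as follows.

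First, we verify that all forms are bounded. Continuity of $a_{\delta t}$ on $X\times X$ follows from Cauchy--Schwarz, with a constant depending on $\rho_f,\nu_f,\rho,\delta t$. Continuity of $b_{\delta t}$ on $(\bm{U}\times W)\times M$ follows from Cauchy--Schwarz, the trace theorem $\norm{v_3}_{H^{1/2}(\Omega_p)}\le C\norm{\bm{v}}_{1,\Omega_f}$, the embedding $H^1_0(\Omega_p)\hookrightarrow H^{1/2}(\Omega_p)$ for the pairing $\dual{\eta,\lambda}$, and $\abs{(c,\eta)}\le \abs{c}\,\abs{\Omega_p}^{1/2}\norm{\eta}_{\Omega_p}$. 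The right-hand sides $f^1_{\delta t}$ and $f^2_{\delta t}$ are bounded linear functionals because the data $\bm{f}^{n+1},\bm{u}^n,w^n,w^{n-1}$ are known from previous steps (and from the initial conditions). For the $\rho$ terms, $(\nabla w^n,\nabla\eta)$ is bounded since $w^n\in W$.

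Second, we check coercivity. By \eqref{eq:semi_disc_coercive}, $a_{\delta t}$ is coercive on all of $X$, hence in particular on the kernel of $b_{\delta t}$. One caveat needs care: the indefinite cross terms $\frac{1}{\delta t}(w,\eta)$ and $-\frac{1}{\delta t}(z,\varphi)$ do not cancel unless $\varphi=\eta$. If the display in \eqref{eq:semi_disc_coercive} is taken with the diagonal test function, the cross terms $\frac1{\delta t}(\varphi,\eta)-\frac1{\delta t}(\eta,\varphi)$ cancel exactly, and we rely on that. The positivity of $\alpha$ requires $\rho>0$. If $\rho=0$, we would instead obtain control of $\eta$ from $\delta t\norm{\nabla\eta}^2$ together with Poincar\'e.

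Third, we invoke the inf-sup condition of Lemma~\ref{thm:semi-disc-infsup}. Combined with continuity and coercivity, Brezzi's theorem yields a unique $(x,m)\in X\times M$, together with an a priori bound in terms of $f^1_{\delta t}$ and $f^2_{\delta t}$. Undoing the scaling gives the unique $(p_0^{n+1},g^{n+1},s^{n+1})$. The main obstacle is the mismatch between the two equations: the forms are not symmetric, and the velocity-to-$w$ factor $1/\delta t$ differs between them. We therefore expect to state the result via the generalized (nonsymmetric) Babu\v{s}ka--Brezzi theorem, or Ne\v{c}as' theorem applied to the whole block operator. In that version one needs inf-sup conditions for both $b$ forms; these are identical up to the $\delta t$ scaling of $\eta$, so Lemma~\ref{thm:semi-disc-infsup} covers both with $\beta$ replaced by $\beta\min\{1,\delta t\}$. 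For the first step (where $w^{n-1}$ is needed), we use $w^{-1}$ defined from $w_0$ and $w_{t0}$ via $w^{-1}=w_0-\delta t\,w_{t0}$.
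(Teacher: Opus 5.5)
You take the same route as the paper: coercivity \eqref{eq:semi_disc_coercive}, Lemma~\ref{thm:semi-disc-infsup}, and saddle-point theory. You are right that \eqref{eq:saddle_point_prob} is not in standard Brezzi form, but your repair does not fix this, and the step ``coercive on all of $X$, hence on the kernel'' fails. The paper's one-line justification rests on the same unverified step.

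The mismatch is not just the factor $1/\delta t$. On the trial side the constraint acts on $(\bm{u},w)$, the first and \emph{second} components of $(\bm{u},w,z)$. On the test side it acts on $(\bm{v},\eta)$, the first and \emph{third} components of $(\bm{v},\varphi,\eta)$. No rescaling of $w$ or of $(\lambda,c)$ changes which slot is constrained, so the two constraint forms are not ``identical up to scaling''.

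In the nonsymmetric (Nicolaides/Ne\v{c}as-type) theorem the two kernels are therefore different:
$K_1=\{(\bm{u},w,z):\nabla\cdot\bm{u}=0,\ u_3=w/\delta t\text{ on }\Omega_p,\ \int_{\Omega_p}w=0\}$ with $z$ free, and
$K_2=\{(\bm{v},\varphi,\eta):\nabla\cdot\bm{v}=0,\ v_3=\eta\text{ on }\Omega_p,\ \int_{\Omega_p}\eta=0\}$ with $\varphi$ free.
What is required is $\inf_{x\in K_1}\sup_{y\in K_2}a_{\delta t}(x,y)/(\norm{x}_X\norm{y}_X)>0$, together with nondegeneracy in the other argument. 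The diagonal bound $a_{\delta t}(x,x)\ge\alpha\norm{x}_X^2$ gives neither, because $x\in K_1$ lies in $K_2$ only if $u_3=z$ on $\Omega_p$. Realigning the slots by reordering the test triple as $(\bm{v},\eta,\varphi)$ does not help either. The diagonal of $a_{\delta t}$ then contains $-\frac{1}{\delta t}\norm{z}_{\Omega_p}^2$ and uncancelled cross terms, so coercivity is lost.

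What is missing is an actual kernel argument. For $x=(\bm{u},w,z)\in K_1$, the energy-type choice $y=(\bm{u},\,-\delta t\,z+\frac{\rho}{\delta t}w,\,\frac{1}{\delta t}w)\in K_2$ gives
\[
a_{\delta t}(x,y)=\rho_f\norm{\bm{u}}_{\Omega_f}^2+\nu_f\delta t\norm{\nabla\bm{u}}_{\Omega_f}^2+\frac{1}{(\delta t)^2}\norm{w}_{\Omega_p}^2+\frac{\rho}{(\delta t)^2}\norm{\nabla w}_{\Omega_p}^2+\norm{z}_{\Omega_p}^2 .
\]
This already yields uniqueness, but it controls $z$ only in $L^2$. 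For the inf-sup in the $X$-norm you must also:
\begin{itemize}
\item add to $\eta$ a small multiple of a mean-zero modification $\hat z$ of $z$, and add a divergence-free extension of $\hat z$ to $\bm{v}$ so that $y$ stays in $K_2$;
\item add a small multiple of $w$ to $\varphi$ when $\rho=0$;
\item absorb the resulting cross terms by Young's inequality.
\end{itemize}
The transposed condition follows from $x=(\bm{v},\delta t\,\eta,-\frac{1}{\delta t}\varphi+\frac{\rho}{\delta t}\eta)\in K_1$, for which
\[
a_{\delta t}(x,y)=\rho_f\norm{\bm{v}}_{\Omega_f}^2+\nu_f\delta t\norm{\nabla\bm{v}}_{\Omega_f}^2+\norm{\eta}_{\Omega_p}^2+\frac{1}{(\delta t)^2}\norm{\varphi-\rho\eta}_{\Omega_p}^2+\rho\norm{\nabla\eta}_{\Omega_p}^2 .
\]
With these two kernel conditions, and Lemma~\ref{thm:semi-disc-infsup} applied to both constraint forms, the generalized Brezzi theorem does give existence and uniqueness.
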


Next, we show the stability of \eqref{eq:momentum_disc_weak}-\eqref{eq:interface_disc_weak} in the following theorem.
\begin{theorem}
    Suppose $\bm{f}\in [L^2(\Omega_f)]^3$. Then, we have the following estimates for \eqref{eq:momentum_disc_weak}-\eqref{eq:interface_disc_weak}, 
    \begin{equation} \label{eq:energy_est}
        \begin{aligned}
            \frac{1}{2}&\norm{\bm{u}^N}_{\Omega_f}^2 + \frac{\nu_f\delta t}{2}\sum_{n=0}^{N-1}\norm{\nabla\bm{u}^{n+1}}_{\Omega_f}^2 + \frac{1}{2}\norm{\dot{w}^N}^2_{\Omega_p} + \frac{\rho}{2}\norm{\nabla\dot{w}^N}^2_{\Omega_p} + \frac{1}{2}\norm{z^N}_{\Omega_p}^2 + \sum_{n=0}^{N-1}\norm{g^{n+1}}_{-1/2,\Omega_p}^2 \\
            & + \frac{(\delta t)^2}{2}\sum_{n=0}^{N-1}\left(\norm{\dot{\bm{u}}^{n+1}}_{\Omega_f}^2 + \norm{\ddot{w}^{n+1}}_{\Omega_p}^2 + \rho\norm{\nabla\ddot{w}^{n+1}}_{\Omega_p}^2\right) \leq \left(\frac{(\delta t)^2C^2}{2\nu_f} + C_1\right)\sum_{n=0}^{N-1}\norm{\bm{f}^{n+1}}_{\Omega_f}^2,
        \end{aligned}
    \end{equation}
    \begin{align}
        &\norm{\nabla w^{n+1}}_{\Omega_p}^2\leq C_P^2 \norm{z^{n+1}}^2_{\Omega_p}, \label{eq:plate_estimate}
        %&\norm{\tilde{p}^{n+1}}_{\Omega_f}^2\leq C_\beta\left(\norm{\tilde{\bm{f}}^{n+1}}_{\Omega_f}^2 + \norm{\tilde{g}^{n+1}}_{H^{-1/2}(\Omega_p)}\right) \label{eq:pressure_estimate}
    \end{align}
    for some constants $C, C_1, C_P > 0$ independent of the time step $\delta t$.
\end{theorem}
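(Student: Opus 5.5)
Our plan is the standard discrete energy method: test each equation of \eqref{eq:momentum_disc_weak}--\eqref{eq:interface_disc_weak} with the natural discrete velocity so that the coupling terms cancel. Then sum over time, and recover the Lagrange multiplier bound from the inf-sup condition of Lemma \ref{thm:semi-disc-infsup}.

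\textbf{Step 1 (testing).} In \eqref{eq:momentum_disc_weak} take $\bm{v}=\bm{u}^{n+1}$ and in \eqref{eq:continuity_disc_weak} take $q=p_0^{n+1}$; the pressure term then drops out. Dividing \eqref{eq:plate_disc_weak} by $\delta t$ shows it is the equation $(\ddot w^{n+1},\eta)+\rho(\ddot z^{n+1},\eta)+(\nabla z^{n+1},\nabla\eta)-\langle g^{n+1},\eta\rangle-(s^{n+1},\eta)=0$. Take $\eta=\dot w^{n+1}$ there. The interface relation \eqref{eq:interface_disc_weak}, in its modified form with $(\lambda,c)=(g^{n+1},s^{n+1})$, gives $\langle u_3^{n+1}-\dot w^{n+1},g^{n+1}\rangle=0$ and $(s^{n+1},\dot w^{n+1})=0$. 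Adding the tested momentum and plate equations, the terms $\delta t\langle g^{n+1},u_3^{n+1}\rangle$ and $-\delta t\langle g^{n+1},\dot w^{n+1}\rangle$ cancel, and the $s$ term vanishes.

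\textbf{Step 2 (discrete time identities).} Use $2(a-b,a)=\|a\|^2-\|b\|^2+\|a-b\|^2$. For the fluid this gives $\tfrac{\rho_f}{2}(\|\bm{u}^{n+1}\|^2-\|\bm{u}^n\|^2+\delta t^2\|\dot{\bm u}^{n+1}\|^2)$, and the same identity applied to $\dot w$ gives the $\|\dot w\|$ and $\delta t^2\|\ddot w\|$ terms. For the rotational and stiffness terms, use \eqref{eq:poisson_semi_disc_weak}. First, $\rho(\ddot z^{n+1},\dot w^{n+1})=\rho(\nabla \ddot w^{n+1},\nabla\dot w^{n+1})$, since $\dot w^{n+1}\in W$ and the relation is linear in time. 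This yields the $\rho\|\nabla\dot w\|$ and $\rho\delta t^2\|\nabla\ddot w\|$ terms. Second, $(\nabla z^{n+1},\nabla\dot w^{n+1})$ should be converted into $(z^{n+1},\dot z^{n+1})$, which gives $\tfrac12\|z\|^2$ telescoping.

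\textbf{Main obstacle.} This conversion requires testing \eqref{eq:poisson_semi_disc_weak} with $\varphi=\dot z^{n+1}$ and using symmetry, i.e. $(\nabla w,\nabla\varphi)=(z,\varphi)$ together with the converse identity $(\nabla z,\nabla \dot w)=(z,\dot z)$. This holds formally since $z=-\Delta w$ with $z\in W$. I would justify it by noting $w^{n+1}$ solves a Poisson problem with $L^2$ data, so the identity follows by density or elliptic regularity on $\Omega_p$.

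\textbf{Step 3 (summation and estimates).} Sum from $n=0$ to $N-1$, assuming zero initial data as the right-hand side suggests. Bound the forcing term by $\delta t(\bm f^{n+1},\bm u^{n+1})\le \tfrac{\delta t\nu_f}{2}\|\nabla \bm u^{n+1}\|^2+\tfrac{\delta t C^2}{2\nu_f}\|\bm f^{n+1}\|^2$ via Poincaré, and absorb the first term. For the multiplier, use Lemma \ref{thm:semi-disc-infsup} with $\bm v\in\bm U$, $\eta\in W$ and the momentum and plate equations to write $b(( \bm v,\eta),(p_0,g,s))$ in terms of $\dot{\bm u}$, $\nabla\bm u$, $\bm f$, $\ddot w$, $\nabla\ddot w$ and $\nabla z$. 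This gives $\|g^{n+1}\|_{-1/2}^2\lesssim \|\bm f^{n+1}\|^2+\|\dot{\bm u}^{n+1}\|^2+\|\nabla \bm u^{n+1}\|^2+\dots$. Summing these bounds, with the already-controlled quantities absorbed, yields the $C_1\sum\|\bm f^{n+1}\|^2$ contribution. I expect this step may only close after rescaling by $\delta t$; if needed, I would multiply the $g$ bound by a small constant before adding it to the energy identity.

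\textbf{Step 4 (plate bound).} Finally, \eqref{eq:plate_estimate} follows from taking $\varphi=w^{n+1}$ in \eqref{eq:poisson_semi_disc_weak}: $\|\nabla w^{n+1}\|^2=(z^{n+1},w^{n+1})\le \|z^{n+1}\|C_P\|\nabla w^{n+1}\|$.
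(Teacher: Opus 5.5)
Your Steps 1, 2 and 4 match the paper's proof. You use the same test functions $\bm u^{n+1}$, $p_0^{n+1}$, $\dot w^{n+1}$, the interface constraint to cancel the $g^{n+1}$ and $s^{n+1}$ terms, the twice-differenced relation \eqref{eq:poisson_semi_disc_weak} for the rotational term, and $\varphi=w^{n+1}$ for \eqref{eq:plate_estimate}.

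The ``main obstacle'' you flag is not one, and the test function you propose for it is the wrong one. Taking $\varphi=\dot z^{n+1}$ in \eqref{eq:poisson_semi_disc_weak} produces $(\nabla w^{n+1},\nabla\dot z^{n+1})_{\Omega_p}$, not $(\nabla z^{n+1},\nabla\dot w^{n+1})_{\Omega_p}$. Since $z^{n+1}\in W$ is itself admissible, test \eqref{eq:poisson_semi_disc_weak} at $t^{n+1}$ and $t^n$ with $\varphi=z^{n+1}$ and subtract. This gives $\delta t(\nabla z^{n+1},\nabla\dot w^{n+1})_{\Omega_p}=\norm{z^{n+1}}^2_{\Omega_p}-(z^n,z^{n+1})_{\Omega_p}$ at once, which is what the paper does; no density or elliptic regularity is needed.

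The genuine gap is Step 3, the bound $\sum_n\norm{g^{n+1}}^2_{-1/2,\Omega_p}\le C_1\sum_n\norm{\bm f^{n+1}}^2_{\Omega_f}$ with $C_1$ independent of $\delta t$. There are two problems.

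First, invoking Lemma~\ref{thm:semi-disc-infsup} with $\eta\neq 0$ brings in $(\nabla z^{n+1},\nabla\eta)_{\Omega_p}$. It also brings in $\ddot w^{n+1}$ terms that the energy controls only with a $(\delta t)^2$ weight. Worse, $\norm{\nabla z^{n+1}}_{\Omega_p}$ is not controlled by your energy identity at all. Instead, take $\eta=0$ and use a fluid-only inf-sup for $(p_0^{n+1},g^{n+1})$, as in the construction $\tilde{\bm u}_0+\tilde{\bm u}_1$ of the lemma's proof (the paper uses divergence-free $\bm v$ plus an extension of $\lambda$). This gives $\norm{g^{n+1}}_{-1/2,\Omega_p}\le C\bigl(\rho_f\norm{\dot{\bm u}^{n+1}}_{\Omega_f}+\nu_f\norm{\nabla\bm u^{n+1}}_{\Omega_f}+\norm{\bm f^{n+1}}_{\Omega_f}\bigr)$.

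Second, and decisively, the energy identity controls $\dot{\bm u}^{n+1}$ only through the numerical dissipation: $\rho_f(\delta t)^2\sum_n\norm{\dot{\bm u}^{n+1}}^2_{\Omega_f}\le\frac{\delta t\,C^2}{\nu_f}\sum_n\norm{\bm f^{n+1}}^2_{\Omega_f}$. Squaring and summing the $g$-bound therefore yields a constant of order $(\delta t)^{-1}$. Your fallback of multiplying the $g$-bound by a small constant does not rescue this. Absorbing $\norm{\dot{\bm u}^{n+1}}^2_{\Omega_f}$ requires a weight of order $(\delta t)^2$, exactly the $\alpha$ used in Section~\ref{sec:fully-disc}. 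With that weight you only get $(\delta t)^2\sum_n\norm{g^{n+1}}^2_{-1/2,\Omega_p}\lesssim\delta t\sum_n\norm{\bm f^{n+1}}^2_{\Omega_f}$. A $\delta t$-uniform $C_1$ needs an extra ingredient, such as a uniform bound on $\sum_n\norm{\dot{\bm u}^{n+1}}^2_{\Omega_f}$ from a second energy estimate (testing with $\dot{\bm u}^{n+1}$ and $\ddot w^{n+1}$). Your proposal does not provide one.

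Your suspicion is well-founded, though. The paper's proof gets a $\delta t$-independent $C_1$ only because \eqref{eq:energy_1} carries $\frac{(\delta t)^2C^2}{2\nu_f}$ on its right-hand side. Summing \eqref{eq:fluid_ineq_3} over $n$ actually gives $\frac{\delta t\,C^2}{2\nu_f}\sum_n\norm{\bm f^{n+1}}^2_{\Omega_f}$. With the correct factor, the paper's argument has the same $(\delta t)^{-1}$ growth in the multiplier term. The other parts of \eqref{eq:energy_est} are still bounded by a $\delta t$-independent multiple of $\sum_n\norm{\bm f^{n+1}}^2_{\Omega_f}$, since $\delta t\le T$.
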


\begin{proof}
    For simplicity, we assume that the systems starts from rest, i.e., $\bm{u} = \bm{0}$, and $w = w_{t0} z=z_{t0}= 0$. Setting $\bm{v} = \bm{u}^{n+1}$ and $q = p_0^{n+1}$ in \eqref{eq:momentum_disc_weak}-\eqref{eq:continuity_disc_weak} and using the modified polarization identity $(a-b, a)_{\gamma} = \frac{1}{2}(\norm{a}_\gamma^2-\norm{b}_\gamma^2+\norm{a-b}_\gamma^2)$, we have
    \begin{equation}\label{eq:fluid_ineq}
        \begin{aligned}
            \frac{\rho_f}{2}\left(\norm{\bm{u}^{n+1}}_{\Omega_f}^2 - \norm{\bm{u}^{n}}_{\Omega_f}^2\right) + \frac{\rho_f}{2}\norm{\bm{u}^{n+1}-\bm{u}^{n}}_{\Omega_f}^2 + \nu_f\delta t\norm{\nabla\bm{u}^{n+1}}^2_{\Omega_f} + \delta t\dual{g^{n+1}, u_3^{n+1}}_{\Omega_p}\\
            = \delta t(\bm{f}^{n+1}, \bm{u}^{n+1})_{\Omega_f}.
        \end{aligned}
    \end{equation}
    Using Cauchy-Schwarz inequality, Young's inequality, and Poinca\'{e}-Friedrich's inequality, we obtain
    \begin{equation}\label{eq:fluid_ineq_2}
        \begin{aligned}
            \frac{\rho_f}{2}\left(\norm{\bm{u}^{n+1}}_{\Omega_f}^2 - \norm{\bm{u}^{n}}_{\Omega_f}^2\right) + \frac{\rho_f}{2}\norm{\bm{u}^{n+1}-\bm{u}^{n}}_{\Omega_f}^2 &+ \nu_f\delta t\norm{\nabla\bm{u}^{n+1}}^2_{\Omega_f} + \delta t\dual{g^{n+1}, u_3^{n+1}}_{\Omega_p}
            \\
            &\leq \frac{\delta t\epsilon}{2}\norm{\bm{f}^{n+1}}_{\Omega_f}^2 + \frac{\delta t C^2}{2\epsilon}\norm{\nabla\bm{u}^{n+1}}_{\Omega_f}^2.
        \end{aligned}
    \end{equation}
    Setting $\epsilon=\frac{C^2}{\nu_f}$ and using the notation in \eqref{eq:back_euler}, we get
    \begin{equation}\label{eq:fluid_ineq_3}
        \begin{aligned}
            \frac{\rho_f}{2}\left(\norm{\bm{u}^{n+1}}_{\Omega_f}^2 - \norm{\bm{u}^{n}}_{\Omega_f}^2\right) + \frac{\rho_f(\delta t)^2}{2}\norm{\dot{\bm{u}}^{n+1}}_{\Omega_f}^2 + \frac{\nu_f\delta t}{2}\norm{\nabla\bm{u}^{n+1}}^2_{\Omega_f} + \delta t\dual{g^{n+1}, u_3^{n+1}}_{\Omega_p} \\
            \leq \frac{\delta tC^2}{2\nu_f}\norm{\bm{f}^{n+1}}_{\Omega_f}^2.
        \end{aligned}
    \end{equation}
    Note that using \eqref{eq:back_euler}, we can rewrite \eqref{eq:plate_disc_weak} as
    \begin{equation} \label{eq:plate_weak_1st}
        (\dot{w}^{n+1}-\dot{w}^{n}, \eta)_{\Omega_p} + \rho(\dot{z}^{n+1}-\dot{z}^n, \eta)_{\Omega_p} + \delta t(\nabla z^{n+1}, \nabla\eta)_{\Omega_p} - \delta t\dual{g^{n+1}, \eta}_{\Omega_p} - \delta t(s^{n+1}, \eta)_{\Omega_p} = 0.
    \end{equation}
   If we set $\varphi = \eta$ in \eqref{eq:poisson_semi_disc_weak}, and taking the difference between time $t = t^{n+1}$ and $t = t^n$, we obtain
    \begin{equation*}
        (z^{n+1} - z^{n}, \eta)_{\Omega_p} = (\nabla w^{n+1} - \nabla w^n, \nabla \eta)_{\Omega_p}.
    \end{equation*}
    Using \eqref{eq:back_euler} and taking the difference again between time $t = t^{n+1}$ and $t = t^n$, we get
    \begin{equation} \label{eq:rotation_switch}
       (\dot{z}^{n+1} - \dot{z}^n, \eta)_{\Omega_p} = (\nabla \dot{w}^{n+1} - \nabla \dot{w}^{n}, \nabla \eta)_{\Omega_p}
    \end{equation}
    Hence, plugging \eqref{eq:rotation_switch} into the rotation term in \eqref{eq:fluid_ineq_3}, we have
    \begin{equation} \label{eq:plate_weak_1st}
        (\dot{w}^{n+1}-\dot{w}^{n}, \eta)_{\Omega_p} + \rho(\nabla\dot{w}^{n+1}-\nabla\dot{w}^n, \nabla\eta)_{\Omega_p} + \delta t(\nabla z^{n+1}, \nabla\eta)_{\Omega_p} - \delta t\dual{g^{n+1}, \eta}_{\Omega_p} - \delta t(s^{n+1}, \eta)_{\Omega_p} = 0.
    \end{equation}
    We set $\varphi=\dot{w}^{n+1}$ and use the polarization identity again to obtain
    \begin{equation} \label{eq:plate_weak_2nd}
        \begin{aligned}
        \frac{1}{2}&\left(\norm{\dot{w}^{n+1}}^2_{\Omega_p}-\norm{\dot{w}^{n}}^2_{\Omega_p}\right) + \frac{1}{2}\norm{\dot{w}^{n+1}-\dot{w}^n}_{\Omega_p}^2 + \frac{\rho}{2}\left(\norm{\nabla\dot{w}^{n+1}}^2_{\Omega_p}-\norm{\nabla\dot{w}^{n}}^2_{\Omega_p}\right) \\
        &+ \frac{\rho}{2}\norm{\nabla\dot{w}^{n+1}-\nabla\dot{w}^n}_{\Omega_p}^2 + \delta t(\nabla z^{n+1}, \nabla\dot{w}^{n+1})_{\Omega_p} - \delta t\dual{g^{n+1}, \dot{w}^{n+1}}_{\Omega_p} - \delta t(s^{n+1}, \dot{w}^{n+1})_{\Omega_p} = 0 
        \end{aligned}
    \end{equation}
    Now, setting $\eta=z^{n+1}$ in \eqref{eq:poisson_semi_disc_weak} gives
    \begin{equation} \label{eq:poisson_n+1}
        \norm{z^{n+1}}_{\Omega_p}^2=(\nabla z^{n+1}, \nabla w^{n+1})_{\Omega_p}.
    \end{equation}
  Since \eqref{eq:poisson_semi_disc_weak} holds for all time, at $t = t^n$ with $\eta = z^{n+1}$, we get
    \begin{equation} \label{eq:poisson_n}
        (z^n, z^{n+1})_{\Omega_p} = (\nabla w^{n}, \nabla z^{n+1})_{\Omega_p}.
    \end{equation}
    Using \eqref{eq:back_euler} and \eqref{eq:poisson_n+1}-\eqref{eq:poisson_n}, we observe that
    \begin{equation} \label{eq:cross_term}
        \delta t(\nabla z^{n+1}, \nabla\dot{w}^{n+1})_{\Omega_p} = (\nabla z^{n+1}, \nabla w^{n+1})_{\Omega_p} - (\nabla z^{n+1}, \nabla w^{n})_{\Omega_p} = \norm{z^{n+1}}^2_{\Omega_p} - (z^n, z^{n+1})_{\Omega_p}.
    \end{equation}
    Hence, using \eqref{eq:back_euler} and \eqref{eq:cross_term}, and letting $\eta=\dot{w}^{n+1}$ we obtain from \eqref{eq:plate_weak_2nd} that 
    \begin{equation}
        \begin{aligned}
        \frac{1}{2}\left(\norm{\dot{w}^{n+1}}^2_{\Omega_p}-\norm{\dot{w}^{n}}^2_{\Omega_p}\right) &+ \frac{(\delta t)^2}{2}\norm{\ddot{w}^{n+1}}_{\Omega_p}^2 + \frac{\rho}{2}\left(\norm{\nabla\dot{w}^{n+1}}^2_{\Omega_p}-\norm{\nabla\dot{w}^{n}}^2_{\Omega_p}\right) + \frac{\rho(\delta t)^2}{2}\norm{\nabla\ddot{w}^{n+1}}_{\Omega_p}^2 \\
        & + \norm{z^{n+1}}_{\Omega_p}^2 - \delta t\dual{g^{n+1}, \dot{w}^{n+1}}_{\Omega_p} = (z^n, z^{n+1})_{\Omega_p} + \delta t(s^{n+1}, \dot{w}^{n+1})_{\Omega_p}.
        \end{aligned}
    \end{equation}
    By the Cauchy-Schwarz inequality, and Young's inequality, we have
    \begin{equation}
        \begin{aligned} \label{eq:plate_ineq_3}
        \frac{1}{2}\left(\norm{\dot{w}^{n+1}}^2_{\Omega_p}-\norm{\dot{w}^{n}}^2_{\Omega_p}\right) &+ \frac{(\delta t)^2}{2}\norm{\ddot{w}^{n+1}}_{\Omega_p}^2 + \frac{\rho}{2}\left(\norm{\nabla\dot{w}^{n+1}}^2_{\Omega_p}-\norm{\nabla\dot{w}^{n}}^2_{\Omega_p}\right) + \frac{\rho(\delta t)^2}{2}\norm{\nabla\ddot{w}^{n+1}}_{\Omega_p}^2 \\
        & + \frac{1}{2}\left(\norm{z^{n+1}}_{\Omega_p}^2 - \norm{z^{n}}_{\Omega_p}^2\right) - \delta t\dual{g^{n+1}, \dot{w}^{n+1}}_{-1/2,\Omega_p} \leq \delta t(s^{n+1}, \dot{w}^{n+1})_{\Omega_p}.
        \end{aligned}
    \end{equation}
    Note that setting $\lambda = g^{n+1}$ in \eqref{eq:interface_disc_weak} gives
    \begin{equation}
        \dual{\dot{w}^{n+1}, g^{n+1}}_{\Omega_p} = \dual{u^{n+1}_3, g^{n+1}}_{\Omega_p}.
    \end{equation}
    With this and adding \eqref{eq:fluid_ineq_3} and \eqref{eq:plate_ineq_3} result to
    \begin{equation*}
        \begin{aligned}
            \frac{\rho_f}{2}&\left(\norm{\bm{u}^{n+1}}_{\Omega_f}^2 - \norm{\bm{u}^{n}}_{\Omega_f}^2\right) + \frac{\rho_f(\delta t)^2}{2}\norm{\dot{\bm{u}}^{n+1}}_{\Omega_f}^2 + \frac{\nu_f\delta t}{2}\norm{\nabla\bm{u}^{n+1}}^2_{\Omega_f} \\
            & + \frac{1}{2}\left(\norm{\dot{w}^{n+1}}^2_{\Omega_p}-\norm{\dot{w}^{n}}^2_{\Omega_p}\right) + \frac{(\delta t)^2}{2}\norm{\ddot{w}^{n+1}}_{\Omega_p}^2 + \frac{\rho}{2}\left(\norm{\nabla\dot{w}^{n+1}}^2_{\Omega_p} -\norm{\nabla\dot{w}^{n}}^2_{\Omega_p}\right) \\
            & + \frac{\rho(\delta t)^2}{2}\norm{\nabla\ddot{w}^{n+1}}_{\Omega_p}^2 + \frac{1}{2}\left(\norm{z^{n+1}}_{\Omega_p}^2 - \norm{z^{n}}_{\Omega_p}^2\right) \leq \frac{\delta tC^2}{2\nu_f}\norm{\bm{f}^{n+1}}_{\Omega_f}^2 + \delta t(s^{n+1}, \dot{w}^{n+1})_{\Omega_p}.
        \end{aligned}
    \end{equation*}
    From \eqref{eq:wdot_zeromean} and the fact that $s^{n+1}\in\mathbb{R}$, we see that
    \begin{equation*}
        (s^{n+1}, \dot{w}^{n+1})_{\Omega_p} = \int_{\Omega_p}s^{n+1}\dot{w}^{n+1}\,d\Omega_p = s^{n+1}\int_{\Omega_p}\dot{w}^{n+1}\,d\Omega_p = 0.
    \end{equation*}
    Then, summing $n=0, \dots, N-1$, we obtain
    \begin{equation}\label{eq:energy_1}
        \begin{aligned}
            \frac{\rho_f}{2}&\norm{\bm{u}^N}_{\Omega_f}^2 + \frac{\nu_f\delta t}{2}\sum_{n=0}^{N-1}\norm{\nabla\bm{u}^{n+1}}_{\Omega_f}^2 + \frac{1}{2}\norm{\dot{w}^N}^2_{\Omega_p} + \frac{\rho}{2}\norm{\nabla\dot{w}^N}^2_{\Omega_p} + \frac{1}{2}\norm{z^N}_{\Omega_p}^2 \\
            & + \frac{(\delta t)^2}{2}\sum_{n=0}^{N-1}\left(\rho_f\norm{\dot{\bm{u}}^{n+1}}_{\Omega_f}^2 + \norm{\ddot{w}^{n+1}}_{\Omega_p}^2 + \rho\norm{\nabla\ddot{w}^{n+1}}_{\Omega_p}^2\right) \leq \frac{(\delta t)^2C^2}{2\nu_f}\sum_{n=0}^{N-1}\norm{\bm{f}^{n+1}}_{\Omega_f}^2.
        \end{aligned}
    \end{equation}
    Now, setting $\bm{v}\in \bm{V}$ and isolating the term with the Lagrange multiplier $g^{n+1}$ in \eqref{eq:momentum_disc_weak} give
    \begin{equation}
        \dual{g^{n+1}, v_3}_{\Omega_p} = \frac{\rho_f}{\delta t}(\bm{u}^{n+1} - \bm{u}^{n}, \bm{v})_{\Omega_f} + \nu_f(\nabla\bm{u}^{n+1}, \nabla\bm{v})_{\Omega_f} - (\bm{f}^{n+1}, v)_{\Omega_f}. \label{eq:g_weak}
    \end{equation}
    %Note that by the Poincar\'{e}-Friedrich's inequality, there exists a constant $C_0>0$ such that
    %\begin{equation}
    %    \abs{(\bm{u}^{n+1} - \bm{u}^n, \bm{v})_{\Omega_f}} \leq \norm{\bm{u}^{n+1} - \bm{u}^n}_{\Omega_f}\norm{\bm{v}}_{\Omega_f} \leq C_0\norm{\bm{u}^{n+1} - \bm{u}^n}_{\Omega_f}\norm{\bm{v}}_{H^{1}(\Omega_f)}
    %\end{equation}
    %for any $v\in\bm{U}$. %Dividing both sides by $\norm{\bm{v}}_{H^{1}(\Omega_f)}$ and taking the supremum over all $v\in\bm{U}$, we see that $\bm{u}^{n+1} - \bm{u}^n \in H^{-1}(\Omega_f)$. Using this, we know that
    Now, taking the absolute value on both sides of \eqref{eq:g_weak}, then using triangle inequality, Cauchy-Schwarz, and Poincar\'{e}-Friedrich's inequality, we obtain
    \begin{equation*}
        \abs{\dual{g^{n+1}, v_3}_{\Omega_p}} \leq \left(\frac{\rho_f}{\delta t}\norm{\bm{u}^{n+1} - \bm{u}^{n}}_{\Omega_f} + \nu_f\norm{\nabla\bm{u}^{n+1}}_{\Omega_f} + \norm{\bm{f}^{n+1}}_{\Omega_f}\right)\norm{\bm{v}}_{1,\Omega_f}.
    \end{equation*}
    %where $C_{v}>0$ is the Poincar\'{e} constant.
    From \eqref{eq:back_euler}, we have
    \begin{equation*}
        \abs{\dual{g^{n+1}, v_3}_{\Omega_p})} \leq \left(\rho_f\norm{\dot{\bm{u}}^{n+1}}_{\Omega_f} + \nu_f\norm{\nabla\bm{u}^{n+1}}_{\Omega_f} + \norm{\bm{f}^{n+1}}_{\Omega_f}\right)\norm{\bm{v}}_{1,\Omega_f}.
    \end{equation*}
    Using the extension operator $E:H^{1/2}(\Omega_p)\to H^1(\Omega_f)$ defined in \cite{Gagliardo1957, Schaftingen2025}, we restrict the choice to $\bm{v}\in \mathcal{R}(E)$, where $\mathcal{R}(E)$ is the range of $E$, $\lambda\in H^{1/2}(\Omega_p)$, i.e., we choose $\bm{v}\in \bm{V}$ such that $v_3 = \lambda$ on $\Omega_p$. Since $E$ is linear and continuous, we have $\norm{\bm{v}}_{1,\Omega_f}\leq C_E\norm{\lambda}_{1/2,\Omega_p}$. With this, we obtain
    \begin{equation*}
        \frac{\abs{\dual{g^{n+1}, \lambda}_{\Omega_p}}}{\norm{\lambda}_{1/2,\Omega_p}} \leq C_E\left(\frac{\rho_f}{\delta t}\norm{\dot{\bm{u}}^{n+1}}_{\Omega_f} + \nu_f\norm{\nabla\bm{u}^{n+1}}_{\Omega_f} + \norm{\bm{f}^{n+1}}_{\Omega_f}\right)
    \end{equation*}
    and taking supremum over all $\lambda\in H^{1/2}(\Omega_p)$ results to
    \begin{equation}
        \norm{g^{n+1}}_{-1/2,\Omega_p} \leq C_E\left(\frac{\rho_f}{\delta t}\norm{\dot{\bm{u}}^{n+1}}_{\Omega_f} + \nu_f\norm{\nabla\bm{u}^{n+1}}_{\Omega_f} + \norm{\bm{f}^{n+1}}_{\Omega_f}\right). \label{eq:g_bound1}
    \end{equation}
    Squaring both sides of \eqref{eq:g_bound1}, then using the identity $(a + b + c)^2 \leq 3(a^2 + b^2 + c^2)$, and summing from $n=0$ to $N-1$, we have
    \begin{equation}
        \norm{g^{n+1}}_{-1/2,\Omega_p}^2 \leq 3C_E^2\left(\rho_f^2\norm{\dot{\bm{u}}^{n+1}}_{\Omega_f}^2 + \nu_f^2\norm{\nabla\bm{u}^{n+1}}_{\Omega_f}^2 + \norm{\bm{f}^{n+1}}_{\Omega_f}^2\right). \label{eq:g_bound2}
    \end{equation}
    Summing up $n=0, \dots, N-1$ and using \eqref{eq:energy_1} give
    \begin{equation}
        \sum_{n=0}^{N-1}\norm{g^{n+1}}_{-1/2,\Omega_p}^2 \leq C_1 \sum_{n=0}^{N-1}\norm{\bm{f}^{n+1}}^2_{\Omega_f}. \label{eq:g_bound4}
    \end{equation}
    where $C_1 = 3C_E^2\left(\frac{C^2\rho_f}{\nu_f} + 
    C^2 \delta t
    + 1\right)$. 
    %{\color{red} $C_1 = 3C_e^2\left(\frac{C^2\rho_f}{\nu_f} + C_v^2 
    %C^2 \delta t
    %+ 1\right)$? Please check.}
    Adding  \eqref{eq:g_bound4} and \eqref{eq:energy_1}, we obtain \eqref{eq:energy_est}.
    
    Now, if we set $\varphi = w^{n+1}$ in \eqref{eq:poisson_semi_disc_weak} and by the Cauchy-Schwarz inequality, Young's inequality, and Poincar\'{e}-Friedrich inequality , we obtain
    \begin{align}
        \norm{\nabla w^{n+1}}_{\Omega_p}^2 &= (z^{n+1}, w^{n+1})_{\Omega_p} \\
        &\leq \frac{1}{2\epsilon}\norm{z^{n+1}}_{\Omega_p}^2 + \frac{C_P^2\epsilon}{2}\norm{\nabla w^{n+1}}_{\Omega_p}^2
    \end{align}
    and choosing $\epsilon = 1/C_P^2$ gives
    \begin{equation}
        \norm{\nabla w^{n+1}}_{\Omega_p}^2\leq C_P^2 \norm{z^{n+1}}^2_{\Omega_p}.
    \end{equation}
\end{proof}

In the next section, we describe the fully discrete problem developed using conforming finite element method and present error estimates for the problem.

\section{Fully-discrete problem} \label{sec:fully-disc}
Let $\mathcal{T}_{h_f}$ and $\mathcal{T}_{h_p}$ be shape regular triangulations of $\overline{\Omega}_f$ and $\overline{\Omega}_p$, respectively, into tetrahedra where $h_f$ and $h_p$ are the corresponding mesh sizes, defined by
\begin{equation}
    h_f = \max_{E\in \mathcal{T}_{h_f}}\mathrm{diam}(E) \quad\text{ and }\quad h_p = \max_{F\in \mathcal{T}_{h_p}}\mathrm{diam}(F).
\end{equation}
We define the following conforming discrete finite element spaces:
\begin{equation*}
    \begin{aligned}
        &\bm{U}_h = \{\bm{v}_h\in \bm{U}: \bm{v}_h|_{E}\in \mathbb{P}_{k+1}(E)^3, \quad\forall E\in \mathcal{T}_{h_f}\},\\
        &Q_{h} = \{q_h\in Q: q_h|_{E}\in \mathbb{P}_{k}(E), \quad \forall E\in \mathcal{T}_{h_f}\}, \\
        &W_h = \{\varphi_h\in W: \varphi_h|_{F}\in \mathbb{P}_{k+1}(F), \quad \forall F\in \mathcal{T}_{h_p}\}, \\
        &G_{h} = \{\lambda_h\in G: \lambda_h|_{F}\in \mathbb{P}_{k}(F), \quad \forall F\in \mathcal{T}_{h_p}\},
    \end{aligned}
\end{equation*}
for a fixed $1\leq k\in \mathbb{Z}$ and $\mathbb{P}_{k}$ is the space of polynomials with degree $\leq k$. Note that the finite element spaces $\bm{U}_h$ and $Q_{h}$ satisfy the discrete inf-sup condition:
\begin{equation} \label{eq:disc_infsup_Stokes}
    \sup_{\bm{v}_h\in \bm{U}_h} \frac{(\nabla \cdot \bm{v}_h, q_h)_{\Omega_f}}{\norm{\bm{v}_h}_{1,\Omega_f}} \geq \beta_p \norm{q_h}_{\Omega_f}, \quad \forall q_h \in Q_{h}.
\end{equation}
for some $\beta_p>0$. 

The fully-discrete problem reads: find $\left(\bm{u}_h^{n+1}, w_h^{n+1}, z_h^{n+1}, p_h^{n+1}, g_h^{n+1}\right)\in \bm{U}_h\times W_h\times W_h\times Q_h\times G_h$ such that
\begin{align} \label{eq:momentum_full_disc_weak}
    &\begin{aligned}
    &\rho_f(\bm{u}_h^{n+1}, \bm{v}_h)_{\Omega_f} + \nu_f\delta t(\nabla \bm{u}_h^{n+1}, \nabla\bm{v}_h)_{\Omega_f} - \delta t(p^{n+1}_{h}, \nabla\cdot \bm{v}_h)_{\Omega_f} \\
    &\qquad + \delta t\dual{g_h^{n+1}, (\bm{v}_h)_3}_{\Omega_p} = \delta t(\bm{f}^{n+1}, \bm{v}_h)_{\Omega_f} + \rho_f (\bm{u}_h^n, \bm{v}_h)_{\Omega_f}
    \end{aligned}
     &&\forall \bm{v}_h\in \bm{U}_h,\\
    &(\nabla\cdot \bm{u}_h^{n+1}, q_h)_{\Omega_f} = 0 &&\forall q_h\in Q_{h},\label{eq:continuity_full_disc_weak} \\
    &\begin{aligned}
    &\frac{1}{\delta t}(w_h^{n+1}, \eta)_{\Omega_p} + \frac{\rho}{\delta t}(z_h^{n+1}, \eta)_{\Omega_p} +\delta t (\nabla z_h^{n+1}, \nabla \eta_h)_{\Omega_p} - \delta t\dual{g_h^{n+1}, \eta_h}_{\Omega_p} \\
    &\qquad = \frac{2}{\delta t}(w^n_h, \eta)_{\Omega_p} - \frac{1}{\delta t}(w_h^{n-1}, \eta)_{\Omega_p} + \frac{2\rho}{\delta t} (z_h^n, \eta)_{\Omega_p} - \frac{\rho}{\delta t} (z_h^{n-1}, \eta)_{\Omega_p} 
    \end{aligned}
    &&\forall \eta_h\in W_h, \label{eq:plate_full_disc_weak}\\
    & (z_h^{n+1}, \varphi_h)_{\Omega_p} = (\nabla w_h^{n+1}, \nabla \varphi_h)_{\Omega_p} &&\forall \varphi_h\in W_h, \label{eq:poisson_disc_weak} \\
    &\dual{(\bm{u}_h)_3^{n+1}, \lambda_h}_{\Omega_p}- \frac{1}{\delta t}\dual{w_h^{n+1}, \lambda_h}_{\Omega_p}  = -\frac{1}{\delta t}\dual{w_h^{n}, \lambda_h}_{\Omega_p} &&\forall \lambda_h \in G_h. \label{eq:interface_full_disc_weak}
\end{align}
For the well-posedness of the fully discrete problem \eqref{eq:momentum_full_disc_weak}-\eqref{eq:interface_full_disc_weak}, we consider the pressure $p_h$ in  $Q_h\subset Q$ instead in the discrete mean zero space $Q_{0,h} \subset Q_0$ 
%where
%\begin{equation*}
%    Q_{h} = \{q_h\in Q: q_h|_{E}\in \mathbb{P}_{k}(E), %\quad\forall E\in \mathcal{T}_{h_f}\},
%\end{equation*}
and set the Lagrange multiplier to be $g_h = p_h|_{\Omega_p}\in G$. Hence, we rewrite \eqref{eq:momentum_full_disc_weak}-\eqref{eq:interface_full_disc_weak} as the saddle point problem which reads: find $\left(\bm{u}_h^{n+1}, w_h^{n+1}, z_h^{n+1}, p_h^{n+1}, g_h^{n+1}\right)\in \bm{U}_h\times W_h\times W_h\times Q_h\times G_h$ such that
\begin{equation} \label{eq:saddle_point_prob_full_disc}
    \begin{aligned}
        a_{\delta t}\left((\bm{u}_h^{n+1}, w_h^{n+1}, z_h^{n+1}), (\bm{v}_h, \varphi_h, \eta_h)\right) + b_{\delta t}^h\left((\bm{v}_h, \eta_h), \left(\delta t p_h^{n+1}, \delta t g_h^{n+1}\right)\right) &= f^1_{\delta t}(\bm{v}_h, \varphi_h, \eta_h), \\
        b_{\delta t}^h\left(\left(\bm{u}_h^{n+1}, \frac{1}{\delta t}w_h^{n+1}\right),(q_h, \lambda_h)\right) &= f_{\delta t}^2 (q_h, \lambda_h),
    \end{aligned}
\end{equation}
where $b_{\delta t}^h: \left(\bm{U}_h\times W_h\right) \rightarrow (Q_h\times G_h)$ is defined as
\begin{equation*}
    b_{\delta t}^h\left((\bm{v}_h, \eta_h), (q_h, \lambda_h)\right) \coloneqq -(\nabla\cdot \bm{v}_h, q_h)_{\Omega_f} + \dual{(\bm{v}_h)_3, \lambda_h}_{\Omega_p} - \dual{\eta_h, \lambda_h}_{\Omega_p}.
\end{equation*}
Note that we reformulate the system by allowing the pressure variable to have a non-zero mean. This modification enables a more straightforward establishment of the inf-sup condition. We emphasize that this reformulation is equivalent to the original problem.

The coercivity of $a_{\delta t}$ over the discrete spaces can easily be adopted from \eqref{eq:semi_disc_coercive}. Therefore, 
the well-posedness of \eqref{eq:momentum_full_disc_weak}-\eqref{eq:interface_full_disc_weak} follows from the inf-sup condition for $b_{\delta t}^h$ established in the following theorem.

\begin{theorem}
    There exists a positive constant $\beta_h$ such that 
    \begin{equation}\label{eq:full_disc_infsup}
        \sup_{(\bm{v}_h, \eta_h)\in(\bm{U}_h, W_h)} \frac{b_{\delta t}^h((\bm{v}_h, \eta_h), (q_h, g_h))}{\left(\norm{\bm{v}_h}_{1, \Omega_f}^2 + \norm{\eta_h}_{1, \Omega_p}^2\right)^{1/2}} \geq \beta_h \left(\norm{q_h}_{\Omega_f}^2 + \norm{g_h}_{-1/2, \overline{\Omega}_p}\right)^{1/2},
    \end{equation}
    for any $(q_h, g_h)\in (Q_h, G_h)$.
\end{theorem}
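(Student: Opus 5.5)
The plan is to construct, for a given $(q_h,g_h)\in Q_h\times G_h$, an explicit test pair $(\bm{v}_h,\eta_h)\in\bm{U}_h\times W_h$. The construction splits the two pieces of $b_{\delta t}^h$: the fluid velocity will control $q_h$ through the Stokes inf-sup condition \eqref{eq:disc_infsup_Stokes}, and the plate test function will control $g_h$ through the term $-\dual{\eta_h,g_h}_{\Omega_p}$. The reason this is simpler than Lemma~\ref{thm:semi-disc-infsup} is that the pressure is now taken in all of $Q_h$, not $Q_{0,h}$. Constants are therefore absorbed directly by \eqref{eq:disc_infsup_Stokes}, and we no longer need the auxiliary variable $c$ or the mean-value correction $\tilde{\bm{u}}_1$.

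\textbf{Step 1 (plate part).} Since $W_h\subset W$ is conforming, I take $\eta_h\in W_h$ to be the discrete Riesz representative of $-g_h$. That is, $\eta_h$ solves $(\nabla\eta_h,\nabla\varphi_h)_{\Omega_p}=-(g_h,\varphi_h)_{\Omega_p}$ for all $\varphi_h\in W_h$, exactly as in \eqref{eq:infsup_poisson_1}. Then $-\dual{\eta_h,g_h}_{\Omega_p}=\norm{\nabla\eta_h}_{\Omega_p}^2$, and by Poincar\'e $\norm{\eta_h}_{1,\Omega_p}\le C\norm{\nabla\eta_h}_{\Omega_p}$.

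The key ingredient, and the \emph{main obstacle}, is a lower bound $\norm{\nabla\eta_h}_{\Omega_p}\ge c\norm{g_h}_{-1/2,\Omega_p}$ with $c$ independent of $h_p$. This is a discrete inf-sup property of the pair $(W_h,G_h)$ with respect to the $L^2$ pairing. My first attempt would use the $H^1$-stability of the $L^2$ projection $\Pi_h$ onto $W_h$ (valid on shape-regular, quasi-uniform meshes). It gives
\[
\sup_{\varphi_h\in W_h}\frac{(g_h,\varphi_h)}{\norm{\varphi_h}_{1}}\ \ge\ C\sup_{\varphi\in H^1_0}\frac{(g_h,\Pi_h\varphi)}{\norm{\varphi}_{1}}\ =\ C\,\norm{g_h}_{H^{-1}(\Omega_p)},
\]
which holds because $G_h\subset W_h$ up to boundary values: $\mathbb{P}_k\subset\mathbb{P}_{k+1}$, and bubble or interior enrichment handles the boundary nodes. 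To upgrade the $H^{-1}$ norm to the $H^{-1/2}$ norm, I would apply an inverse estimate on $G_h$, namely $\norm{g_h}_{-1/2}\le Ch_p^{-1/2}\norm{g_h}_{-1}$.

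That inverse estimate costs a factor $h_p^{1/2}$, so the resulting constant is not uniform. If a uniform $\beta_h$ is needed, I would instead borrow control from the fluid side, as Step 2 does.

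\textbf{Step 2 (fluid part).} I would take $\bm{v}_h=\bm{v}_h^0+\bm{v}_h^1$. Here $\bm{v}_h^0\in\bm{U}_h$ vanishes on $\Omega_p$ and realizes \eqref{eq:disc_infsup_Stokes} for the component of $q_h$ that it controls, so that $-(\nabla\cdot\bm{v}_h^0,q_h)\ge\beta_p\norm{q_h}\,\norm{\bm{v}^0_h}_1$. The second piece $\bm{v}_h^1$ is a discrete harmonic extension (Scott--Zhang interpolant of the continuous extension $E$ used earlier) of $\epsilon\, \xi_h$, where $\xi_h$ is a discrete approximation of the $H^{1/2}$-Riesz representative of $g_h$. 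This gives $\dual{(\bm{v}_h^1)_3,g_h}_{\Omega_p}\gtrsim \epsilon\norm{g_h}_{-1/2}^2$ and $\norm{\bm{v}_h^1}_1\le C\epsilon\norm{g_h}_{-1/2}$.

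\textbf{Step 3 (combine).} The cross term $-(\nabla\cdot\bm{v}_h^1,q_h)$ and the term $(\bm{v}^0_h)_3=0$ on $\Omega_p$ are handled by Young's inequality: choose $\epsilon$ small relative to $\beta_p$, following the template of \eqref{eq:infsup1}. This yields
\[
b_{\delta t}^h((\bm{v}_h,\eta_h),(q_h,g_h))\ge \tilde D\left(\norm{q_h}^2+\norm{g_h}_{-1/2}^2\right).
\]
Together with $\norm{\bm{v}_h}_1+\norm{\eta_h}_1\le \tilde C(\norm{q_h}+\norm{g_h}_{-1/2})$, dividing gives \eqref{eq:full_disc_infsup} with $\beta_h=\tilde D/\tilde C$.

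The delicate point is the uniform discrete $H^{1/2}$ lifting in Step 2, i.e.\ the uniform control of $g_h$ in the $H^{-1/2}$ norm. I expect to need quasi-uniformity of $\mathcal{T}_{h_p}$ and compatibility of the trace of $\mathcal{T}_{h_f}$ on $\Omega_p$ with $\mathcal{T}_{h_p}$.
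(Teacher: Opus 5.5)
Your construction follows the same plan as the paper: a velocity vanishing on $\partial\Omega_f$ for the pressure, a lifted velocity for $g_h$, and a plate function for $g_h$. However, Step~3 breaks down on the mean of $q_h$.

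Write $q_h=q_0+\bar q$ with $\bar q=\abs{\Omega_f}^{-1}\int_{\Omega_f}q_h$. Since $\bm{v}_h^0$ vanishes on $\Omega_p$ and on $S$, we have $(\nabla\cdot\bm{v}_h^0,1)_{\Omega_f}=0$, so $\bm{v}_h^0$ controls only $q_0$. Nothing else in your construction puts $\bar q^2$ on the positive side. Consequently the piece $-\bar q\int_{\Omega_p}(\bm{v}_h^1)_3$ of your cross term $-(\nabla\cdot\bm{v}_h^1,q_h)_{\Omega_f}$ cannot be absorbed by Young's inequality. Concretely, for $(q_h,g_h)=(1,0)$ your recipe gives $\bm{v}_h^1=\bm{0}$, $\eta_h=0$, and $b_{\delta t}^h=-(\nabla\cdot\bm{v}_h^0,1)_{\Omega_f}=0$.

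This is structural, not a defect of your particular choices. The outward normal on $\Omega_p$ is $(0,0,1)^T$ and $\bm{v}_h=\bm{0}$ on $S$, so $(\nabla\cdot\bm{v}_h,1)_{\Omega_f}=\int_{\Omega_p}(\bm{v}_h)_3$ for every $\bm{v}_h\in\bm{U}_h$. Hence
\[
b_{\delta t}^h\big((\bm{v}_h,\eta_h),(q_0+\bar q,\,g_h)\big)=b_{\delta t}\big((\bm{v}_h,\eta_h),(q_0,\,g_h-\bar q,\,\bar q)\big).
\]
Taking the pressure in all of $Q_h$ therefore does not remove the constant $c$ of Lemma~\ref{thm:semi-disc-infsup}, contrary to your opening remark; it reappears as $\bar q$. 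In particular, the pair $(q_h,g_h)=(1,1)$ is annihilated by \emph{every} velocity test function, so only the plate term can detect it. The paper's written proof has the same hole: its $\tilde{\bm{v}}_h\in\bm{U}_0^h$ in \eqref{V1_Prop} cannot satisfy $(\nabla\cdot\tilde{\bm{v}}_h,q_h)_{\Omega_f}\ge\beta^*\norm{q_h}_{\Omega_f}$ when $\bar q\neq 0$. Following the paper will not close this gap.

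The identity also shows how to repair the argument: transcribe Lemma~\ref{thm:semi-disc-infsup} with $\lambda=g_h-\bar q$ and $c=\bar q$.
\begin{itemize}
\item Keep $\bm{v}_h^0$ for $q_0$.
\item Apply your Step~2 lifting to $\mu_h=g_h-\bar q\in G_h$ instead of to $g_h$.
\item Take $\eta_h=-t\bar q\,\phi_h$, where $\phi_h\in W_h$ is fixed with $\int_{\Omega_p}\phi_h$ bounded below and $\norm{\phi_h}_{1,\Omega_p}$ bounded above uniformly in $h_p$ (e.g.\ the $W_h$-Galerkin approximation of $\tilde{\phi}$).
\end{itemize}
Then $-\dual{\eta_h,g_h}_{\Omega_p}=t\bar q^2\int_{\Omega_p}\phi_h+t\bar q\dual{\phi_h,\mu_h}_{\Omega_p}$. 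The cross term is absorbed for small $t$, and $g_h$ is recovered from $\norm{g_h}_{-1/2,\Omega_p}\le\norm{\mu_h}_{-1/2,\Omega_p}+\abs{\bar q}\,\norm{1}_{-1/2,\Omega_p}$.

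With this repair the plate only has to detect one scalar, so your Step~1 becomes unnecessary. That is fortunate, because your diagnosis there is correct. $L^2$-projection stability yields only $H^{-1}$ control, and the bound $\norm{g_h}_{-1/2}\le C\sup_{k\in H^1_0(\Omega_p)}(g_h,k)_{\Omega_p}/\norm{k}_{1,\Omega_p}$ used in the paper's \eqref{sh_bound_phi} cannot hold with $C$ independent of $h_p$.

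Finally, in Step~2 a Scott--Zhang interpolant of $E\xi$ does not preserve $\dual{g_h,(\cdot)_3}_{\Omega_p}$. The naive bound on the interpolation error (a trace estimate of order $h^{1/2}$ times an inverse estimate of order $h^{-1/2}$ on $G_h$) is of the same order as the main term. You therefore need either a pairing-preserving operator as in \eqref{ProjectorConds}, which the paper postulates rather than proves, or a mesh-compatibility assumption, as you anticipated.
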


\begin{proof}
Let $(q_h, g_h)\in (Q_h, G_h)$ be given. For ${k} \in H^{1/2}_{\overline{\Omega}_p}$, we may find $\bm{v}_k \in \bm{U}$ such that  \\
\begin{align}\label{vkConds}
\begin{split}
   (\bm{v}_k)_3 \big|_{\overline{\Omega}_p} &= {k}\\
\|\bm{v}_k\|_{1, \Omega_f} &\leq C_1 \| {k}\|_{1/2,\overline{\Omega}_p}.
\end{split}
\end{align}
By \eqref{vkConds} and the definition of the dual norm,
\begin{align}\label{sh_bound}
\begin{split}
    \|g_h\|_{-1/2,\overline{\Omega}_p} &= \underset{k \in H^{1/2} (\overline{\Omega}_p) }{\sup}     \frac{\langle g_h,k\rangle_{\overline{\Omega}_p}} {\|k\|_{1/2,\overline{\Omega}_p}} \\
    &\leq \underset{k \in H^{1/2} (\overline{\Omega}_p)  }{\sup} \frac{C_1 \langle g_h,(\bm{v}_k)_3\rangle_{\overline{\Omega}_p}} {\|\bm{v}_k\|_{1, \Omega_f}} \\
    &\leq  \underset{\bm{v_k} \in \bm{U}}{\sup} \frac{C_1 \langle g_h,(\bm{v}_k)_3\rangle_{\overline{\Omega}_p}} {\|\bm{v}_k\|_{1, \Omega_f}} .
    \end{split}
\end{align}
There exists the projection operator $I_f^h : \bm{U} \rightarrow \bm{U}_h$ satisfying the following two conditions for $\bm{v_k} \in \bm{U}$ \cite{LionsBooks1972}
\begin{align}\label{ProjectorConds}
\langle g_h, (\bm{v}_k)_3 \rangle_{\overline{\Omega}_p} &= \langle g_h, (I^h_f\bm{v}_k)_3\rangle_{\overline{\Omega}_p}, \\ %
\|I^h_f \bm{v}_k\|_{1, \Omega_f} &\leq C_2 \| \bm{v}_k\|_{1, \Omega_f}.
\end{align}
With these properties, we have, for $\bm{v}_k \in \bm{U}$ and $g_h \in G_h$,
\begin{align*}
 \frac{C_1 \langle g_h,(\bm{v}_k)_3\rangle_{\overline{\Omega}_p}}{\|\bm{v}_k\|_{1, \Omega_f}}  &=   \frac{C_1 \langle g_h,(I^h_f \bm{v}_k)_3\rangle_{\overline{\Omega}_p}}{\|\bm{v}_k\|_{1, \Omega_f}}  \\
 &\leq  \frac{C_1 C_2 \langle g_h, (I^h_f \bm{v}_k)_3\rangle_{\overline{\Omega}_p}}{\|I^h_f\bm{v}_k\|_{1, \Omega_f}} \\
 &\leq \underset{\bm{v}_{k}^h \in \bm{U}_h}{\sup}\frac{C_1 C_2 \langle g_h, (\bm{v}_{k}^h)_3\rangle_{\overline{\Omega}_p}}{\|\bm{v}_{k}^h\|_{1, \Omega_f}}.
\end{align*}
Taking the supremum over $\bm{v}_k \in \bm{U}$ of this inequality yields:
\begin{align*}
\underset{\bm{v_k} \in \bm{U}}{\sup}  \frac{C_1 \langle g_h,(\bm{v}_k)_3\rangle_{\overline{\Omega}_p}}{\|\bm{v}_k\|_{1, \Omega_f}} \leq \underset{\bm{v}_k \in \bm{U}}{\sup} \underset{\bm{v}_k^h \in \bm{U}_h}{\sup}
\frac{C_1 C_2 \langle g_h,(\bm{v}_k^h)_3\rangle_{\overline{\Omega}_p}}{\|\bm{v}_k^h\|_{1, \Omega_f}} = \underset{\bm{v}_k^h \in \bm{U}_h}{\sup}\frac{C_1 C_2 \langle g_h,(\bm{v}_k^h)_3\rangle_{\overline{\Omega}_p}}{\|\bm{v}_k^h\|_{1, \Omega_f}}.
\end{align*}
Thus, combining \eqref{sh_bound} with the above, we have the desired inf-sup condition:
\begin{align*}
\|g_h\|_{-1/2,\overline{\Omega}_p} \leq  \underset{\bm{v_k} \in  \bm{U}}{\sup} \frac{C_1 \langle g_h,(\bm{v}_k)_3\rangle_{\overline{\Omega}_p}}{\|\bm{v}_k\|_{1}} \leq C_1 C_2\underset{\bm{\bm{v}_k}^h \in \bm{U}_h}{\sup}\frac{ \langle g_h,(\bm{v}_k^h)_3\rangle_{\overline{\Omega}_p}}{\|\bm{v}_k^h\|_{1}}.
\end{align*}
Rewritten,
\begin{align}\label{infsup_LM_Stokes}
\underset{\bm{v}_h \in \bm{U}_h}{\sup}\frac{ \langle g_h,(\bm{v}_h)_3\rangle_{\overline{\Omega}_p}}{\|\bm{v}_h\|_{1}} \geq \beta_f \|g_h\|_{-1/2,\overline{\Omega}_p} \quad \forall g_h \in G_h.
\end{align}

For $k \in H^1(\Omega_p) $  there exists the $L^2$-projector $P_s : H^1(\Omega_p) \rightarrow W_h$ satisfying
\begin{align}\label{ProjectorConds_phi}
( g_h, k )_{\Omega_p} &= ( g_h, P_s k )_{\Omega_p}, \\ 
\|P_s k\|_1 &\leq C_3 \| k\|_{1,\Omega_p} .
\end{align}
Since $g_h \in L^2(\Omega_p) \subset H^{-1/2}(\overline{\Omega}_p)$, 
\begin{equation} \label{equi_products}
\langle g_h,k\rangle_{\Omega_p}  = (g_h,k )_{\Omega_p}. % \quad \mbox{(dual product = $L^2$ inner product $= \int g^h \, k \; d \Omega_p$)}
\end{equation}
Thus, from \cite{LionsBooks1972}, we have
\begin{align}\label{sh_bound_phi}
\begin{split}
    \|g_h\|_{-1/2,\overline{\Omega}_p} &\le  C\underset{k \in H^1_0(\Omega_p) } {\sup} \frac{\langle g_h,k\rangle_{\Omega_p} } {\|k\|_{1, \Omega_p} } \\%  \quad  [J.-L. Lions, E. Magenes]
    &=  C \underset{k \in W } {\sup} \frac{( g_h,k)_{\Omega_p} } {\|k\|_{1, \Omega_p} }  \\
    &\leq C C_3 \underset{k \in W }{\sup} \frac{ ( g_h, P_s {k} ) }{ \| P_s k\|_{1, \Omega_p}} \\
    &\leq  C C_3 \underset{\varphi_k^h \in W_h}{\sup} \frac{\langle g_h,{\varphi_k^h}\rangle_{\Omega_p}}{\|{\varphi_k^h}\|_{1, \Omega_p}}.
    \end{split}
\end{align}
Rewritten, 
\begin{align}\label{infsup_LM_Structure}
 \underset{\varphi_h \in W_h}{\sup}\frac{ \langle g_h,\varphi_h\rangle_{{\Omega}_p}}{\|\varphi^h\|_{1, \Omega_p}} \geq  \beta_s\|g_h\|_{-1/2,\overline{\Omega}_p} \quad \forall g_h \in G_h.
\end{align}
By \eqref{infsup_LM_Stokes}, there exists $\hat{\bm{v}}_h \in \bm{U}_h$ such that 
\begin{align}\label{V2_Prop}
\|\hat{\bm{v}}_h\|_{1, \Omega_p} = 1 \quad \text{ and } \quad \langle (\hat{\bm{v}}_h)_3,g_h\rangle_{\overline{\Omega}_p} \geq \beta_f \|g_h\|_{-1/2,\overline{\Omega}_p} .
\end{align}
By the Cauchy-Schwarz inequality 
 \begin{align}\label{V21_Prop}
(\nabla \cdot \hat{\bm{v}}_h,q_h) \le   \|\nabla \cdot  \hat{\bm{v}}_h\|_{\Omega_f}  \|q^h\|_{\Omega_f} \   \le \sqrt{3} \|\nabla  \hat{\bm{v}}\|_{\Omega_f}   \|q_h\|_{\Omega_f} \le \sqrt{3}\|q_h\|_{\Omega_f}.
\end{align}

\noindent Likewise, by \eqref{eq:disc_infsup_Stokes}, there exists $\tilde{\bm{v}}_h \in \bm{U}^h_0$ with
\begin{align}\label{V1_Prop}
\|\tilde{\bm{v}}^h\|_{1, \Omega_f} = 1 \quad \text{ and } \quad (\nabla \cdot \tilde{\bm{v}}_h,q_h) \geq \beta^* \|q^h\|_{\Omega_f},
\end{align}
where $\bm{U}^h_0 \subset \bm{U}_h \subset \bm{U}$.
\noindent By \eqref{infsup_LM_Structure}, there exists $\hat{\varphi}_h \in W_h$ such that 
\begin{align}\label{Varphi1_Prop}
\|\hat{\varphi}_h\|_{1, \Omega_p} = 1 \quad \text{ and } \quad -\langle \hat{\varphi}_h,g_h\rangle_{\overline{\Omega}_p} \geq \beta_s \|g_h\|_{-1/2,\overline{\Omega}_p} .
\end{align}

\noindent\textit{Remark: If $\bm{\zeta}_h$ satisfies $\dfrac{\langle g_h,\bm{\zeta}_h \rangle_{\overline{\Omega}_p}}{\|\bm{\zeta}_h\|_{1, \Omega_p}} \geq \beta_s \|g_h\|_{-1/2,\overline{\Omega}_p}$, take $\bm{\varphi}_h := -\dfrac{\bm{\zeta}_h}{\|\bm{\zeta}^h\|_{1, \Omega_p}}.$ }

\noindent Let $\bm{v}_h := \hat{\bm{v}}_h + (1 + \frac{\sqrt{3} }{\beta^*}) \tilde{\bm{v}}_h \in \bm{U}_h$ and $\varphi_h = \hat{\varphi}_h\in W_h$. Then applying \eqref{V2_Prop}-\eqref{Varphi1_Prop} in the following yields
\begin{align*}
&-(\nabla \cdot \bm{v}_h, q_h) + \langle (\bm{v}_h)_3,g_h\rangle_{\overline{\Omega}_p} - \langle {\varphi}_h,g_h\rangle_{\overline{\Omega}_p} \\
&= -(\nabla \cdot \hat{\bm{v}}_h, q_h) + (1 + \frac{\sqrt{3}}{\beta^*}) (\nabla \cdot \tilde{\bm{v}}_h, q_h) + \langle (\hat{\bm{v}}_h)_3,g_h\rangle_{\overline{\Omega}_p} + (1 + \frac{\sqrt{3}}{\beta^*}) \langle  (\tilde{\bm{v}}_h)_3,g_h\rangle_{\overline{\Omega}_p} \\
& \hspace{.5in} 
-  \langle  \hat{\varphi}_h ,g_h\rangle_{\overline{\Omega}_p}  
\\
&\geq - \sqrt{3} \| q_h\|_{\Omega_f}+ \beta^* (1 + \frac{\sqrt{3}}{\beta^*}) \|q_h\|_{\Omega_f}  + {\beta_f}\|g_h\|_{-1/2,\overline{\Omega}_p} + 0 
+ \beta_s \|g_h\|_{-1/2,\overline{\Omega}_p}   \\
&= \beta^* \|q_h\|_{\Omega_f}   +  \left( {\beta_f}  + \beta_s \right) \|g_h\|_{-1/2,\overline{\Omega}_p} .
\end{align*}
\noindent Note that 
\begin{align*}
\|\bm{v}_h\|_{1, \Omega_f} + \|\varphi_h\|_{1, \Omega_p} \leq \|\hat{\bm{v}}_h\|_{1, \Omega_f} + (1 + \frac{\sqrt{3}}{\beta^*}) \| \tilde{\bm{v}}_h\|_{1, \Omega_f} +  \| \hat{\varphi}_h\|_{1, \Omega_p} = 3+ \frac{\sqrt{3}}{\beta^*},
\end{align*}
which implies that $$ \frac{\|\bm{v}_h\|_{1, \Omega_f} + \|{\varphi}_h\|_{1, \Omega_p}}{3 + \frac{\sqrt{3}}{\beta^*}} \leq 1.$$
Thus, 
\begin{align*}
&-(\nabla \cdot \bm{v}_h, q_h) + \langle (\bm{v}_h)_3,g_h\rangle_{\overline{\Omega}_p} - \langle {\varphi}_h,g_h\rangle_{\overline{\Omega}_p} 
\\
&\geq \min\{\beta^*,\beta_f+\beta_s\} \left( \|q_h\|_{\Omega_f} + \|g_h\|_{-1/2,\overline{\Omega}_p} \right)\\
&\geq   \frac{\min\{\beta^*, \beta_f+\beta_s\}} {3 + \frac{\sqrt{3}}{\beta^*}} \left(\|\bm{v}_h\|_{1, \Omega_p} + \|{\varphi}_h\|_{1, \Omega_p} \right) \left( \|q_h\|_{\Omega_f} + \|g_h\|_{-1/2,\overline{\Omega}_p}  \right)\\
&\geq  \frac{\min\{\beta^*, \beta_f+\beta_s\}} {3 + \frac{\sqrt{3}}{\beta^*}} \left(\|\bm{v}_h\|_{1, \Omega_f}^2 + \|{\varphi}_h\|_{1, \Omega_p}^2 \right)^{1/2} \left( \|q_h\|_{\Omega_f}^2 + \|g_h\|_{-1/2,\overline{\Omega}_p}^2 \right)^{1/2}\\
&= \frac{\min\{\beta^*, \beta_f+\beta_s\}} {3+ \frac{\sqrt{3}}{\beta^*}} \|(\bm{v}_h,{\varphi}_h)\|_{\bm{U}_h\times W_h}\ \|(q_h,g_h)\|_{Q^h \times G^h },
\end{align*}
and we are done since $(q_h, g_h)\in (Q_h, G_h)$ is arbitrary.

\end{proof}

%\begin{rem}
%    We note that the inf-sup condition for the saddle-point problem \eqref{eq:saddle_point_prob_full_disc} is still open.
%\end{rem}

For the error estimates of \eqref{eq:momentum_full_disc_weak}-\eqref{eq:interface_full_disc_weak}, we assume that there exists a unique solution $(\bm{u}, p_0, w, z, g, s)\in \bm{U}\times Q_0\times W\times W\times G\times \mathbb{R}$ to \eqref{eq:momentum_weak}-\eqref{eq:interface_weak} which is sufficiently smooth. We define the truncation errors $\tau_u^{n+1}$, $\tau_w^{n+1}$, $\tau_z^{n+1}$ in time by
\begin{equation} \label{eq:time_trunc_err}
    \begin{aligned}
        \tau_u^{n+1} \coloneqq \partial_{t}\bm{u}(t^{n+1}) -& \frac{\bm{u}(t^{n+1})-\bm{u}^n}{\delta t}, \quad \tau_w^{n+1} \coloneqq \partial_{tt}w(t^{n+1}) - \frac{w^{n+1}-2w^n+w^{n-1}}{(\delta t)^2},\\
        &\tau_z^{n+1} \coloneqq \partial_{tt}z(t^{n+1}) - \frac{z^{n+1}-2z^n+z^{n-1}}{(\delta t)^2}.
    \end{aligned}
\end{equation}
Then, by using Taylor series approximation, we have the following truncation error estimates
\begin{equation} \label{eq:taylor_bound}
    \begin{aligned}
        \norm{\tau_u^{n+1}}_{\Omega_f} \leq C_u\delta t&\norm{\partial_{tt}\bm{u}(\theta_u^{n+1})}_{\Omega_f},\quad \norm{\tau_w^{n+1}}_{\Omega_p} \leq C_w\delta t\norm{\partial_{ttt}w(\theta_w)^{n+1}}_{\Omega_p},\\
        &\text{ }\norm{\tau_z^{n+1}}_{\Omega_p} \leq C_z\delta t\norm{\partial_{ttt}z(\theta_z^{n+1})}_{\Omega_p}.
    \end{aligned}
\end{equation}
for some $t^n\leq\theta_u^{n+1}, \theta_w^{n+1}, \theta_z^{n+1}\leq t^{n+1}$. We also define the Stokes projection \cite{Layton2008} operator $\Pi_u: \bm{U} \to \bm{U}_h$ which satisfies
\begin{equation} \label{eq:stokes-proj}
    \begin{aligned}
        &\nu_f\left(\left(\nabla\bm{u} - \nabla \Pi_u\bm{u}\right), \nabla\bm{v}_h\right)_{\Omega_f} - \left(p_{0,h}, \nabla\cdot\bm{v}_h\right)_{\Omega_f} = 0 && \forall \bm{v}_h\in\bm{U}_h,\\
        &\left(\left(\nabla\cdot\bm{u} - \nabla\cdot \Pi_u\bm{u}\right), q_h\right)_{\Omega_f} = 0 &&\forall q_h\in Q_{h},
    \end{aligned}
\end{equation}
with the following approximation error
\begin{equation} \label{eq:stokes_proj_bound}
    \norm{\bm{u} - \Pi_u \bm{u}}_{1,\Omega_f}\leq Ch_f^{k}\norm{\bm{u}}_{k+1, \Omega_f}.
\end{equation}
Analogously, let $\Pi_g: G \to G_h$, $\Pi_p: Q\to Q_{h}$, and $\Pi_z: W\to W_h$ be the $L^2$ projection operators, and $\Pi_w: W\to W_h$ be the $H^1$ projection operator \cite{Ern_Guermond2004} onto the discrete FE spaces which satisfy
\begin{equation} \label{eq:l2-proj}
    \begin{aligned}
        &\left(\left(g-\Pi_g g\right), \lambda_h\right)_{\Omega_p} = 0 &&\forall \lambda_h \in G_h, \\
        &\left(\left(p-\Pi_p p\right), q_h\right)_{\Omega_f} = 0 &&\forall q_h \in Q_{h}, \\
        &\left(\left(z-\Pi_z z\right), \eta_h\right)_{\Omega_p} = 0 &&\forall \eta_h \in W_h, \\
        &\left(\left(w-\Pi_w w\right), \varphi_h\right)_{\Omega_p} + \rho\left(\nabla\left(w-\Pi_w w\right), \nabla\varphi_h\right)_{\Omega_p} = 0 &&\forall \varphi_h \in W_{h},
    \end{aligned}
\end{equation}
with approximation properties
\begin{equation} \label{eq:l2_proj_bound}
    \begin{aligned}
        \norm{p - \Pi_p p}_{\Omega_f} \leq Ch_f^{k}\norm{p}_{k, \Omega_f}, &\quad \norm{g - \Pi_g g}_{\Omega_p} \leq Ch_p^{k}\norm{g}_{k, \Omega_p}, \\
        \norm{z - \Pi_z z}_{\Omega_p} \leq Ch_f^{k+1}\norm{z}_{k+1, \Omega_f}, &\quad \norm{z - \Pi_z z}_{1,\Omega_p} \leq Ch_p^{k}\norm{z}_{k+1, \Omega_p}, \\
        \norm{w - \Pi_w w}_{1,\Omega_p} &\leq Ch_p^{k}\norm{w}_{k+1, \Omega_p}.
    \end{aligned}
\end{equation}

\begin{theorem}
    Suppose that the exact solution 
    $(\bm u,p,w,z,g)$ of \eqref{eq:momentum_weak}--\eqref{eq:interface_weak} is sufficiently smooth. 
    Let $(\bm{u}_h^{n+1},p_h^{n+1},w_h^{n+1},z_h^{n+1},g_h^{n+1})$ be the fully discrete finite element solution obtained using $(\mathbb{P}_{k+1})^3$--$\mathbb{P}_k$ elements for the fluid velocity and pressure, $\mathbb{P}_{k+1}$ elements for $w$ and $z$, and $\mathbb{P}_k$ elements for the Lagrange multiplier $g$. Then, there exists a positive constant $C$, independent of the mesh sizes $h_f,h_p$ and the time step $\delta t$, such that the following a priori error estimate holds:
    \begin{equation}
        \begin{aligned}
            &\norm{\partial_t w(t^{N}) - \dot{w}_h^N}_{1, \Omega_p} + \norm{z(t^{N}) - z_h^N}_{\Omega_p} + \norm{\bm{u}(t^N) - \bm{u}_h^N}_{\Omega_f} \\
            &+ \sum_{n=0}^{N-1}\left( \sqrt{\delta t}\norm{\bm{u}(t^{n+1}) - \bm{u}_h^{n+1}}_{1,\Omega_f} + \delta t\norm{p(t^{n+1}) - p_h^{n+1}}_{\Omega_f} + \delta t \norm{g(t^{n+1}) - g^{n+1}}_{-1/2, \Omega_p}\right) \\
            &\leq C(\delta t + h_f^k + h_p^k).
        \end{aligned}
    \end{equation}
\end{theorem}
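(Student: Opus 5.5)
We will follow the standard projection-based energy argument, mimicking the stability proof of the semi-discrete problem. We evaluate \eqref{eq:momentum_weak}--\eqref{eq:interface_weak} at $t=t^{n+1}$, replace the time derivatives by the difference quotients in \eqref{eq:back_euler} plus the truncation errors \eqref{eq:time_trunc_err}, and subtract the fully discrete system \eqref{eq:momentum_full_disc_weak}--\eqref{eq:interface_full_disc_weak} tested with discrete functions. This yields error equations with consistency terms of size $O(\delta t)$. Each error is split into an interpolation part and a discrete part:
\[
\bm{u}(t^{n+1})-\bm{u}_h^{n+1}=(\bm{u}-\Pi_u\bm{u})^{n+1}+(\Pi_u\bm{u}^{n+1}-\bm{u}_h^{n+1})=:\bm{\eta}_u^{n+1}+\bm{e}_u^{n+1}.
\]
The analogous splittings $w=\eta_w+e_w$ with $\Pi_w$, $z=\eta_z+e_z$ with $\Pi_z$, $p=\eta_p+e_p$ with $\Pi_p$, and $g=\eta_g+e_g$ with $\Pi_g$ are used. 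The discrete parts then satisfy the fully discrete system with a right-hand side collecting the truncation terms, the projection errors, and their difference quotients in time. By \eqref{eq:taylor_bound} and \eqref{eq:stokes_proj_bound}--\eqref{eq:l2_proj_bound}, these are bounded by $C(\delta t+h_f^k+h_p^k)$.

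We then repeat the energy argument of the semi-discrete stability theorem on the discrete errors. Testing the momentum error equation with $\bm{v}_h=\bm{e}_u^{n+1}$ and the continuity equation with $q_h=e_p^{n+1}$ cancels the pressure coupling. The defining relation \eqref{eq:stokes-proj} of $\Pi_u$ kills the viscous projection term up to $(\eta_p,\nabla\cdot\bm{e}_u)$, which is controlled by Young's inequality. For the plate we test with $\eta_h=\dot e_w^{n+1}$. In the rotational term we use the discrete analogue of \eqref{eq:rotation_switch}, obtained by differencing \eqref{eq:poisson_disc_weak} twice in time. In the stiffness term we use the discrete analogue of \eqref{eq:cross_term}, which produces the telescoping $\tfrac12(\|e_z^{n+1}\|^2-\|e_z^n\|^2)$. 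The choice of $\Pi_w$, namely an $L^2+\rho H^1$ projection, and $\Pi_z$, an $L^2$ projection, is meant precisely to remove the leading inertia and auxiliary-equation projection terms. The interface equation tested with $\lambda_h=e_g^{n+1}$ makes the Lagrange-multiplier terms $\delta t\langle e_g^{n+1},(e_u)_3^{n+1}-\dot e_w^{n+1}\rangle$ cancel upon adding the fluid and plate identities. They cancel up to terms involving $\eta_g$, which are handled via $\|\eta_g\|_{-1/2}\le\|\eta_g\|_{\Omega_p}$. The mean-pressure term vanishes as in the stability proof, by \eqref{eq:wdot_zeromean}. Summing over $n$ and applying the discrete Gronwall lemma bounds $\|\bm{e}_u^N\|$, $\|\dot e_w^N\|_{1}$, $\|e_z^N\|$ and $\delta t\sum\|\nabla\bm{e}_u^{n+1}\|^2$ by $C(\delta t+h_f^k+h_p^k)^2$. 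The triangle inequality with the projection estimates then gives the first three terms, and Cauchy--Schwarz in $n$ converts the $\ell^2$ sum into the $\sqrt{\delta t}\sum$ form.

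The pressure and multiplier errors come last. We take the momentum error equation with arbitrary $\bm{v}_h$, isolate $-(e_p^{n+1},\nabla\cdot\bm{v}_h)+\langle e_g^{n+1},(\bm{v}_h)_3\rangle$, and invoke the discrete inf-sup condition \eqref{eq:full_disc_infsup}, taking $\eta_h=0$ or using the structure component. This bounds $\|e_p^{n+1}\|+\|e_g^{n+1}\|_{-1/2}$ by $\|\dot{\bm{e}}_u^{n+1}\|$, $\|\nabla \bm{e}_u^{n+1}\|$ and the consistency terms. Summing with weight $\delta t$ and using the energy bound closes the estimate.

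The main obstacle we expect is control of $\|\dot{\bm{e}}_u^{n+1}\|$. The energy estimate only gives $(\delta t)^2\sum\|\dot{\bm{e}}_u\|^2$, which is too weak. We would first try a second energy estimate: test with $\bm{v}_h=\dot{\bm{e}}_u^{n+1}$ and $\eta_h=\ddot e_w^{n+1}$, equivalently difference the error equations in time. This yields $\delta t\sum\|\dot{\bm{e}}_u^{n+1}\|^2\le C(\delta t+h^k)^2$, at the cost of requiring additional regularity of the exact solution and compatible discrete initial data.
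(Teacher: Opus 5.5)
Your overall architecture matches the paper's: the projection splitting with the $L^2+\rho H^1$ projection for $w$ and the $L^2$ projection for $z$, testing with $\bm e_u^{n+1}$, $\dot e_w^{n+1}$, $e_g^{n+1}$, then inf-sup for $p$ and $g$, then Gronwall. But the energy step does not close as you describe.

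\textbf{The multiplier terms do not cancel.} Testing the interface equation with $\lambda_h=e_g^{n+1}$ does not reduce the multiplier terms to $\eta_g$-terms, because the interface error equation is inhomogeneous. With $\sigma^{n+1}:=\partial_t w(t^{n+1})-\frac{w(t^{n+1})-w(t^n)}{\delta t}$ it reads
\[
\dual{(\bm e_u^{n+1})_3-\dot e_w^{n+1},\lambda_h}_{\Omega_p}=\dual{\sigma^{n+1}+\dot\eta_w^{n+1}-(\bm\eta_u^{n+1})_3,\lambda_h}_{\Omega_p}\quad\forall\lambda_h\in G_h .
\]
So the sum of the fluid and plate identities retains $\delta t\,\dual{\sigma^{n+1}+\dot\eta_w^{n+1}-(\bm\eta_u^{n+1})_3,\,e_g^{n+1}}_{\Omega_p}$.
\begin{itemize}
\item This pairs $O(\delta t+h^k)$ consistency errors with the discrete multiplier error itself, which your energy does not control.
\item No spatial projection can remove the time residual $\sigma^{n+1}$.
\item Hence Gronwall cannot yet be applied, and the order ``energy first, inf-sup afterwards'' fails. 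You need $\norm{e_g^{n+1}}_{-1/2,\Omega_p}$ inside the energy inequality, and its inf-sup bound contains $\norm{\dot{\bm e}_u^{n+1}}_{\Omega_f}$, the very quantity you postponed to the end.
\end{itemize}

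\textbf{How the paper handles these terms.} They are the paper's $R_4,R_{13},R_{14}$ (its $\chi$ is your $e$, its $\xi$ your $\eta$). The paper multiplies the squared inf-sup bound \eqref{eq:LM_pressure_bound} by $\alpha\sim(\delta t)^2$ and adds it to the energy identity. Then $7\rho_f^2\alpha(\delta t)^{-2}\norm{\bm\chi_u^{n+1}-\bm\chi_u^n}_{\Omega_f}^2$ is absorbed by the numerical dissipation $\frac{\rho_f}{2}\norm{\bm\chi_u^{n+1}-\bm\chi_u^n}_{\Omega_f}^2$. The $\epsilon_i\norm{\chi_g^{n+1}}_{-1/2,\Omega_p}^2$ pieces are absorbed by $\alpha\beta_{pf}^2\norm{\chi_g^{n+1}}_{-1/2,\Omega_p}^2$.

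That weight forces $\epsilon_1,\epsilon_3\sim\delta t$ and $\epsilon_2\sim(\delta t)^2$. This inflates terms like $\epsilon_2^{-1}\norm{\xi_w^{n+1}-\xi_w^n}_{1,\Omega_p}^2$ and costs powers of $\delta t$. Your time-differenced second estimate, giving $\delta t\sum_n\norm{\dot{\bm e}_u^{n+1}}_{\Omega_f}^2$, is the natural way to avoid this loss. However, it must be run jointly with the first estimate, with the multiplier residual that reappears there also handled (e.g.\ by summation by parts). It cannot simply be appended after the first estimate is ``closed''.

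\textbf{The Cauchy--Schwarz step is false.} Converting the $\ell^2$ sum to the $\sqrt{\delta t}\sum$ form does not work:
\[
\sum_{n=0}^{N-1}\sqrt{\delta t}\,a_n\le\sqrt N\,\bigl(\delta t\sum_n a_n^2\bigr)^{1/2},\qquad \sqrt N=\sqrt{T/\delta t},
\]
so you lose a factor $(\delta t)^{-1/2}$. No argument can recover it. For a fixed mesh, $\norm{\bm u(t^{n+1})-\bm u_h^{n+1}}_{1,\Omega_f}$ is bounded below, uniformly in $n$, by the $H^1$ best-approximation error of $\bm u$ from $\bm U_h$, which is generically positive. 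That sum therefore grows like $(\delta t)^{-1/2}$ as $\delta t\to 0$.

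What the energy method gives is a bound on $\bigl(\delta t\sum_n\norm{\bm u(t^{n+1})-\bm u_h^{n+1}}_{1,\Omega_f}^2\bigr)^{1/2}$. The $\delta t$-weighted pressure and multiplier sums are fine, since $\sum_n\delta t=T$. The paper's closing display \eqref{eq:total_error} does not deliver the $\sqrt{\delta t}\sum$ term either: it contains sums of $N$ terms of size $O(h_f^k)$.
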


\begin{proof}

We define the error function for the variable $\Phi\in\{\bm{u}, p, w, z, g, s\}$  at $t=t^{n+1}$
\begin{equation*}
    e_\Phi^{n+1} \coloneqq\Phi (t^{n+1})- \Phi_h^{n+1},
\end{equation*}
%where $\Phi^{n+1}$ is the time discrete approximation of $\Phi(t^{n+1})$, 
which can be decomposed into the discretization error (in space) $\chi_\Phi^{n+1}$ and the approximation error $\xi_\Phi^{n+1}$:
\begin{equation}\label{eq:trunc-approx}
    e_{\Phi}^{n+1} = \underbrace{(\Phi(t^{n+1}) - \Pi_\Phi \Phi(t^{n+1})}_{\coloneqq \xi_{\Phi}^{n+1}} + \underbrace{(\Pi_\Phi \Phi(t^{n+1}) - \Phi_{h}^{n+1}}_{\coloneqq \chi_\Phi^{n+1}}).
\end{equation}
In the remainder of this section, for any variable $\Phi\in\{\bm{u}, p, w, z, g\}$, we use $\Phi^{n+1}$ to denote  $\Phi(t^{n+1})$ for notational convenience; thus, $\Phi^{n+1}$ is not a variable in the semi-discrete system \eqref{eq:momentum_disc_weak}-\eqref{eq:interface_disc_weak} in the following error estimation. 
Define $\dot{\Phi}^{n+1}$, $\dot{\Phi}^{n+1}_h $similarly to \eqref{eq:back_euler} for 
$\Phi^{n+1}$ and $\Phi^{n+1}_h$, respectively, to obtain 
\begin{equation} \label{eq:backward_diff_error}
    \begin{aligned}
        \dot{\Phi}^{n+1} - \dot{\Phi}^{n+1}_h &= \frac{1}{\delta t}(\Phi^{n+1} - \Phi^{n}) - \frac{1}{\delta t}(\Phi^{n+1}_h - \Phi^{n}_h) \\
        &= \frac{1}{\delta t}(\Phi^{n+1}- \Phi^{n+1}_h) - \frac{1}{\delta t}(\Phi^{n}- \Phi^{n}_h)  \\
        &= \frac{1}{\delta t}(e_\Phi^{n+1} - e_\Phi^{n})\\
        &= \dot{e}_\Phi^{n+1}.
    \end{aligned}
\end{equation}
Subtracting \eqref{eq:momentum_full_disc_weak}-\eqref{eq:interface_full_disc_weak} from \eqref{eq:momentum_weak}-\eqref{eq:interface_weak} and restricting the test functions to come from the discrete FE spaces, we obtain
\begin{align} \label{eq:momentum_error}
    &\begin{aligned}
    &\rho_f\left(\partial_{t}\bm{u}(t^{n+1}) - \frac{\bm{u}^{n+1}_h-\bm{u}^{n}_h}{\delta t}, \bm{v}_h\right)_{\Omega_f} + \nu_f(\nabla \bm{e}_u^{n+1}, \nabla\bm{v}_h)_{\Omega_f} - (e_p^{n+1}, \nabla\cdot \bm{v}_h)_{\Omega_f} \\
    &\qquad + \dual{e_g^{n+1}, (\bm{v}_h)_3}_{\Omega_p} = 0 
    \end{aligned}
     &&\forall \bm{v}_h\in \bm{U}_h,\\
    &(\nabla\cdot \bm{e}_u^{n+1}, q_h)_{\Omega_f} = 0 &&\forall q_h\in Q_{h},\label{eq:continuity_error} \\
    &\begin{aligned}
    &\left(\partial_{tt}w(t^{n+1}) - \frac{w^{n+1}_h - 2w^{n}_h + w^{n-1}_h}{(\delta t)^2}, \eta_h\right)_{\Omega_p} \\
    &\qquad + \rho\left(\partial_{tt}z(t^{n+1}) - \frac{z^{n+1}_h - 2z^{n}_h + z^{n-1}_h}{(\delta t)^2}, \eta_h\right)_{\Omega_p} + (\nabla e_z^{n+1}, \nabla \eta_h)_{\Omega_p} \\
    &\qquad - \dual{e_g^{n+1}, \eta_h}_{\Omega_p} - (e_s^{n+1}, \eta_h)_{\Omega_p} = 0
    \end{aligned}
    &&\forall \eta_h\in W_h, \label{eq:plate_error}\\
    & (\nabla e_w^{n+1}, \nabla \varphi_h)_{\Omega_p} - (e_z^{n+1}, \varphi_h)_{\Omega_p} = 0 &&\forall \varphi_h\in W_h, \label{eq:poisson_error} \\
    & \dual{(\bm{e}_u^{n+1})_3, \lambda_h}_{\Omega_p} - \dual{\partial_t w(t^{n+1}) - \frac{w^{n+1}_h-w^n_h}{\delta t}, \lambda_h}_{\Omega_p} = 0 &&\forall \lambda_h \in G_h. \label{eq:interface_error}
\end{align}
Following \cite{Wang2025}, we can rewrite the errors involving the time derivatives as
\begin{equation} \label{eq:ddt_error}
    \begin{aligned}
        \partial_{t}\bm{u}(t^{n+1}) - \frac{\bm{u}^{n+1}_h-\bm{u}^{n}_h}{\delta t} &= \left(\partial_{t}\bm{u}(t^{n+1}) - \frac{\bm{u}^{n+1}-\bm{u}^{n}}{\delta t} \right) + \left(\frac{\bm{u}^{n+1}-\bm{u}^{n}}{\delta t} - \frac{\bm{u}^{n+1}_h-\bm{u}^{n}_h}{\delta t} \right)\\
        &= \tau_u^{n+1} + \frac{e_u^{n+1} - e_u^{n}}{\delta t},\\
        \partial_{tt}w(t^{n+1}) - \frac{w^{n+1}_h - 2w^{n}_h + w^{n-1}}{(\delta t)^2} &= \left(\partial_{tt}w(t^{n+1}) - \frac{w^{n+1} - 2w^{n} + w^{n-1}}{(\delta t)^2}\right)\\
        &\quad+ \left(\frac{w^{n+1} - 2w^{n} + w^{n-1}}{(\delta t)^2} - \frac{w^{n+1}_h - 2w^{n}_h + w^{n-1}_h}{(\delta t)^2}\right)\\
        &= \tau_w^{n+1} + \frac{\dot{e}_w^{n+1}-\dot{e}_w^n}{\delta t}.
    \end{aligned}
\end{equation}
This means we can rewrite \eqref{eq:momentum_error}-\eqref{eq:interface_error} to
\begin{align} \label{eq:momentum_error_2}
    &\begin{aligned}
    &\rho_f\left(\delta t\tau_u^{n+1} + \bm{e}_u^{n+1} - \bm{e}_u^{n}, \bm{v}_h\right)_{\Omega_f} + \nu_f\delta t(\nabla \bm{e}_u^{n+1}, \nabla\bm{v}_h)_{\Omega_f} - \delta t(e_p^{n+1}, \nabla\cdot \bm{v}_h)_{\Omega_f} \\
    &\qquad + \delta t\dual{e_g^{n+1}, (\bm{v}_h)_3}_{\Omega_p} = 0 
    \end{aligned}
     &&\forall \bm{v}_h\in \bm{U}_h,\\
    &(\nabla\cdot \bm{e}_u^{n+1}, q_h)_{\Omega_f} = 0 &&\forall q_h\in Q_{h},\label{eq:continuity_error_2} \\
    &\begin{aligned}
    &\left(\delta t\tau_w^{n+1} + \dot{e}_w^{n+1} - \dot{e}_w^{n}, \eta_h\right)_{\Omega_p} + \rho\left(\delta t\tau_z^{n+1} + \dot{e}_z^{n+1} - \dot{e}_z^{n}, \eta_h\right)_{\Omega_p} \\
    &\qquad + \delta t(\nabla e_z^{n+1}, \nabla \eta_h)_{\Omega_p} - \delta t\dual{e_g^{n+1}, \eta_h}_{\Omega_p} = 0
    \end{aligned}
    &&\forall \eta_h\in W_h, \label{eq:plate_error_2}\\
    & (\nabla e_w^{n+1}, \nabla \varphi_h)_{\Omega_p} - (e_z^{n+1}, \varphi_h)_{\Omega_p} = 0 &&\forall \varphi_h\in W_h, \label{eq:poisson_error_2} \\
    & \dual{(\bm{e}_u^{n+1})_3, \lambda_h}_{\Omega_p} - \dual{\tau_w^{n+1} + \dot{e}_w^{n+1}, \lambda_h}_{\Omega_p} = 0 &&\forall \lambda_h \in G_h. \label{eq:interface_error_2}
\end{align}
Note that by setting $\varphi_h = \eta_h$ in \eqref{eq:poisson_error}, then subtracting the equation with time $t^{n+1}$ to time $t^n$, and using \eqref{eq:backward_diff_error}, we can rewrite $\eqref{eq:plate_error}$ to
\begin{equation*}
    \begin{aligned}
        &(\dot{e}_w^{n+1}-\dot{e}_w^{n}, \eta_h)_{\Omega_p} + \rho(\nabla\dot{e}_w^{n+1}-\nabla\dot{e}_w^{n}, \nabla\eta_h)_{\Omega_p} +\delta t (\nabla e_z^{n+1}, \nabla \eta_h)_{\Omega_p} - \delta t\dual{e_g^{n+1}, \eta_h}_{\Omega_p} \\
        &\quad = -\delta t (\tau_w^{n+1}, \eta_h)_{\Omega_p} - \rho\delta t (\tau_z^{n+1}, \eta_h)_{\Omega_p},
    \end{aligned} \quad \forall \eta_h \in W_h.
\end{equation*}
Keeping the truncation error $\chi_\Phi$ terms defined in \eqref{eq:trunc-approx} on the left hand side, and transposing the approximation error $\xi_\Phi$ and the time discretization error $\tau_\Phi$ to the right hand side, we obtain
\begin{align} \label{eq:momentum_error_decomp}
    &\begin{aligned}
    &\rho_f(\bm{\chi}_u^{n+1}-\bm{\chi}_u^{n}, \bm{v}_h)_{\Omega_f} + \nu_f\delta t(\nabla \bm{\chi}_u^{n+1}, \nabla\bm{v}_h)_{\Omega_f} - \delta t(\chi_p^{n+1}, \nabla\cdot \bm{v}_h)_{\Omega_f} \\
    &\quad + \delta t\dual{\chi_g^{n+1}, (\bm{v}_h)_3}_{\Omega_p} = -\rho_f\delta t (\tau_u^{n+1}, \bm{v}_h)_{\Omega_f} - \rho_f(\bm{\xi}_u^{n+1}-\bm{\xi}_u^{n}, \bm{v}_h)_{\Omega_f} \\
    &\quad - \nu_f\delta t(\nabla \bm{\xi}_u^{n+1}, \nabla\bm{v}_h)_{\Omega_f} + \delta t(\xi_p^{n+1}, \nabla\cdot \bm{v}_h)_{\Omega_f} - \delta t\dual{\xi_g^{n+1}, (\bm{v}_h)_3}_{\Omega_p}
    \end{aligned}
     &&\forall \bm{v}_h\in \bm{U}_h,\\
    &(\nabla\cdot \bm{\chi}_u^{n+1}, q_h)_{\Omega_f} = -(\nabla\cdot \bm{\xi}_u^{n+1}, q_h)_{\Omega_f} &&\forall q_h\in Q_{h},\label{eq:continuity_error_decomp} \\
    &\begin{aligned}
    &(\dot{\chi}_w^{n+1}-\dot{\chi}_w^{n}, \eta_h)_{\Omega_p} + \rho(\nabla\dot{\chi}_w^{n+1}-\nabla\dot{\chi}_w^{n}, \nabla\eta_h)_{\Omega_p} +\delta t (\nabla \chi_z^{n+1}, \nabla \eta_h)_{\Omega_p} \\
    &\quad - \delta t\dual{\chi_g^{n+1}, \eta_h}_{\Omega_p} = -\delta t (\tau_w^{n+1}, \eta_h)_{\Omega_p} - \rho\delta t (\tau_z^{n+1}, \eta_h)_{\Omega_p} \\
    &\quad - (\dot{\xi}_w^{n+1}-\dot{\xi}_w^{n}, \eta_h)_{\Omega_p} - \rho(\nabla\dot{\xi}_w^{n+1}-\nabla\dot{\xi}_w^{n}, \nabla\eta_h)_{\Omega_p} - \delta t (\nabla \xi_z^{n+1}, \nabla \eta_h)_{\Omega_p} \\
    &\quad + \delta t\dual{\xi_g^{n+1}, \eta_h}_{\Omega_p}
    \end{aligned}
    &&\forall \eta_h\in W_h, \label{eq:plate_error_decomp}\\
    & (\nabla \chi_w^{n+1}, \nabla \varphi_h)_{\Omega_p} - (\chi_z^{n+1}, \varphi_h)_{\Omega_p} = -(\nabla \xi_w^{n+1}, \nabla \varphi_h)_{\Omega_p} + (\xi_z^{n+1}, \varphi_h)_{\Omega_p} &&\forall \varphi_h\in W_h, \label{eq:poisson_error_decomp} \\
    &\begin{aligned}
    & \dual{(\bm{\chi}_u^{n+1})_3, \lambda_h}_{\Omega_p} - \dual{\dot{\chi}_w^{n+1}, \lambda_h}_{\Omega_p} = \dual{\tau_w^{n+1}, \lambda_h}_{\Omega_p} + \dual{\dot{\xi}_w^{n+1}, \lambda_h}_{\Omega_p} \\
    &\quad - \dual{(\bm{\xi}_u^{n+1})_3, \lambda_h}_{\Omega_p}
    \end{aligned} &&\forall \lambda_h \in G_h. \label{eq:interface_error_decomp}
\end{align}
Setting $\bm{v}_h = \bm{\chi}_u^{n+1}$ in \eqref{eq:momentum_error_decomp} and $q_h = \chi_{p}^{n+1}$ in \eqref{eq:continuity_error_decomp}, we first see that by \eqref{eq:stokes-proj}, we have $ (\nabla\cdot \bm{\xi}_u^{n+1}, \chi_p^{n+1})_{\Omega_f} = 0$ and so, $(\nabla\cdot \bm{\chi}_u^{n+1}, \chi_p^{n+1})_{\Omega_f} = 0$ from \eqref{eq:continuity_error_decomp}. By using the identity $(a - b,a)_{\gamma} = \frac{1}{2}(\norm{a}_\gamma^2 - \norm{b}_\gamma^2 + \norm{a-b}_\gamma^2)$, we have
\begin{equation} \label{eq:fluid_error}
    \begin{aligned}
        &\frac{\rho_f}{2}\left(\norm{\bm{\chi}_u^{n+1}}_{\Omega_f}^2-\norm{\bm{\chi}_u^{n}}^2_{\Omega_f} + \norm{\bm{\chi}_u^{n+1} - \bm{\chi}_u^{n}}_{\Omega_f}^2 \right)  + \nu_f\delta t\norm{\nabla \bm{\chi}_u^{n+1}}_{\Omega_f}^2 + \delta t\dual{\chi_g^{n+1}, (\bm{\chi}_u^{n+1})_3}_{\Omega_p} \\
        &\quad = - \rho_f\delta t (\tau_u^{n+1}, \bm{\chi}_u^{n+1})_{\Omega_f} - \rho_f(\bm{\xi}_u^{n+1}-\bm{\xi}_u^{n}, \bm{\chi}_u^{n+1})_{\Omega_f} - \nu_f\delta t(\nabla \bm{\xi}_u^{n+1}, \nabla\bm{\chi}_u^{n+1})_{\Omega_f}\\
        &\quad + \delta t(\xi_p^{n+1}, \nabla\cdot \bm{\chi}_u^{n+1})_{\Omega_f} - \delta t\dual{\xi_g^{n+1}, (\bm{\chi}_u^{n+1})_3}_{\Omega_p}.
    \end{aligned}
\end{equation}
Now, setting $\eta_h = \dot{\chi}_w^{n+1}$ in \eqref{eq:plate_error_decomp}, we obtain
\begin{equation} \label{eq:plate_error_3}
    \begin{aligned}
    &\frac{1}{2}\left(\norm{\dot{\chi}_w^{n+1}}_{\Omega_p}^2-\norm{\dot{\chi}_w^{n}}_{\Omega_p}^2 + \norm{\dot{\chi}_w^{n+1} - \dot{\chi}_w^{n}}_{\Omega_p}^2\right) + \frac{\rho}{2}\left(\norm{\nabla\dot{\chi}_w^{n+1}}_{\Omega_p}^2-\norm{\nabla\dot{\chi}_w^{n}}_{\Omega_p}^2 + \norm{\nabla\dot{\chi}_w^{n+1} - \nabla\dot{\chi}_w^{n}}_{\Omega_p}^2\right) \\
    &\quad +\delta t (\nabla \chi_z^{n+1}, \nabla \dot{\chi}_w^{n+1})_{\Omega_p} - \delta t\dual{\chi_g^{n+1}, \dot{\chi}_w^{n+1}}_{\Omega_p} = -\delta t (\tau_w^{n+1}, \dot{\chi}_w^{n+1})_{\Omega_p} - \rho\delta t (\tau_z^{n+1}, \dot{\chi}_w^{n+1})_{\Omega_p} \\
    &\quad - (\dot{\xi}_w^{n+1}-\dot{\xi}_w^{n}, \dot{\chi}_w^{n+1})_{\Omega_p} - \rho(\nabla\dot{\xi}_w^{n+1}-\nabla\dot{\xi}_w^{n}, \nabla\dot{\chi}_w^{n+1})_{\Omega_p} - \delta t (\nabla \xi_z^{n+1}, \nabla \dot{\chi}_w^{n+1})_{\Omega_p} + \delta t\dual{\xi_g^{n+1}, \dot{\chi}_w^{n+1}}_{\Omega_p}.
    \end{aligned}
\end{equation}
Note that if we set $\varphi_h = \chi_z^{n+1}$ for both time instants $t^{n+1}$ and $t^{n}$, we have
\begin{equation*}
    \begin{aligned}
        (\nabla\chi_w^{n+1}, \nabla\chi_z^{n+1})_{\Omega_p} - \norm{\chi_z^{n+1}}^2_{\Omega_p} &= -(\nabla\xi_w^{n+1}, \nabla\chi_z^{n+1})_{\Omega_p} + (\xi_z^{n+1}, \chi_z^{n+1})_{\Omega_p},\\
        (\nabla\chi_w^{n}, \nabla\chi_z^{n+1})_{\Omega_p} - (\chi_z^{n}, \chi_z^{n+1})_{\Omega_p} &= -(\nabla\xi_w^{n}, \nabla\chi_z^{n+1})_{\Omega_p} + (\xi_z^{n}, \chi_z^{n+1})_{\Omega_p}.
    \end{aligned}
\end{equation*}
Subtracting these two equations and using \eqref{eq:backward_diff_error} give
\begin{equation*}
    \delta t (\nabla\dot{\chi}_w^{n+1}, \nabla\chi_z^{n+1})_{\Omega_p} = \norm{\chi_z^{n+1}}^2_{\Omega_p} - (\chi_z^{n}, \chi_z^{n+1})_{\Omega_p} - \delta t(\nabla\dot{\xi}_w^{n+1}, \nabla\chi_z^{n+1})_{\Omega_p} + \delta t(\dot{\xi}_z^{n+1}, \chi_z^{n+1})_{\Omega_p}.
\end{equation*}
Hence, we have
\begin{equation} \label{eq:plate_error_4}
    \begin{aligned}
    &\frac{1}{2}\left(\norm{\dot{\chi}_w^{n+1}}_{\Omega_p}^2-\norm{\dot{\chi}_w^{n}}_{\Omega_p}^2 + \norm{\dot{\chi}_w^{n+1} - \dot{\chi}_w^{n}}_{\Omega_p}^2\right) + \frac{\rho}{2}\left(\norm{\nabla\dot{\chi}_w^{n+1}}_{\Omega_p}^2-\norm{\nabla\dot{\chi}_w^{n}}_{\Omega_p}^2 + \norm{\nabla\dot{\chi}_w^{n+1} - \nabla\dot{\chi}_w^{n}}_{\Omega_p}^2\right) \\
    &\quad + \norm{\chi_z^{n+1}}_{\Omega_p}^2 - \delta t\dual{\chi_g^{n+1}, \dot{\chi}_w^{n+1}}_{\Omega_p} = -\delta t (\tau_w^{n+1}, \dot{\chi}_w^{n+1})_{\Omega_p} - \rho\delta t (\tau_z^{n+1}, \dot{\chi}_w^{n+1})_{\Omega_p} \\
    &\quad - (\dot{\xi}_w^{n+1}-\dot{\xi}_w^{n}, \dot{\chi}_w^{n+1})_{\Omega_p} - \rho(\nabla\dot{\xi}_w^{n+1}-\nabla\dot{\xi}_w^{n}, \nabla \dot{\chi}_w^{n+1})_{\Omega_p} - \delta t (\nabla \xi_z^{n+1}, \nabla \dot{\chi}_w^{n+1})_{\Omega_p} \\
    &\quad + \delta t\dual{\xi_g^{n+1}, \dot{\chi}_w^{n+1}}_{\Omega_p} + (\chi_z^{n}, \chi_z^{n+1})_{\Omega_p} + \delta t(\nabla\dot{\xi}_w^{n+1}, \nabla\chi_z^{n+1})_{\Omega_p} - \delta t(\dot{\xi}_z^{n+1}, \chi_z^{n+1})_{\Omega_p}.
    \end{aligned}
\end{equation}
Setting $\lambda_h = \chi_g^{n+1}$ in \eqref{eq:interface_error_decomp} and multiplying by -$\delta t$ give
\begin{equation} \label{eq:interface_error_2}
    \begin{aligned}
        \delta t\dual{\dot{\chi}_w^{n+1}, \chi_g^{n+1}}_{\Omega_p} - \delta t\dual{(\bm{\chi}_u^{n+1})_3, \chi_g^{n+1}}_{\Omega_p} &= -\delta t\dual{\tau_w^{n+1}, \chi_g^{n+1}}_{\Omega_p} - \delta t\dual{\dot{\xi}_w^{n+1}, \chi_g^{n+1}}_{\Omega_p} \\
        & \quad + \delta t\dual{(\bm{\xi}_u^{n+1})_3, \chi_g^{n+1}}_{\Omega_p}.
    \end{aligned}
\end{equation}
Hence, adding \eqref{eq:fluid_error}, \eqref{eq:plate_error_4}, and \eqref{eq:interface_error_2} results to
\begin{equation}\label{eq:disc_total_err-0}
    \begin{aligned} 
        &\frac{\rho_f}{2}\left(\norm{\bm{\chi}_u^{n+1}}_{\Omega_f}^2-\norm{\bm{\chi}_u^{n}}^2_{\Omega_f} + \norm{\bm{\chi}_u^{n+1} - \bm{\chi}_u^{n}}_{\Omega_f}^2 \right)  + \nu_f\delta t\norm{\nabla \bm{\chi}_u^{n+1}}_{\Omega_f}^2\\
        & + \frac{1}{2}\left(\norm{\dot{\chi}_w^{n+1}}_{\Omega_p}^2-\norm{\dot{\chi}_w^{n}}_{\Omega_p}^2 + \norm{\dot{\chi}_w^{n+1} - \dot{\chi}_w^{n}}_{\Omega_p}^2\right) + \frac{\rho}{2}\left(\norm{\nabla\dot{\chi}_w^{n+1}}_{\Omega_p}^2-\norm{\nabla\dot{\chi}_w^{n}}_{\Omega_p}^2 + \norm{\nabla\dot{\chi}_w^{n+1} - \nabla\dot{\chi}_w^{n}}_{\Omega_p}^2\right)\\
        & + \norm{\chi_z^{n+1}}_{\Omega_p}^2  = -\rho_f\delta t (\tau_u^{n+1}, \bm{\chi}_u^{n+1})_{\Omega_f} - \delta t (\tau_w^{n+1}, \dot{\chi}_w^{n+1})_{\Omega_p} - \rho\delta t (\tau_z^{n+1},\dot{\chi}_w^{n+1}) -\delta t\dual{\tau_w^{n+1}, \chi_g^{n+1}}_{\Omega_p} \\
        & - \rho_f(\bm{\xi}_u^{n+1}-\bm{\xi}_u^{n}, \bm{\chi}_u^{n+1})_{\Omega_f} - \nu_f\delta t(\nabla \bm{\xi}_u^{n+1}, \nabla\bm{\chi}_u^{n+1})_{\Omega_f} + \delta t(\xi_p^{n+1}, \nabla\cdot \bm{\chi}_u^{n+1})_{\Omega_f} - \delta t\dual{\xi_g^{n+1}, (\bm{\chi}_u^{n+1})_3}_{\Omega_p} \\
        & - (\dot{\xi}_w^{n+1}-\dot{\xi}_w^{n}, \dot{\chi}_w^{n+1})_{\Omega_p} - \rho(\nabla\dot{\xi}_w^{n+1}-\nabla\dot{\xi}_w^{n}, \nabla\dot{\chi}_w^{n+1})_{\Omega_p} - \delta t (\nabla \xi_z^{n+1}, \nabla \dot{\chi}_w^{n+1})_{\Omega_p} + \delta t\dual{\xi_g^{n+1}, \dot{\chi}_w^{n+1}}_{\Omega_p} \\
        & + (\chi_z^{n}, \chi_z^{n+1})_{\Omega_p} + \delta t(\nabla\dot{\xi}_w^{n+1}, \nabla\chi_z^{n+1})_{\Omega_p} - \delta t(\dot{\xi}_z^{n+1}, \chi_z^{n+1})_{\Omega_p} - \delta t\dual{\dot{\xi}_w^{n+1}, \chi_g^{n+1}}_{\Omega_p} \\
        & + \delta t\dual{(\bm{\xi}_u^{n+1})_3, \chi_g^{n+1}}_{\Omega_p}.
    \end{aligned}
\end{equation}
Due to \eqref{eq:l2-proj}, $(\dot{\xi}_w^{n+1}-\dot{\xi}_w^{n}, \dot{\chi}_w^{n+1})_{\Omega_p} + \rho(\nabla\dot{\xi}_w^{n+1}-\nabla\dot{\xi}_w^{n}, \nabla\dot{\chi}_w^{n+1})_{\Omega_p} = 0$, $(\nabla\dot{\xi}_w^{n+1}, \nabla\chi_z^{n+1})_{\Omega_p}=-\frac{1}{\rho}(\dot{\xi}_w, \chi_z^{n+1})_{\Omega_p}$, and $\delta t(\dot{\xi}_z^{n+1}, \chi_z^{n+1})_{\Omega_p} = 0$. Hence, we can rewrite \eqref{eq:disc_total_err-0} as 
\begin{equation}\label{eq:disc_total_err}
    \begin{aligned} 
        &\frac{\rho_f}{2}\left(\norm{\bm{\chi}_u^{n+1}}_{\Omega_f}^2-\norm{\bm{\chi}_u^{n}}^2_{\Omega_f} + \norm{\bm{\chi}_u^{n+1} - \bm{\chi}_u^{n}}_{\Omega_f}^2 \right)  + \nu_f\delta t\norm{\nabla \bm{\chi}_u^{n+1}}_{\Omega_f}^2\\
        & + \frac{1}{2}\left(\norm{\dot{\chi}_w^{n+1}}_{\Omega_p}^2-\norm{\dot{\chi}_w^{n}}_{\Omega_p}^2 + \norm{\dot{\chi}_w^{n+1} - \dot{\chi}_w^{n}}_{\Omega_p}^2\right) + \frac{\rho}{2}\left(\norm{\nabla\dot{\chi}_w^{n+1}}_{\Omega_p}^2-\norm{\nabla\dot{\chi}_w^{n}}_{\Omega_p}^2 + \norm{\nabla\dot{\chi}_w^{n+1} - \nabla\dot{\chi}_w^{n}}_{\Omega_p}^2\right)\\
        & + \norm{\chi_z^{n+1}}_{\Omega_p}^2 = \sum_{k = 1}^{14} R_{k}^{n+1}.
    \end{aligned}
\end{equation}

Using \eqref{eq:back_euler}, Cauchy-Schwarz inequality, Young's inequality, Trace inequality, Poincar\'{e}-Friedrich's inequality, and Sobolev inequalities, we have the following:
\begin{equation*}
    \begin{aligned}
        &R_1^{n+1} = -\rho_f\delta t (\tau_u^{n+1}, \bm{\chi}_u^{n+1})_{\Omega_f} \leq C\delta t\norm{\tau_u^{n+1}}_{\Omega_f}^2 + \frac{\nu_f\delta t}{8}\norm{\nabla\bm{\chi}_u^{n+1}}_{\Omega_f}^2, \\
        &R_2^{n+1} = -\delta t(\tau_w^{n+1}, \dot{\chi}_w^{n+1})_{1, \Omega_p} \leq C\delta t\norm{\tau_w^{n+1}}^2_{\Omega_p} + \frac{\delta t}{8}\norm{\dot{\chi}_w^{n+1}}_{\Omega_p}^2, \\
        &R_3^{n+1} = - \rho\delta t (\tau_z^{n+1},\dot{\chi}_w^{n+1})_{\Omega_p} \leq C\delta t\norm{\tau_z^{n+1}}_{\Omega_p}^2 + \frac{\delta t}{4}\norm{\dot{\chi}_w^{n+1}}_{\Omega_p}^2, \\
        &R_4^{n+1} = -\delta t\dual{\tau_w^{n+1}, \chi_g^{n+1}}_{\Omega_p} \leq \frac{\delta t}{2\epsilon_1}\norm{\tau_w^{n+1}}_{1,\Omega_p}^2 + \frac{\delta t\epsilon_1}{2}\norm{\chi_g^{n+1}}_{-1/2\Omega_p}^2, \\
        &R_5^{n+1} = -\rho_f(\bm{\xi}_u^{n+1}-\bm{\xi}_u^{n}, \bm{\chi}_u^{n+1})_{\Omega_f} \leq \frac{C}{\delta t}\norm{\bm{\xi}_u^{n+1}-\bm{\xi}_u^{n}}_{\Omega_f}^2 + \frac{\nu_f\delta t}{32}\norm{\nabla\bm{\chi}_u^{n+1}}_{\Omega_f}^2,\\
        &R_6^{n+1} = -\nu_f\delta t(\nabla \bm{\xi}_u^{n+1}, \nabla\bm{\chi}_u^{n+1})_{\Omega_f} \leq C\delta t\norm{\nabla\bm{\xi}_u^{n+1}}_{\Omega_f}^2 + \frac{\nu_f\delta t}{32}\norm{\nabla\bm{\chi}_u^{n+1}}_{\Omega_f}^2,\\
        &R_7^{n+1} = \delta t(\xi_p^{n+1}, \nabla\cdot \bm{\chi}_u^{n+1})_{\Omega_f} \leq C\delta t\norm{\xi_p^{n+1}}_{\Omega_f}^2 + \frac{\nu_f\delta t}{32}\norm{\nabla\bm{\chi}_u^{n+1}}_{\Omega_f}^2,\\
        &R_8^{n+1} = - \delta t\dual{\xi_g^{n+1}, (\bm{\chi}_u^{n+1})_3}_{\Omega_p} \leq C\delta t\norm{\xi_g^{n+1}}_{\Omega_p}^2 + \frac{\nu_f\delta t}{32}\norm{\nabla\bm{\chi}_u^{n+1}}_{\Omega_f}^2,\\
        &R_{9}^{n+1} = - \delta t (\nabla \xi_z^{n+1}, \nabla \dot{\chi}_w^{n+1})_{\Omega_p} \leq C\delta t\norm{\nabla \xi_z^{n+1}}_{\Omega_p}^2 + \frac{\rho\delta t}{4}\norm{\nabla \dot{\chi}_w^{n+1}}_{\Omega_p}^2,\\
        &R_{10}^{n+1} = - \delta t\dual{\xi_g^{n+1}, \dot{\chi}_w^{n+1}}_{\Omega_p}  \leq C\delta t\norm{\xi_g^{n+1}}_{\Omega_p}^2 + \frac{\rho\delta t}{4}\norm{\nabla\dot{\chi}_w^{n+1}}_{\Omega_p}^2,\\
        &R_{11}^{n+1} = (\chi_z^{n}, \chi_z^{n+1})_{\Omega_p} \leq \frac{1}{2}\norm{\chi_z^{n}}_{\Omega_p}^2 + \frac{1}{2}\norm{\chi_z^{n+1}}_{\Omega_p}^2, \\
        &R_{12}^{n+1} = \delta t(\nabla\dot{\xi}_w^{n+1}, \nabla\chi_z^{n+1})_{\Omega_p} = -\frac{\delta t}{\rho}(\dot{\xi}_w^{n+1}, \chi_z^{n+1})_{\Omega_p} \leq \frac{C}{\delta t}\norm{\xi_w^{n+1} - \xi_w^{n}}_{\Omega_p}^2 + \frac{\delta t}{2}\norm{\chi_z^{n+1}}_{\Omega_p}^2 \\
        &R_{13}^{n+1} = - \delta t\dual{\dot{\xi}_w^{n+1}, \chi_g^{n+1}}_{\Omega_p} = - \dual{\xi_w^{n+1} - \xi_w^{n}, \chi_g^{n+1}}_{\Omega_p} \leq \frac{1}{2\epsilon_2}\norm{\xi_w^{n+1} - \xi_w^{n}}_{1,\Omega_p}^2 + \frac{\epsilon_2}{2}\norm{\chi_g^{n+1}}_{-1/2,\Omega_p}^2,\\
        &R_{14}^{n+1} = \delta t\dual{(\bm{\xi}_u^{n+1})_3, \chi_g^{n+1}}_{\Omega_p} \leq \frac{\delta t}{2\epsilon_3}\norm{\nabla\bm{\xi}_{u}^{n+1}}_{\Omega_f}^2 + \frac{\epsilon_3\delta t}{2}\norm{\chi_g^{n+1}}_{-1/2,\Omega_p}^2.
    \end{aligned}
\end{equation*}
where we use $C>0$ as a generic constant independent of the mesh sizes $h_f$ and $h_p$, and the time step $\delta t$. Before continuing with the result above, we develop estimates for $\norm{\chi_p^{n+1}}_{\Omega_f}$ and $\norm{\chi_g^{n+1}}_{-1/2, \Omega_p}$. From the discrete inf-sup conditions \eqref{eq:disc_infsup_Stokes} and \eqref{infsup_LM_Stokes}, and inequalities \eqref{V2_Prop}-\eqref{V21_Prop}, we have
%{\color{red} TTHIS IS NOT TRUE. The inf-sup conditioin for (p,g) and v follows from the argument on p17 after (95), ignoring the $\varphi$ part. I think it should be
\begin{equation*}
\beta_{pf} \left( \norm{\chi_p^{n+1}}_{\Omega_f} +  \norm{\chi_g^{n+1}}_{-1/2, \Omega_p} \right) \leq \sup_{\bm{v}_h\in \bm{U}_h} \frac{-(\nabla\cdot\bm{v}_h, \chi_p^{n+1})_{\Omega_f} + \dual{\chi_g^{n+1}, (\bm{v_h})_3}_{\Omega_p}}{\norm{\bm{v}_h}_{1,\Omega_f}},
\end{equation*}
where $\beta_{pf} = \frac{\min\{\beta^*, \beta_f\}}{2+ \frac{\sqrt{3}}{\beta^*}}$. %Please check. You can say that this inf-sup is easily obtain from the step (add the equation number there) of the proof.

Using \eqref{eq:momentum_error_decomp}, Cauchy-Schwarz inequality and the fact that $\norm{\cdot}_{\gamma} \leq \norm{\cdot}_{1, \gamma}$, we have
\begin{equation*} 
    \begin{aligned}
        \beta_{pf}\left( \norm{\chi_p^{n+1}}_{\Omega_f} + \norm{\chi_g^{n+1}}_{-1/2, \Omega_p} \right) &\leq \frac{\rho_f}{\delta t}\norm{\bm{\chi}_u^{n+1} - \bm{\chi}_u^{n}}_{\Omega_f} + \nu_f\norm{\nabla\bm{\chi}_u^{n+1
        }}_{\Omega_f} +  \rho_f\norm{\tau_u^{n+1}}_{\Omega_f} \\
        & + \frac{\rho_f}{\delta t}\norm{\bm{\xi}_u^{n+1} - \bm{\xi}_u^{n}}_{\Omega_f} + \nu_f\norm{\nabla\bm{\xi}_u^{n+1}}_{\Omega_f} + \norm{\xi_p^{n+1}}_{\Omega_f} + \norm{\xi_g^{n+1}}_{\Omega_p}.
    \end{aligned}
\end{equation*}
Hence, using the fact that if $\abs{a_1 + a_2} \leq \abs{b_1 + b_2 + \cdots + b_n}$ and $a_1, a_2>0$ , then $a_1^2 + a_2^2 \leq (a_1 + a_2)^2 \leq (b_1 + b_2 + \cdots + b_n)^2 \leq n(b_1^2 + b_2^2 + \cdots + b_n^2)$, we obtain
\begin{equation} \label{eq:LM_pressure_bound}
    \begin{aligned}
        \beta_{pf}^2\left( \norm{\chi_p^{n+1}}_{\Omega_f}^2 + \norm{\chi_g^{n+1}}_{-1/2, \Omega_p}^2\right) &\leq \frac{7\rho_f^2}{(\delta t)^2}\norm{\bm{\chi}_u^{n+1} - \bm{\chi}_u^{n}}_{\Omega_f}^2 + 7\nu_f^2\norm{\nabla\bm{\chi}_u^{n+1
        }}_{\Omega_f}^2 \\
        &\quad + 7\rho_f^2\norm{\tau_u^{n+1}}_{\Omega_f}^2 + \frac{7\rho_f^2}{(\delta t)^2}\norm{\bm{\xi}_u^{n+1} - \bm{\xi}_u^{n}}_{\Omega_f}^2 \\
        &\quad + 7\nu_f^2\norm{\nabla\bm{\xi}_u^{n+1}}_{\Omega_f}^2 + 7\norm{\xi_p^{n+1}}_{\Omega_f}^2 + 7\norm{\xi_g^{n+1}}_{\Omega_p}^2.
    \end{aligned}
\end{equation}

We multiply \eqref{eq:LM_pressure_bound} by $\alpha>0$ which will be defined later, and add it to \eqref{eq:disc_total_err}. 
\begin{equation}\label{eq:disc_total_err_2}
    \begin{aligned} 
        &\frac{\rho_f}{2}\left(\norm{\bm{\chi}_u^{n+1}}_{\Omega_f}^2-\norm{\bm{\chi}_u^{n}}^2_{\Omega_f} + \norm{\bm{\chi}_u^{n+1} - \bm{\chi}_u^{n}}_{\Omega_f}^2 \right)  + \nu_f\delta t\norm{\nabla \bm{\chi}_u^{n+1}}_{\Omega_f}^2\\
        & + \frac{1}{2}\left(\norm{\dot{\chi}_w^{n+1}}_{\Omega_p}^2-\norm{\dot{\chi}_w^{n}}_{\Omega_p}^2 + \norm{\dot{\chi}_w^{n+1} - \dot{\chi}_w^{n}}_{\Omega_p}^2\right) + \frac{\rho}{2}\left(\norm{\nabla\dot{\chi}_w^{n+1}}_{\Omega_p}^2-\norm{\nabla\dot{\chi}_w^{n}}_{\Omega_p}^2 + \norm{\nabla\dot{\chi}_w^{n+1} - \nabla\dot{\chi}_w^{n}}_{\Omega_p}^2\right)\\
        & + \frac{1}{2}\left(\norm{\chi_z^{n+1}}_{\Omega_p}^2-\norm{\chi_z^{n}}_{\Omega_p}^2\right) + \alpha\beta_{pf}^2 \norm{\chi_p^{n+1}}_{\Omega_f}^2 + \alpha\beta_{pf}^2 \norm{\chi_g^{n+1}}_{-1/2, \Omega_p}^2 \\
        &\quad \leq \frac{7\rho_f^2\alpha}{(\delta t)^2}\norm{\bm{\chi}_u^{n+1} - \bm{\chi}_u^{n}}_{\Omega_f}^2 + \left(7\alpha\nu_f^2 + \frac{\nu_f\delta t}{4}\right)\norm{\nabla\bm{\chi}_u^{n+1
        }}_{\Omega_f}^2 + \frac{\delta t}{2}\norm{\dot{\chi}_w^{n+1}}_{\Omega_p}^2 + \frac{\rho\delta t}{2}\norm{\nabla\dot{\chi}_w^{n+1}}_{\Omega_p}^2 \\
        & \qquad + \frac{\delta t}{2}\norm{\chi_z^{n+1}}_{\Omega_p}^2 + \left(\frac{\epsilon_1\delta t+\epsilon_2+\epsilon_3\delta t}{2}\right)\norm{\chi_g^{n+1}}_{-1/2,\Omega_p}^2 + \left(C\delta t +  7\alpha\rho_f^2\right)\norm{\tau_u^{n+1}}_{\Omega_f}^2 \\
        &\qquad  + \left(C\delta t + \frac{\delta t}{2\epsilon_1}\right)\norm{\tau_w^{n+1}}_{1, \Omega_p}^2 + C\delta t\norm{\tau_z^{n+1}}_{\Omega_p}^2 + \left(\frac{C}{\delta t} + \frac{7\rho_f^2\alpha}{(\delta t)^2}\right)\norm{\bm{\xi}_u^{n+1} - \bm{\xi}_u^{n}}_{\Omega_f}^2 \\
        &\qquad + \left(C\delta t + \frac{\delta t}{2\epsilon_3} + 7\alpha\nu_f^2\right)\norm{\nabla\bm{\xi}_{u}^{n+1}}_{\Omega_f}^2  + \left(C\delta t + 7\alpha\right)\norm{\xi_p^{n+1}}_{\Omega_f}^2 + \left(2C\delta t + 7\alpha\right)\norm{\xi_g^{n+1}}_{\Omega_p}^2 \\
        &\qquad + C\delta t\norm{\nabla\xi_z^{n+1}}_{\Omega_p}^2 + \left(\frac{C}{\delta t} + \frac{1}{2\epsilon_2}\right)\norm{\xi_w^{n+1} - \xi_w^{n}}_{1,\Omega_p}^2.
    \end{aligned}
\end{equation}
We define the following positive constants:
\begin{align*}
    C_1 = \frac{\rho_f}{2} - \frac{7\rho_f^2\alpha}{(\delta t)^2} > 0, \quad C_2 = \frac{3\nu_f}{4} - \frac{7\alpha\nu_f^2}{\delta t} > 0, \quad \overline{\alpha} = \alpha\beta_{pf}^2 - \left(\frac{\epsilon_1\delta t + \epsilon_2 + \epsilon_3\delta t}{3}\right) > 0.
\end{align*}
To ensure their positivity, we set $\alpha = \frac{1}{2}\min\left\{\frac{(\delta t)^2}{14\rho_f}, \frac{3\delta t}{28\nu_f}\right\} \sim (\delta t)^2$ if $\delta t \ll 1$, $\epsilon_1 = \epsilon_3 = \frac{\alpha\beta_{pf}^2}{2\delta t} \sim \delta t$, and $\epsilon_2 = \delta t\epsilon_1 \sim (\delta t)^2$. 
Note that $\overline{\alpha}\sim (\delta t)^2$. The constants $\alpha$ and $\overline{\alpha}$ will appear in the final error estimate and affect the convergence rate in time. 

From \eqref{eq:stokes_proj_bound}, we have
\begin{equation*}
    \begin{aligned}
        \norm{\bm{\xi}_u^{n+1} - \bm{\xi}_u^{n}}_{\Omega_f} &= \norm{\left(\bm{u}(t^{n+1}) - \Pi_u \bm{u}(t^{n+1})\right) - \left(\bm{u}(t^{n}) - \Pi_u \bm{u}(t^{n})\right)}_{\Omega_f} \\
        &= \norm{\left(\bm{u}(t^{n+1}) - \bm{u}(t^{n})\right) - \Pi_u\left(\bm{u}(t^{n+1}) - \bm{u}(t^{n})\right)}_{\Omega_f} \\
        &\leq h_f^{k}\norm{\bm{u}(t^{n+1}) - \bm{u}(t^{n})}_{k+1, \Omega_f}.
    \end{aligned}
\end{equation*}
Moreover, from the Fundamental Theorem of Calculus, we obtain
\begin{equation*}
    \bm{u}(t^{n+1}) - \bm{u}(t^{n}) = \int_{t^n}^{t^{n+1}}\partial_t\bm{u}(s)\, ds.
\end{equation*}
Using Bochner's theorem (see e.g., \cite[Appendix~E]{Evans2010}) gives
\begin{equation}\label{eq:bochner_u}
    \begin{aligned}
        \norm{\bm{\xi}_u^{n+1} - \bm{\xi}_u^{n}}_{\Omega_f} &\leq h_f^{k}\norm{\bm{u}(t^{n+1}) - \bm{u}(t^{n})}_{k+1, \Omega_f} \\
        & \leq h_f^{k}\int_{t^n}^{t^{n+1}} \norm{\partial_t\bm{u}(s)}_{k+1, \Omega_f}\, ds \\
        &\leq h_f^{k}\delta t \sup_{[t^n, t^{n+1}]}\norm{\partial_t\bm{u}}_{k+1, \Omega_f}.
    \end{aligned}
\end{equation}
We apply the same steps to obtain the inequality
\begin{equation} \label{eq:bochner_w}
    \norm{\xi_w^{n+1} - \xi_w^n}_{1, \Omega_p} \leq h_p^k~\delta t\sup_{[t^n, t^{n+1}]}\norm{\partial_t w}_{k+1, \Omega_p}.
\end{equation}
From \eqref{eq:taylor_bound}, \eqref{eq:stokes_proj_bound}, \eqref{eq:l2_proj_bound}, and \eqref{eq:disc_total_err_2}-\eqref{eq:bochner_w}, we obtain
\begin{equation}\label{eq:disc_total_err_3}
    \begin{aligned} 
        &\frac{\rho_f}{2}\left(\norm{\bm{\chi}_u^{n+1}}_{\Omega_f}^2-\norm{\bm{\chi}_u^{n}}^2_{\Omega_f}\right) + C_1\norm{\bm{\chi}_u^{n+1} - \bm{\chi}_u^{n}}_{\Omega_f}^2   + C_2\delta t\norm{\nabla \bm{\chi}_u^{n+1}}_{\Omega_f}^2\\
        & + \frac{1}{2}\left(\norm{\dot{\chi}_w^{n+1}}_{\Omega_p}^2-\norm{\dot{\chi}_w^{n}}_{\Omega_p}^2 + \norm{\dot{\chi}_w^{n+1} - \dot{\chi}_w^{n}}_{\Omega_p}^2\right) + \frac{\rho}{2}\left(\norm{\nabla\dot{\chi}_w^{n+1}}_{\Omega_p}^2-\norm{\nabla\dot{\chi}_w^{n}}_{\Omega_p}^2 + \norm{\nabla\dot{\chi}_w^{n+1} - \nabla\dot{\chi}_w^{n}}_{\Omega_p}^2\right)\\
        & + \frac{1}{2}\left(\norm{\chi_z^{n+1}}_{\Omega_p}^2-\norm{\chi_z^{n}}_{\Omega_p}^2\right) + \alpha\beta_{pf}^2 \norm{\chi_p^{n+1}}_{\Omega_f}^2 + \overline{\alpha} \norm{\chi_g^{n+1}}_{-1/2, \Omega_p}^2 \\
        &\quad \leq \frac{\delta t}{2}\norm{\dot{\chi}_w^{n+1}}_{\Omega_p}^2 + \frac{\rho\delta t}{2}\norm{\nabla\dot{\chi}_w^{n+1}}_{\Omega_p}^2 + \frac{\delta t}{2}\norm{\chi_z^{n+1}}_{\Omega_p}^2 + C\delta t\left((\delta t)^2\norm{\partial_{tt}\bm{u}(\theta_u^{n+1})}_{\Omega_f}^2 \right. \\
        & \qquad + \delta t\norm{\partial_{ttt}w(\theta_w^{n+1})}_{1,\Omega_p}^2 + (\delta t)^2\norm{\partial_{ttt}z(\theta_z^{n+1})}_{\Omega_p}^2 + h_f^{2k}\sup_{[t^n, t^{n+1}]}\norm{\partial_t\bm{u}}_{k+1, \Omega_f}^2 \\
        &\qquad + \frac{h_f^{2k}}{\delta t}\norm{\bm{u}(t^{n+1})}_{k+1, \Omega_f}^2 + h_f^{2k}\norm{p(t^{n+1})}_{k, \Omega_f)}^2 + h_p^{2k}\norm{g(t^{n+1})}_{k, \Omega_p}^2 + h_p^{2k}\norm{z(t^{n+1})}_{k+1, \Omega_p}^2 \\
        & \qquad \left. + \frac{h_p^{2k}}{\delta t}\sup_{[t^n, t^{n+1}]}\norm{\partial_t w}_{k+1, \Omega_p}^2 \right).
    \end{aligned}
\end{equation}
where $t^{n}\leq \theta_u^{n+1}, \theta_w^{n+1}, \theta_z^{n+1} \leq t^{n+1}$ and $C>0$ absorbs all approximation error constants.

We drop the positive terms involving $\norm{\bm{\chi}_u^{n+1} - \bm{\chi}_u^{n}}_{\Omega_f}^2$, $\norm{\dot{\chi}_w^{n+1} - \dot{\chi}_w^{n}}_{\Omega_p}^2$, and $\norm{\nabla\dot{\chi}_w^{n+1} - \nabla\dot{\chi}_w^{n}}_{\Omega_p}^2$. Further, we also take $(\bm{u}_0)_h = \Pi_u \bm{u}_0$, $(w_0)_h = \Pi_w w_0$, and $(z_0)_h = \Pi_z z_0$ so that the terms for the initial conditions are all zero. Summing from $n=0$ to $N-1$, we obtain
\begin{equation}\label{eq:disc_total_err_4}
    \begin{aligned} 
        & C_4\left(\norm{\dot{\chi}_w^{N}}_{\Omega_p}^2 + \norm{\nabla\dot{\chi}_w^{N}}_{\Omega_p}^2 + \norm{\chi_z^{N}}_{\Omega_p}^2\right) + \delta t\sum_{n=0}^{N-1}\left(\frac{\rho_f}{2N\delta t}\norm{\bm{\chi}_u^{N}}_{\Omega_f}^2 + C_2\norm{\nabla \bm{\chi}_u^{n+1}}_{\Omega_f}^2 \right. \\
        & \left. + \frac{\alpha\beta_{pf}^2}{\delta t} \norm{\chi_p^{n+1}}_{\Omega_f}^2 + \frac{\overline{\alpha}}{\delta t} \norm{\chi_g^{n+1}}_{-1/2, \Omega_p}^2  \right) \leq \delta t\sum_{n=0}^{N-2}C_5\left(\norm{\dot{\chi}_w^{n+1}}_{\Omega_p}^2 + \norm{\nabla\dot{\chi}_w^{n+1}}_{\Omega_p}^2 + \norm{\chi_z^{n+1}}_{\Omega_p}^2 \right) \\
        &\qquad + \delta t\sum_{n=0}^{N-1}C\left((\delta t)^2\norm{\partial_{tt}\bm{u}(\theta_u^{n+1})}_{\Omega_f}^2 + \delta t\norm{\partial_{ttt}w(\theta_w^{n+1})}_{1,\Omega_p}^2\right. \\
        & \qquad + (\delta t)^2\norm{\partial_{ttt}z(\theta_z^{n+1})}_{\Omega_p}^2 + h_f^{2k}\sup_{[t^n, t^{n+1}]}\norm{\partial_t\bm{u}}_{k+1, \Omega_f}^2 + \frac{h_f^{2k}}{\delta t}\norm{\bm{u}(t^{n+1})}_{k+1, \Omega_f}^2\\
        &\qquad + h_f^{2k}\norm{p(t^{n+1})}_{k, \Omega_f)}^2 + h_p^{2k}\norm{g(t^{n+1})}_{k, \Omega_p}^2 + h_p^{2k}\norm{z(t^{n+1})}_{k+1, \Omega_p}^2 \\
        & \qquad \left. + \frac{h_p^{2k}}{\delta t}\sup_{[t^n, t^{n+1}]}\norm{\partial_t w}_{k+1, \Omega_p}^2 \right).
    \end{aligned}
\end{equation}
where $C_4 = \min\{\frac{1}{2} - \frac{\delta t}{2}, \frac{\rho}{2} - \frac{\rho\delta t}{2}\}$ and $C_5 = \max\{\frac{1}{2}, \frac{\rho}{2}\}$. For the next step, we use the discrete Gronwall's inequality \cite{Quarteroni1994, Ambartsumyan2018} with
\begin{equation}
    \begin{aligned}
        a^{n+1} &= C_4\left(\norm{\dot{\chi}_w^{n+1}}_{\Omega_p}^2 + \norm{\nabla\dot{\chi}_w^{n+1}}_{\Omega_p}^2 + \norm{\chi_z^{n+1}}_{\Omega_p}^2\right) ,\\
        b^{n+1} &= \frac{\rho_f}{2N\delta t}\norm{\bm{\chi}_u^{N}}_{\Omega_f}^2 + C_2\norm{\nabla \bm{\chi}_u^{n+1}}_{\Omega_f}^2 + \frac{\alpha\beta_{pf}^2}{\delta t} \norm{\chi_p^{n+1}}_{\Omega_f}^2 + \frac{\overline{\alpha}}{\delta t} \norm{\chi_g^{n+1}}_{-1/2, \Omega_p}^2, \\
        c^{n+1} &= C\left((\delta t)^2\norm{\partial_{tt}\bm{u}(\theta_u^{n+1})}_{\Omega_f}^2 + \delta t\norm{\partial_{ttt}w(\theta_w^{n+1})}_{1,\Omega_p}^2 +  (\delta t)^2\norm{\partial_{ttt}z(\theta_z^{n+1})}_{\Omega_p}^2 \right. \\
        & \qquad + h_f^{2k}\sup_{[t^n, t^{n+1}]}\norm{\partial_t\bm{u}}_{k+1, \Omega_f}^2 + \frac{h_f^{2k}}{\delta t}\norm{\bm{u}(t^{n+1})}_{k+1, \Omega_f}^2 + h_f^{2k}\norm{p(t^{n+1})}_{k, \Omega_f)}^2 \\
        &\qquad \left. + h_p^{2k}\norm{g(t^{n+1})}_{k, \Omega_p}^2 + h_p^{2k}\norm{z(t^{n+1})}_{k+1, \Omega_p}^2 + \frac{h_p^{2k}}{\delta t}\sup_{[t^n, t^{n+1}]}\norm{\partial_t w}_{k+1, \Omega_p}^2 \right), \\
        d^{n+1} &= \frac{C_5}{C_4}, \text{ and} \\
        B &= 0.
    \end{aligned}
\end{equation}
Then, applying the discrete Gronwall lemma, we obtain 
\begin{equation}\label{eq:disc_total_err_gronwall}
    \begin{aligned} 
        &C_4\left(\norm{\dot{\chi}_w^{N}}_{\Omega_p}^2 + \norm{\nabla\dot{\chi}_w^{N}}_{\Omega_p}^2 + \norm{\chi_z^{N}}_{\Omega_p}^2 \right) + \frac{\rho_f}{2}\norm{\bm{\chi}_u^{N}}_{\Omega_f}^2 \\
        & + \sum_{n=0}^{N-1}\left(C_2\delta t\norm{\nabla \bm{\chi}_u^{n+1}}_{\Omega_f}^2 +  \alpha\beta_{pf}^2\norm{\chi_p^{n+1}}_{\Omega_f}^2 + \overline{\alpha} \norm{\chi_g^{n+1}}_{-1/2, \Omega_p}^2 \right) \\
        &\quad \leq \exp\left(\delta t (N-1)\frac{C_5}{C_4}\right)\sum_{n=0}^{N-1}C\left((\delta t)^3\norm{\partial_{tt}\bm{u}(\theta_u^{n+1})}_{\Omega_f}^2 + (\delta t)^2\norm{\partial_{ttt}w(\theta_w^{n+1})}_{1,\Omega_p}^2\right. \\
        & \qquad + (\delta t)^3\norm{\partial_{ttt}z(\theta_z^{n+1})}_{\Omega_p}^2 + h_f^{2k}~\delta t\sup_{[t^n, t^{n+1}]}\norm{\partial_t\bm{u}}_{k+1, \Omega_f}^2 + h_f^{2k}\norm{\bm{u}(t^{n+1})}_{k+1, \Omega_f}^2\\
        &\qquad + h_f^{2k}~\delta t\norm{p(t^{n+1})}_{k, \Omega_f}^2 + h_p^{2k}~\delta t\norm{g(t^{n+1})}_{k, \Omega_p}^2 + h_p^{2k}~\delta t\norm{z(t^{n+1})}_{k+1, \Omega_p}^2 \\
        & \qquad \left. + h_p^{2k}\sup_{[t^n, t^{n+1}]}\norm{\partial_t w}_{k+1, \Omega_p}^2 \right).
    \end{aligned}
\end{equation}
Note that from \eqref{eq:back_euler} and \eqref{eq:bochner_w}, we have
\begin{align*}
    \norm{\dot{\xi}_w^{N}}_{1, \Omega_p} = \frac{1}{\delta t} \norm{\xi_w^{n+1} - \xi_w^{n+1}}_{1, \Omega_p} \leq Ch_p^k \sup_{[t^{N-1}, t^N]} \norm{\partial_t w}_{k+1, \Omega_p}.
\end{align*}
We also note that if $a_1^2+a_2^2+\cdots+a_n^2 \leq b_1^2+b_2^2+\cdots+b_m^2$. Then
\begin{equation*}
    \abs{a_1} + \abs{a_2} + \cdots + \abs{a_n} \leq \sqrt{n}\sqrt{b_1^2+b_2^2+\cdots+b_m^2} \leq \sqrt{n}\left(\abs{b_1}+\abs{b_2}+\cdots+\abs{b_m}\right).
\end{equation*}
Hence, using the triangle inequality, \eqref{eq:trunc-approx}, \eqref{eq:ddt_error} and  \eqref{eq:disc_total_err_gronwall}, together with 
$\alpha \sim \overline{\alpha}\sim (\delta t)^2$, we can compute the final error bound
\begin{equation}\label{eq:total_error}
    \begin{aligned}
        &\norm{\partial_t w(t^{N}) - \dot{w}_h^N}_{1, \Omega_p} + \norm{z(t^{N}) - z_h^N}_{\Omega_p} + \norm{\bm{u}(t^N) - \bm{u}_h^N}_{\Omega_f} + \sum_{n=0}^{N-1}\left(\sqrt{\delta t}\norm{\bm{u}(t^{n+1}) - \bm{u}_h^{n+1}}_{1,\Omega_f} \right. \\
        &\left. +~ \delta t \norm{p(t^{n+1}) - p_h^{n+1}}_{\Omega_f} + \delta t \norm{g(t^{n+1}) - g^{n+1}}_{-1/2, \Omega_p}\right) \leq \norm{\partial_t w(t^N) - \dot{w}^N}_{1,\Omega_p} + \norm{\dot{\chi}_w^{N}}_{1,\Omega_p} \\
        &\quad + \norm{\chi_z^{N}}_{\Omega_p} + \norm{\bm{\chi}_u^{N}}_{\Omega_f} + \sum_{n=0}^{N-1}\left(\sqrt{\delta t}\norm{\bm{\chi}_u^{n+1}}_{1,\Omega_f} + \delta t \norm{\chi_p^{n+1}}_{\Omega_f} + \delta t \norm{\chi_g^{n+1}}_{-1/2, \Omega_p}\right) \\
        &\quad + \norm{\dot{\xi}_w^{N}}_{1,\Omega_p} + \norm{\xi_z^{N}}_{\Omega_p} + \norm{\bm{\xi}_u^{N}}_{\Omega_f} + \sum_{n=0}^{N-1}\left(\sqrt{\delta t}\norm{\bm{\xi}_u^{n+1}}_{H^1(\Omega_f1)} + 
        \delta t \norm{\xi_p^{n+1}}_{\Omega_f} + \delta t\norm{\xi_g^{n+1}}_{\Omega_p}\right) \\
        & \leq C\left(\delta t\norm{\partial_{tt}w(\theta_w^{N})}_{1, \Omega_p} + h_p^k\sup_{[t^{N-1}, t^N]} \norm{\partial_t w}_{k+1, \Omega_p} + h_p^{k+1}\norm{z(t^N)}_{k+1, \Omega_p}\right.\\
        &\qquad \left. +~ h_f^{k}\norm{\bm{u}(t^N)}_{k+1, \Omega_f} \right) + C\sum_{n=0}^{N-1}\left(h_f^k~\sqrt{\delta t}\norm{\bm{u}(t^{n+1})}_{k+1, \Omega_f} + h_f^k~\delta t \norm{p(t^{n+1})}_{k+1, \Omega_f} \right. \\
        &\qquad \left. +~ h_p^k~\delta t\norm{g(t^{n+1})}_{k+1, \Omega_p}\right) + C^*\exp(T)\sum_{n=0}^{N-1}\left((\delta t)^{3/2}\norm{\partial_{tt}\bm{u}(\theta_u^{n+1})}_{\Omega_f} + \delta t\norm{\partial_{ttt}w(\theta_w^{n+1})}_{1,\Omega_p} \right. \\
        & \qquad + (\delta t)^{3/2}\norm{\partial_{ttt}z(\theta_z^{n+1})}_{\Omega_p} + h_f^{k}\sqrt{\delta t}\sup_{[t^n, t^{n+1}]}\norm{\partial_t\bm{u}}_{k+1, \Omega_f} + h_f^{k}\norm{\bm{u}(t^{n+1})}_{k+1, \Omega_f} \\
        &\qquad + h_f^{k}\sqrt{\delta t}\norm{p(t^{n+1})}_{k, \Omega_f} + h_p^{k}\sqrt{\delta t}\norm{g(t^{n+1})}_{k, \Omega_p} + h_p^{k}\sqrt{\delta t}\norm{z(t^{n+1})}_{k+1, \Omega_p} \\
        &\qquad \left. + h_p^{k}\sup_{[t^n, t^{n+1}]}\norm{\partial_t w}_{k+1, \Omega_p} \right).
    \end{aligned}
\end{equation}
where $C$ absorbs all approximation error constants and $C^*$ absorbs the approximation error constants and other constants in \eqref{eq:disc_total_err_gronwall}.

\end{proof}

\section{Numerical Results} \label{sec:num_res}

In this section, we present results for two numerical experiments to illustrate the validity of the partitioned algorithm for the fully discrete problem \eqref{eq:momentum_full_disc_weak}-\eqref{eq:interface_full_disc_weak}. 
For the numerical implementation of the problem \eqref{eq:momentum_full_disc_weak}-\eqref{eq:interface_full_disc_weak}, we adopt a partitioned approach based on a fixed-point iteration similar to the one implemented in \cite{Geredeli_Kunwar_Lee2024}. At each time step $t^{n+1}$, the algorithms begins with solving the fluid subproblem using $\dot{w}_h^{n}$ as an initial guess for $\dot{w}$ on the interface. The plate subproblem is then solved using the pressure trace on $\Omega_p$ computed from the fluid subproblem. Following \cite{Geredeli_Kunwar_Lee2024}, we impose the coupling condition $u_3 = \dot{w}$ on $\Omega_p$ strongly as a Dirichlet boundary condition for the fluid subproblem. All numerical experiments are carried out using FreeFem++ \cite{MR3043640}, which is an open-source software platform for numerically solving PDEs using finite element methods. 

As noted in \cite{Geredeli_Kunwar_Lee2024}, solving the 3D fluid subproblem serially and directly in FreeFem++ is possible only up to a mesh size of $h = 1/13$, which may limit the accuracy of the results we obtain. To overcome this limitation, we employ MUltifrontal Massively Parallel sparse direct Solver (MUMPS) \cite{MUMPS:1, MUMPS:2}, which solves a linear system $Ax = b$ using a parallel sparse LU decomposition, for the fluid subproblem. This allows us to refine the mesh size we use for the fluid domain more efficiently.
 In Sec.~\ref{sec:num_test1}, we present a convergence test in both space and time. In Sec.~\ref{sec:num_test2}, we simulate a free vibrating plate interacting with a viscous fluid beneath it.

\subsection{Convergence test} \label{sec:num_test1}
To validate our solver, we verify that it satisfies the optimal rates of convergence by using manufactured solutions. We consider the computational domains 
\begin{equation*}
    \Omega_f=[0,1]\times[0,1]\times[-1,0] \quad\text{and} \quad \Omega_p=[0,1]\times[0,1]\times\{0\},
\end{equation*}
and the following prescribed fluid velocity $\bm{u}=(u_1, u_2, u_3)^T$, fluid pressure $p$, plate displacement $w$, and auxialiary variable $z$: 
\begin{align*}
    u_1 &= \zeta\left(-\frac{\cos(2\pi x)}{4\pi}+\frac{\cos(4\pi x)}{16\pi}+\frac{3}{16\pi}\right)\sin^2(\pi y)\sin(2\pi y)\left(\frac{\pi}{2}\sin(\pi(z+1))\right)e^{-t}, \\
    u_2 &= 0, \\
    u_3 &= -\zeta\sin^2(\pi x)\sin(2\pi x)\sin^2(\pi y)\sin(2\pi y)\sin^2\left(\frac{\pi}{2}(z+1)\right)e^{-t},\\
    p &= 0, \\
    w &= \zeta\sin^2(\pi x)\sin(2\pi x)\sin^2(\pi y)\sin(2\pi y)e^{-t},\\
    z & = - \zeta\left(3\pi^2\sin(4\pi x) - 4\pi^2\sin^2(\pi x)\sin(2\pi x)\right)\sin^2(\pi y)\sin(2\pi y)e^{-t} \\
    &\quad -\zeta\sin^2(\pi x)\sin(2\pi x)\left(3\pi^2\sin(4\pi y) - 4\pi^2\sin^2(\pi y)\sin(2\pi y)\right)e^{-t}.
\end{align*}
For the discrete spaces, we use inf-sup stable $\mathbb{P}_2-\mathbb{P}_1$ pair for the velocity and the pressure, $\mathbb{P}_2$ for both $w$ and $z$, and $\mathbb{P}_1$ for $g$. To study spatial convergence, we fix the time step $\delta t = 1e-04$ and final time $T = 1e-03$, corresponding to $N = 10$ time steps, and refine $h_f$ and $h_p$. For simplicity, the mesh sizes for the fluid and plate domains are taken to be equal, i.e, $h_f = h_p = h$. To control the amplitude of the manufactured solutions, particularly the term involving $\Delta z_h$, we set $\zeta = (60\pi^4)^{-1}$. %{\color{red} Is this $\rho$?} 
All other parameters are set to 1.

Tabs.~\ref{tab:err_rates_fluid}-\ref{tab:err_rates_plate} report the computed errors and corresponding convergence rates for all the variables of interest $(\bm{u}, p, w, z)$ which match their theoretical convergence rates. Fig.~\ref{fig:wdot_u3_comp} presents the snapshots of the vertical component $u_{3,h}^N$ of the fluid velocity restricted on the interface/plate, i.e., at $z=0$, the plate velocity $\dot{w}_h^N$, and their absolute difference at $t = T = 1e-03$. This shows the accuracy of the enforcement of the interface coupling condition on the system.

In addition, Fig.~\ref{fig:int_wdot} shows the numerical values of $\int_{\Omega_p}\dot{w}\,d\Omega_p$ to show that this quantity remains close to machine precision for all mesh sizes.

\begin{table}[htb!]
    \centering
    \begin{tabular}{|c|c|c|c|c|c|c|}
       \hline
       \multirow{2}*{$h$}  & \multicolumn{2}{c|}{$\norm{\bm{u}(t^N)-\bm{u}_h^{N}}_{\Omega_f}$} & \multicolumn{2}{c|}{$\norm{\bm{u}(t^N)-\bm{u}_h^{N}}_{1,\Omega_f}$} & \multicolumn{2}{c|}{$\norm{p(t^N) - p_h^{N}}_{\Omega_f}$} \\ \cline{2-7} 
         & error & rate & error & rate & error & rate \\ \hline
        $1/2$ & $1.07e-05$ & & $1.70e-04$ & & $8.90e-05$ & \\  \hline
        $1/4$ & $2.22e-06$ & $2.27$ & $7.39e-05$ & $1.20$ & $1.39e-05$ & $2.67$ \\ \hline
        $1/8$ & $3.41e-07$ & $2.70$ & $2.28e-05$ & $1.69$ & $1.35e-06$ & $3.37$ \\ \hline
        $1/16$ & $4.68e-08$ & $2.87$ & $6.20e-06$ & $1.88$ & $1.27e-07$ & $3.41$ \\ \hline
        $1/32$ & $6.15e-09$ & $2.93$ & $1.59e-06$ & $1.96$ & $1.03e-08$ & $3.63$ \\ \hline
    \end{tabular}
    \caption{Errors for velocity $\bm{u}$ and pressure $p$, and corresponding rates of convergence for different spatial mesh resolution with $\delta t = 1e-04$.}
    \label{tab:err_rates_fluid}
\end{table}

\begin{table}[htb!]
    \centering
    \begin{tabular}{|c|c|c|c|c|c|c|c|c|}
       \hline
       \multirow{2}*{$h$}  & \multicolumn{2}{c|}{$\norm{w(t^N)-w_h^{N}}_{\Omega_p}$} & \multicolumn{2}{c|}{$\norm{w(t^N)-w_h^{N}}_{1,\Omega_p}$} & \multicolumn{2}{c|}{$\norm{z(t^N)-z_h^{N}}_{\Omega_p}$} & \multicolumn{2}{c|}{$\norm{z(t^N)-z_h^{N}}_{1,\Omega_p}$} \\ \cline{2-9} 
        & error & rate & error & rate & error & rate & error & rate \\ \hline
        $1/2$ & $2.25e-05$ & & $2.60e-04$ & & $3.87e-03$ & & $5.24e-02$ & \\  \hline
        $1/4$ & $5.01e-06$ & $2.17$ & $1.15e-04$ & $1.18$ & $6.56e-04$ & $2.56$ & $2.41e-02$  & $1.12$ \\ \hline
        $1/8$ & $5.58e-07$ & $3.17$ & $2.80e-05$ & $2.04$ & $1.07e-04$ & $2.62$ & $6.88e-03$ & $1.81$ \\ \hline
        $1/16$ & $6.38e-08$ & $3.13$ & $7.19e-06$ & $1.96$ & $1.40e-05$ & $2.93$ & $1.72e-03$ & $2.00$ \\ \hline
        $1/32$ & $8.07e-09$ & $2.98$ & $1.75E-06$ & $2.04$ & $2.00E-06$ & $2.81$ & $4.17e-04$ & $2.05$ \\ \hline
    \end{tabular}
    \caption{Error for $w$ and $z$, and corresponding rates of convergence for different spatial mesh resolution with $\delta t = 1e-04$.}
    \label{tab:err_rates_plate}
\end{table}

\begin{comment}
\begin{figure}[h!]
    \centering
    \begin{subfigure}{0.45\textwidth}
        \includegraphics[width = \textwidth]{figs/err_space_L2_plot.pdf}
    \end{subfigure}
    \begin{subfigure}{0.45\textwidth}
        \includegraphics[width = \textwidth]{figs/err_space_H1_plot.pdf}
    \end{subfigure}
    \begin{subfigure}{\textwidth}
        \centering
        \includegraphics[width = 0.8\textwidth]{figs/err_space_legend.pdf}
    \end{subfigure}
    \caption{Errors for all the variables of interest and their corresponding rates of convergence, with $\delta t = 1e-04$, $T=1e-03$. \lander{should I remove this figure?}}
    \label{fig:placeholder}
\end{figure}
% paraview settings: 0.34, 0.04, 32 font
\end{comment}

\begin{figure}
    \centering
    \begin{subfigure}{0.28\textwidth}
        \includegraphics[width = \textwidth]{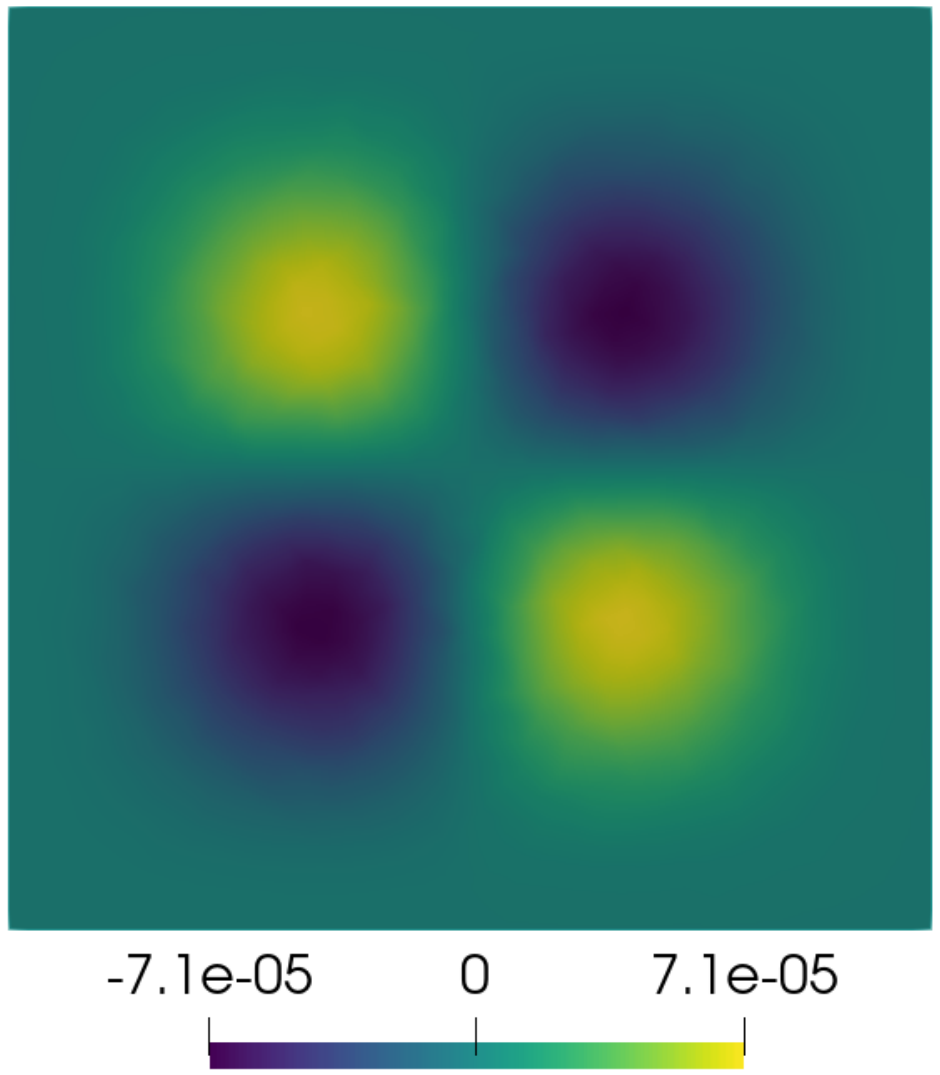}
        %\caption{$u_3(x, y, 0, T)$}
    \end{subfigure}
    \begin{subfigure}{0.28\textwidth}
        \includegraphics[width = \textwidth]{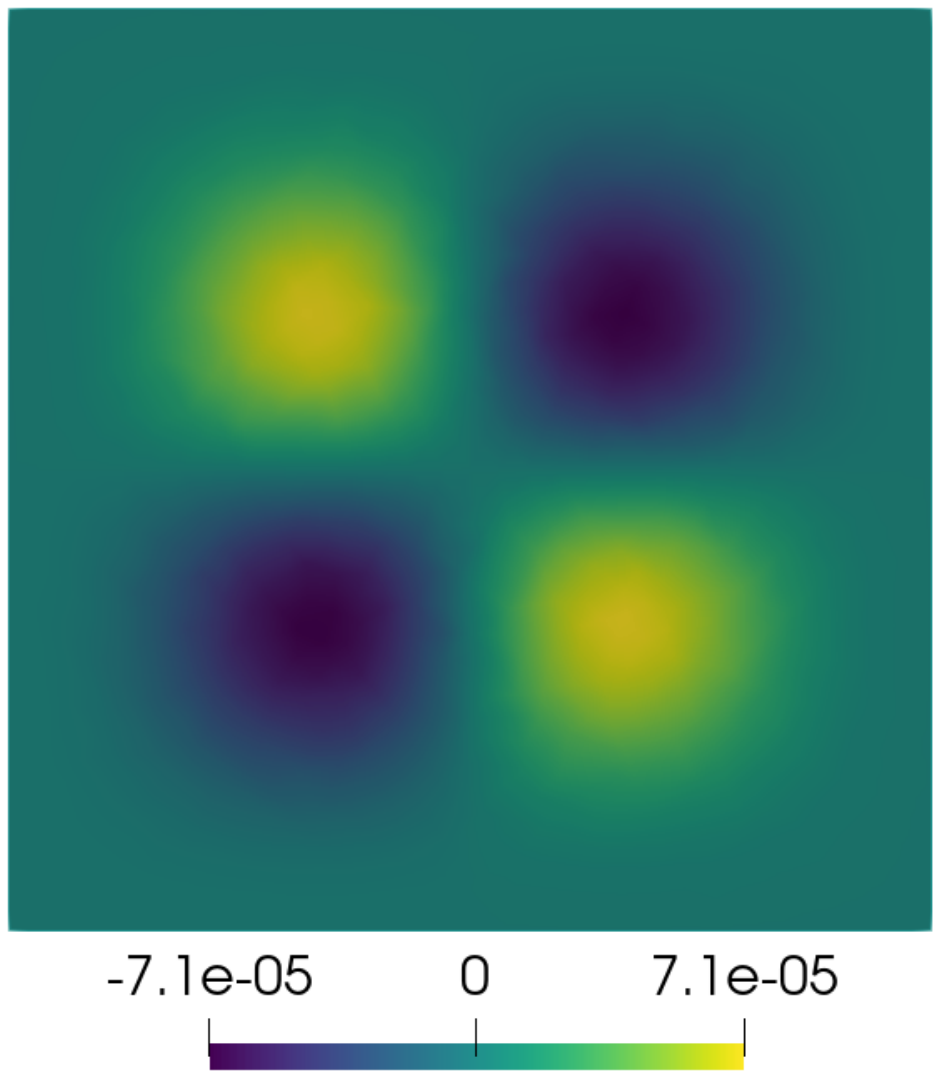}
        %\caption{$\dot{w}(x, y, T)$}
    \end{subfigure}
    \begin{subfigure}{0.28\textwidth}
        \includegraphics[width = \textwidth]{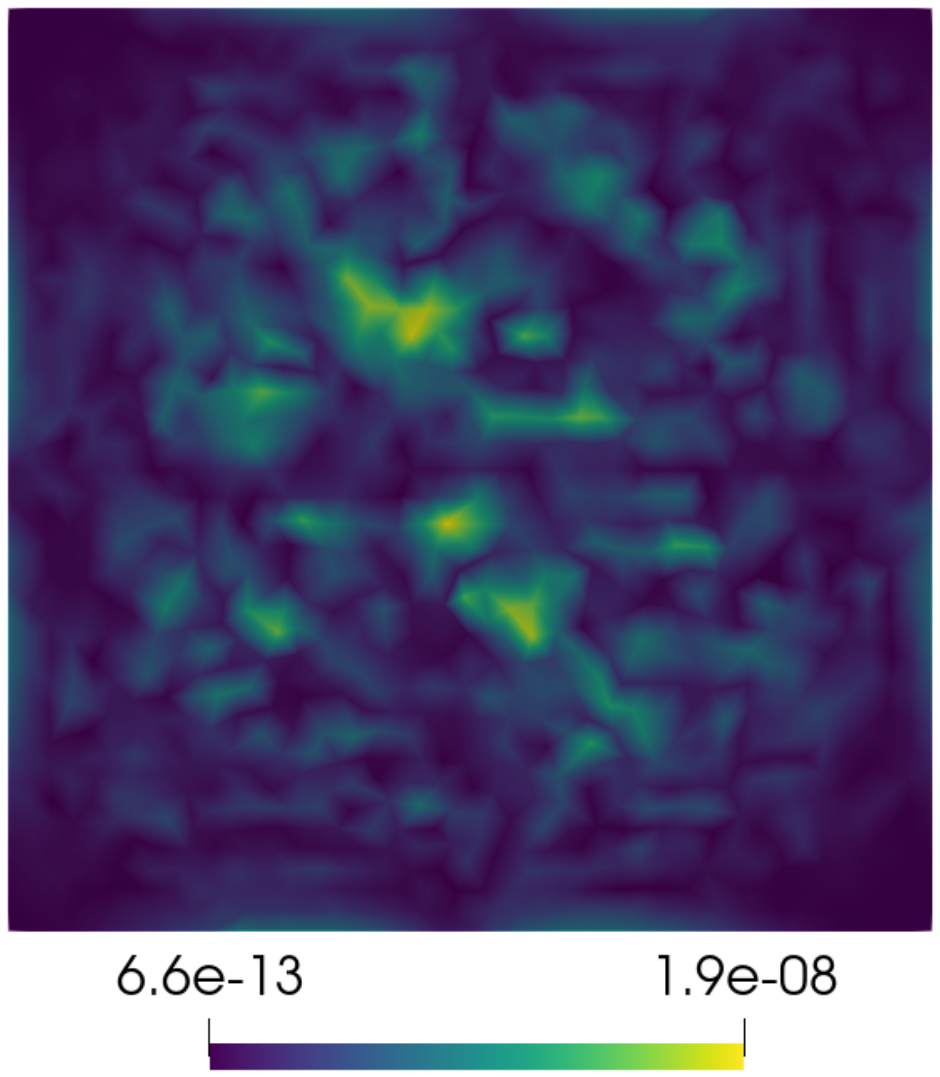}
        %\caption{$\abs{u_3(x, y, 0, T) - w(x, y, T)}$}
    \end{subfigure}
    \caption{Convergence test: vertical component $u_3$ of the velocity at $z = 0$ (left), velocity of the plate $\dot{w}$ (center), and their absolute difference (right) at $t = 1e-03$.}
    \label{fig:wdot_u3_comp}
\end{figure}

To study the convergence in time, we fix the spatial mesh to $h_f = h_p = 1/32$, the final time to $T = 1$, and vary the time step $\delta t$. As also reported in \cite{Geredeli_Kunwar_Lee2024}, the temporal errors remain essential constant as $\delta t$ decreases. Following the strategy in \cite{Geredeli_Kunwar_Lee2024}, we introduce a scaling factor for the terms with time derivatives in the plate equation:
\begin{equation*}
    \omega\partial_{tt}w + \omega\rho \partial_{tt}z - \Delta z = f_p,
\end{equation*}
in order to reduce the influence of spatial errors and isolate the temporal discretization error. With this modification, the expected first-order convergence in time becomes apparent. We set the scaling parameter to $1e+05$ and all other parameters to 1.

Tabs.~\ref{tab:err_rates_fluid_time}–\ref{tab:err_rates_plate_time} report the corresponding errors and convergence rates, which are consistent with the theoretical accuracy of the first-order time-stepping scheme for the velocity and pressure variables. We note that we observe superconvergence for the plate displacement $w$ and the auxiliary variable $z$, which may be due to the particular choice of manufactured solutions.

\begin{table}[htb!]
    \centering
    \begin{tabular}{|c|c|c|c|c|c|c|}
       \hline
       \multirow{2}*{$\delta t$}  & \multicolumn{2}{c|}{$\norm{\bm{u}(t^N)-\bm{u}_h^{N}}_{\Omega_f}$} & \multicolumn{2}{c|}{$\norm{\bm{u}(t^N)-\bm{u}_h^{N}}_{1,\Omega_f}$} & \multicolumn{2}{c|}{$\norm{p(t^N) - p_h^{N}}_{\Omega_f}$} \\ \cline{2-7} 
         & error & rate & error & rate & error & rate \\ \hline
        $1/2$ & $2.18E-06$ & & $4.70E-05$ & & $1.01E-02$ & \\  \hline
        $1/4$ & $1.14E-06$ & $0.94$ & $2.48E-05$ & $0.92$ & $4.62E-03$ & $1.13$ \\ \hline
        $1/8$ & $5.82E-07$ & $0.97$ & $1.27E-05$ & $0.96$ & $2.22E-03$ & $1.06$ \\ \hline
        $1/16$ & $2.94E-07$ & $0.99$ & $6.46E-06$ & $0.98$ & $1.09E-03$ & $1.03$ \\ \hline
    \end{tabular}
    \caption{Errors for velocity $\bm{u}$ and pressure $p$, and corresponding rates of convergence for different time steps $\delta t$ with $h = 1/32$ and $\omega=1e+05$.}
    \label{tab:err_rates_fluid_time}
\end{table}

\begin{table}[htb!]
    \centering
    \begin{tabular}{|c|c|c|c|c|c|c|c|c|}
       \hline
       \multirow{2}*{$\delta t$}  & \multicolumn{2}{c|}{$\norm{w(t^N)-w_h^{N}}_{\Omega_p}$} & \multicolumn{2}{c|}{$\norm{w(t^N)-w_h^{N}}_{1,\Omega_p}$} & \multicolumn{2}{c|}{$\norm{z(t^N)-z_h^{N}}_{\Omega_p}$} & \multicolumn{2}{c|}{$\norm{z(t^N)-z_h^{N}}_{1,\Omega_p}$} \\ \cline{2-9} 
        & error & rate & error & rate & error & rate & error & rate \\ \hline
        $1/2$ & $3.13E-04$ & & $4.32E-03$ & & $6.34E-02$ & & $9.81E-01$ & \\  \hline
        $1/4$ & $7.77E-05$ & $2.01$ & $1.11e-03$ & $1.96$ & $1.66e-02$ & $1.93$ & $2.62e-01$  & $1.91$ \\ \hline
        $1/8$ & $1.40e-05$ & $2.47$ & $2.14e-04$ & $2.37$ & $3.34e-03$ & $2.32$ & $5.38e-02$ & $2.28$ \\ \hline
        $1/16$ & $5.57e-06$ & $1.33$ & $5.21e-05$ & $2.03$ & $5.15e-04$ & $2.70$ & $5.79e-03$ & $3.22$ \\ \hline
    \end{tabular}
    \caption{Error for $w$ and $z$, and corresponding rates of convergence for different time steps $\delta t$ with $h = 1/32$ and $\omega=1e+05$.}
    \label{tab:err_rates_plate_time}
\end{table}

\begin{figure}[htb!]
    \centering
    \includegraphics[width=0.48\linewidth]{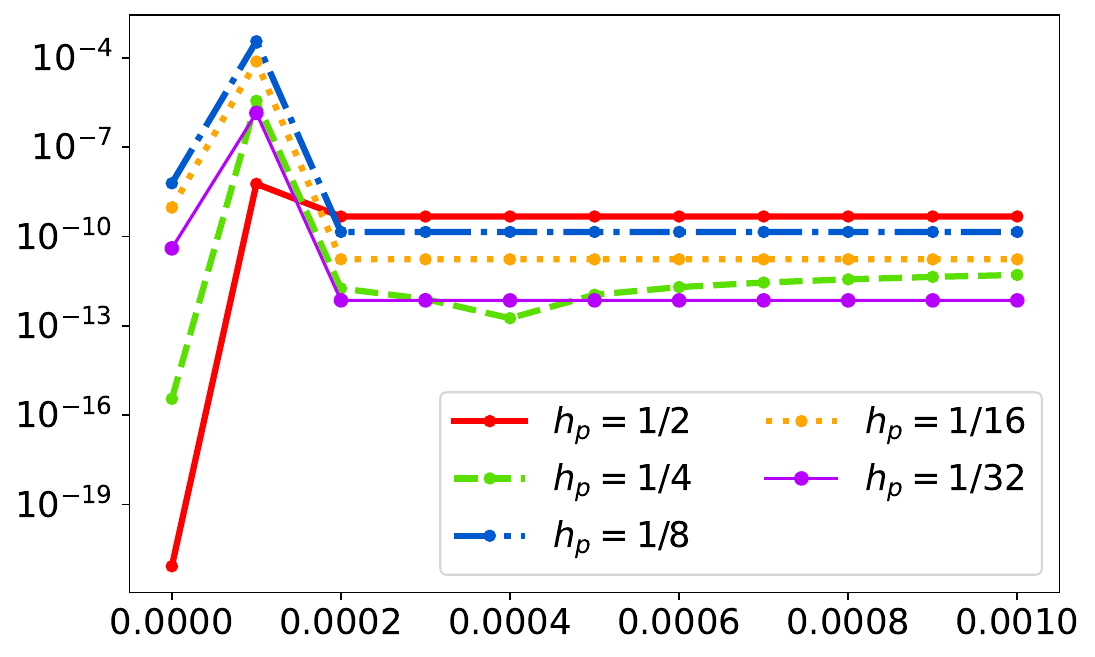}
    \caption{The absolute value of the integral $\int_{\Omega_p}\dot{w}_h^{n}\,d\Omega_p$ for different spatial mesh sizes $h_p$.}
    \label{fig:int_wdot}
\end{figure}

\subsection{Free vibrating plate test} \label{sec:num_test2}

To further assess the performance of our algorithm, we consider a physical experiment inspired by \cite{Nguyen2021, Geredeli_Kunwar_Lee2024}, involving the free vibration of a simply supported plate with a viscous incompressible fluid with computational domains
\begin{equation*}
    \Omega_f = [0,1]\times [0,1] \times [-1/2, 0] \quad \text{and} \quad \Omega_p = [0,1]\times [0,1]\times \{0\}.
\end{equation*} 
In this setting, the body force is set to $\bm{f} = \bm{0}$, and the fluid is initially at rest, i.e., $\bm{u}_0 = \bm{0}$. We prescribe a nonzero initial plate displacement but with zero velocity, i.e.,
\begin{equation}
    w(x, y, 0) = w_0 = A\sin(2\pi x)\sin(2\pi y), \quad \partial_{t}w(x, y, 0) = w_{t0} = 0
\end{equation}
which ensures that the interface condition is satisfied. We choose the amplitude to be $A = 1e-2$.

We also rewrite the plate equation to
\begin{align*}
    \rho_p \partial_{tt}w - \rho \partial_{tt}z + D \Delta z  = 0,
\end{align*}
where $\rho_p$ denotes the plate density and $D$ its flexural rigidity of the plate. Particularly, we consider the case where $\rho_p = 2.7$, $D = 6.4527$, and $\rho = 0$, corresponding to a classical Kirchhoff-Love plate model where only the bending dynamics are retained. 

For the numerical discretization, we set the mesh size to $h_f = h_p = 1/32$, a time step of $\delta t = 0.001$ s, and a final time of $T = 0.1$ s. Fig.~\ref{fig:max_disp_energy} shows the time evolution of the plate displacement and the kinetic energies of the system. The left panel, which presents the maximum displacement of the plate, demonstrates a clear decay in the amplitude as the vibration progresses in time. Meanwhile, the right panel displays the kinetic energy of the fluid, the plate, and the total kinetic energy of the system. Although the energy is initially concentrated in the plate, it is transferred to the fluid through the coupling and is subsequently dissipated. This behavior is expected since the Stokes equations governing the fluid dynamics are inherently dissipative.

Lastly, we report the snapshots of the plate velocity, the vertical component of the fluid velocity restricted on $\Omega_p$, and their absolute difference in Fig.~\ref{fig:exp2_wdot_u3}, at time instants $t = 0.01, 0.05, 0.1$. The plate velocity and the fluid velocity on $\Omega_p$ exhibit nearly identical behavior at all times, while their pointwise difference remains at least three orders of magnitude smaller than the solution amplitude. This confirms the accurate enforcement of the coupling condition throughout the simulation.

\begin{figure}
    \centering
    \begin{subfigure}{0.49\textwidth}
        \includegraphics[width=\linewidth]{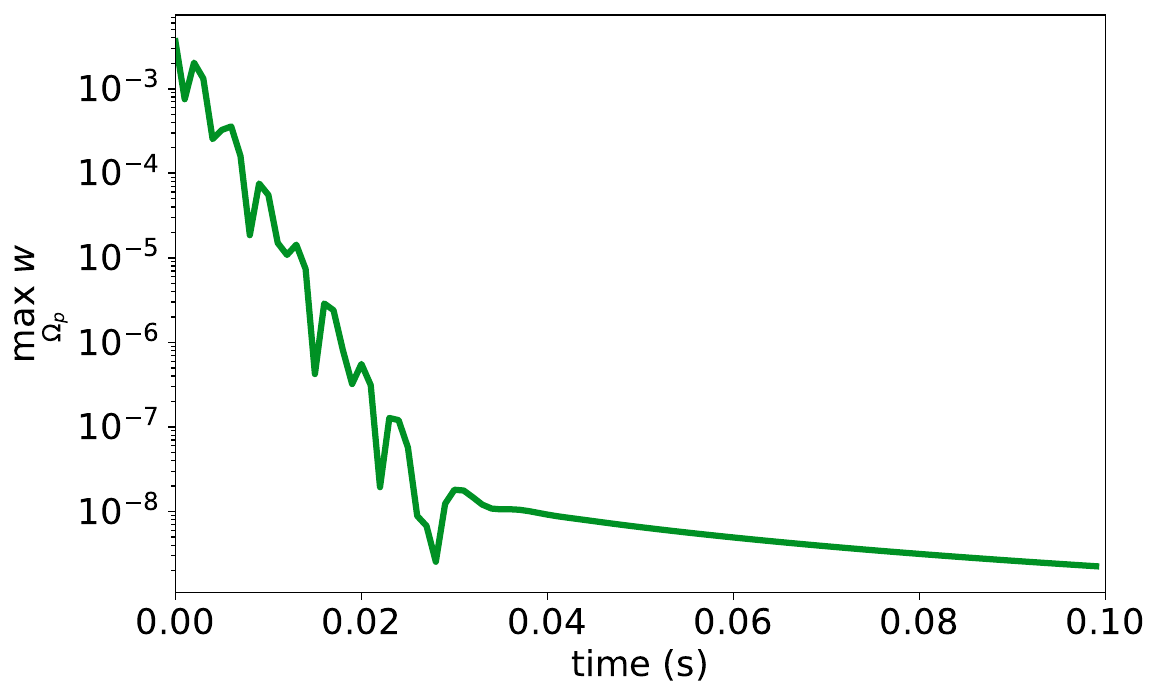}
    \end{subfigure}
    \begin{subfigure}{0.48\textwidth}
        \includegraphics[width=\linewidth]{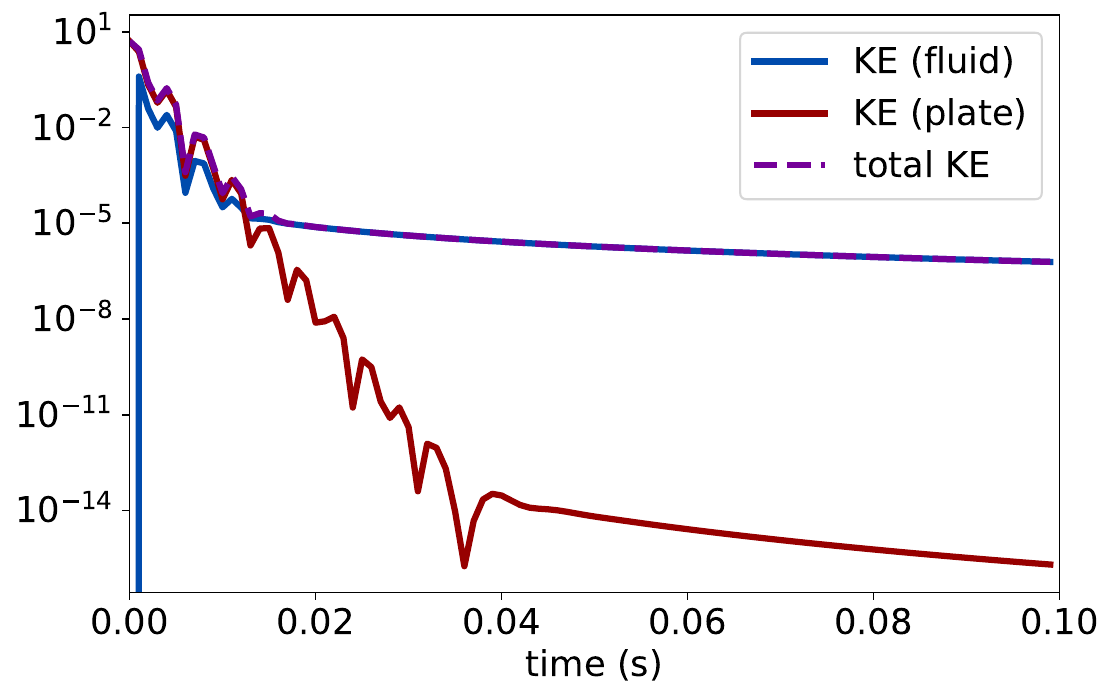}
    \end{subfigure}
    \caption{Free vibrating plate: maximum displacement of the plate (left), and kinetic energies associated to the fluid and the plate, and the corresponding total kinetic energy of the system (right) over time.}
    \label{fig:max_disp_energy}
\end{figure}

\begin{figure}
    \centering
    \begin{tabular}{cccc}
         & $\dot{w}(t)$ & $u_3(t)|_{\Omega_p}$ & $\abs{\dot{w}( t) - u_3(t)|_{\Omega_p}}$ \\
        $t = 0.01$ & \includegraphics[align=c, width = 0.25\textwidth]{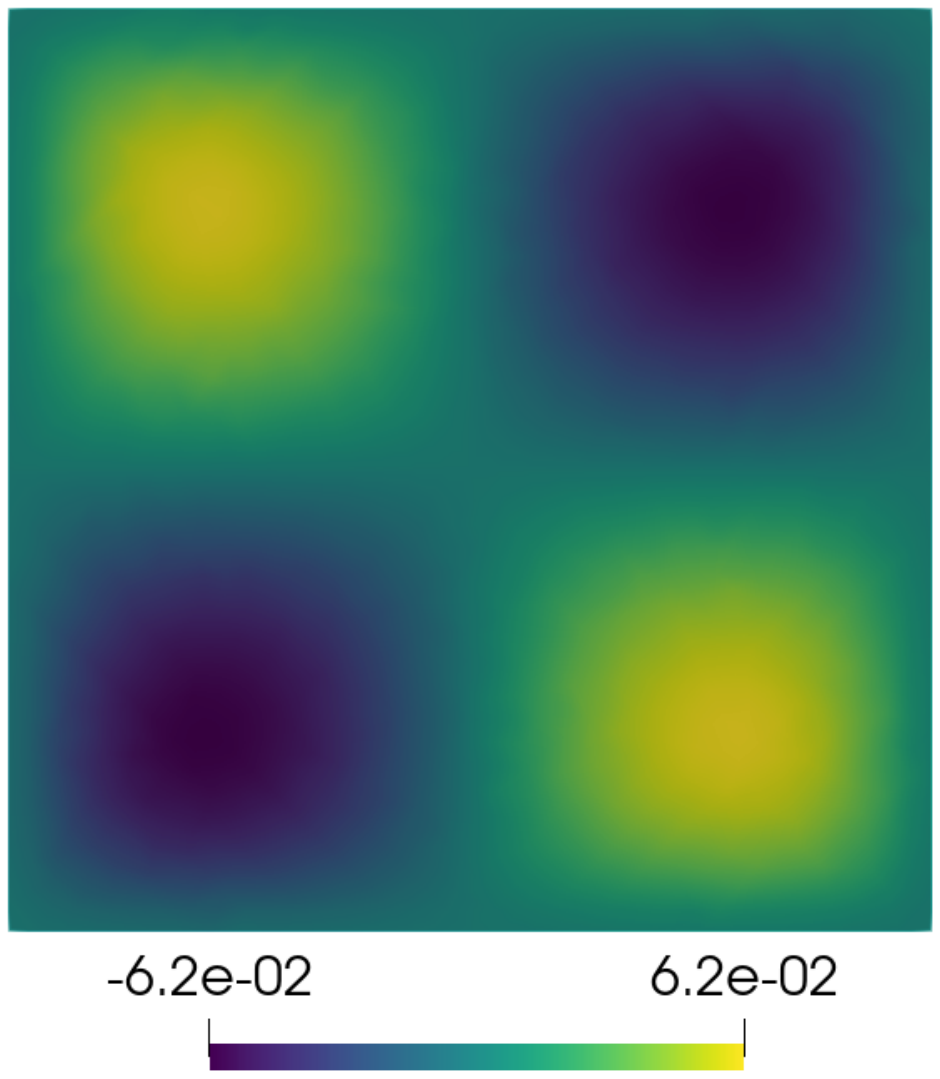} & \includegraphics[align=c, width = 0.25\textwidth]{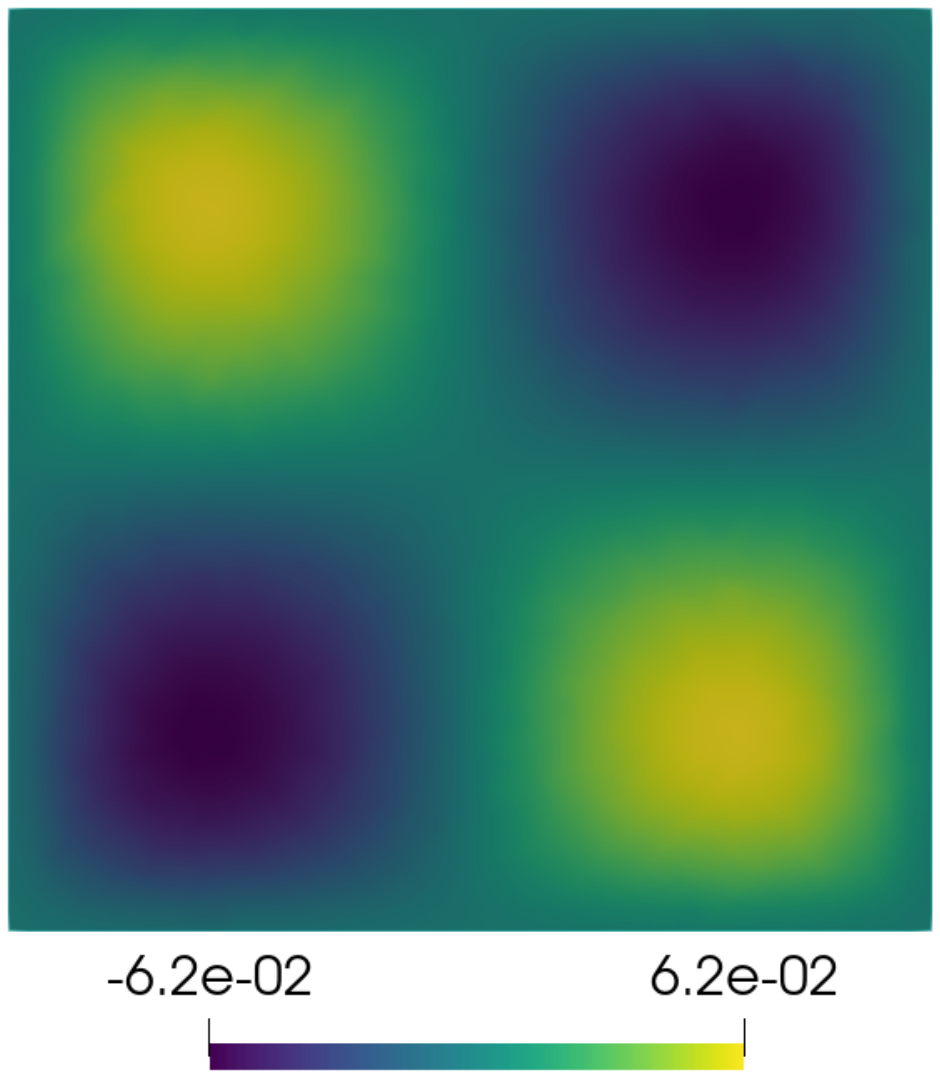} & \includegraphics[align=c, width = 0.25\textwidth]{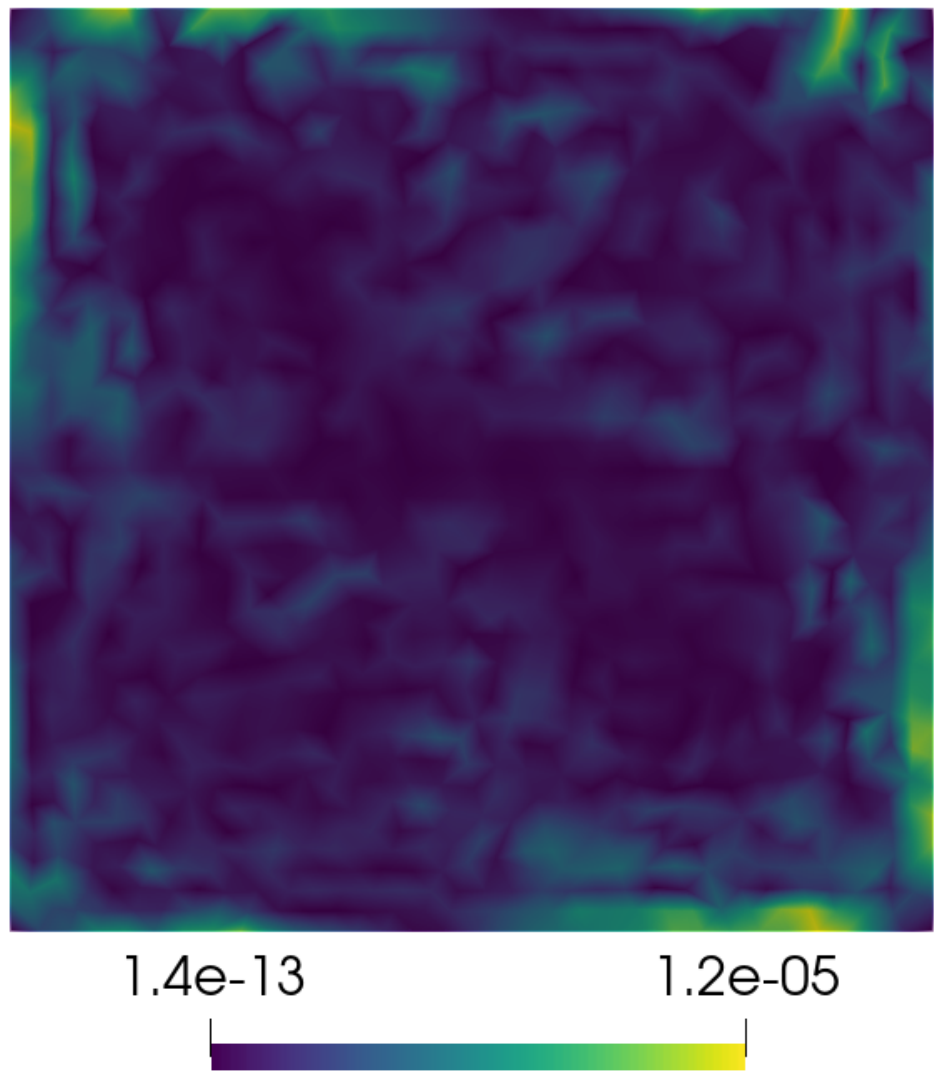} \\
        $t = 0.05$ & \includegraphics[align=c, width = 0.25\textwidth]{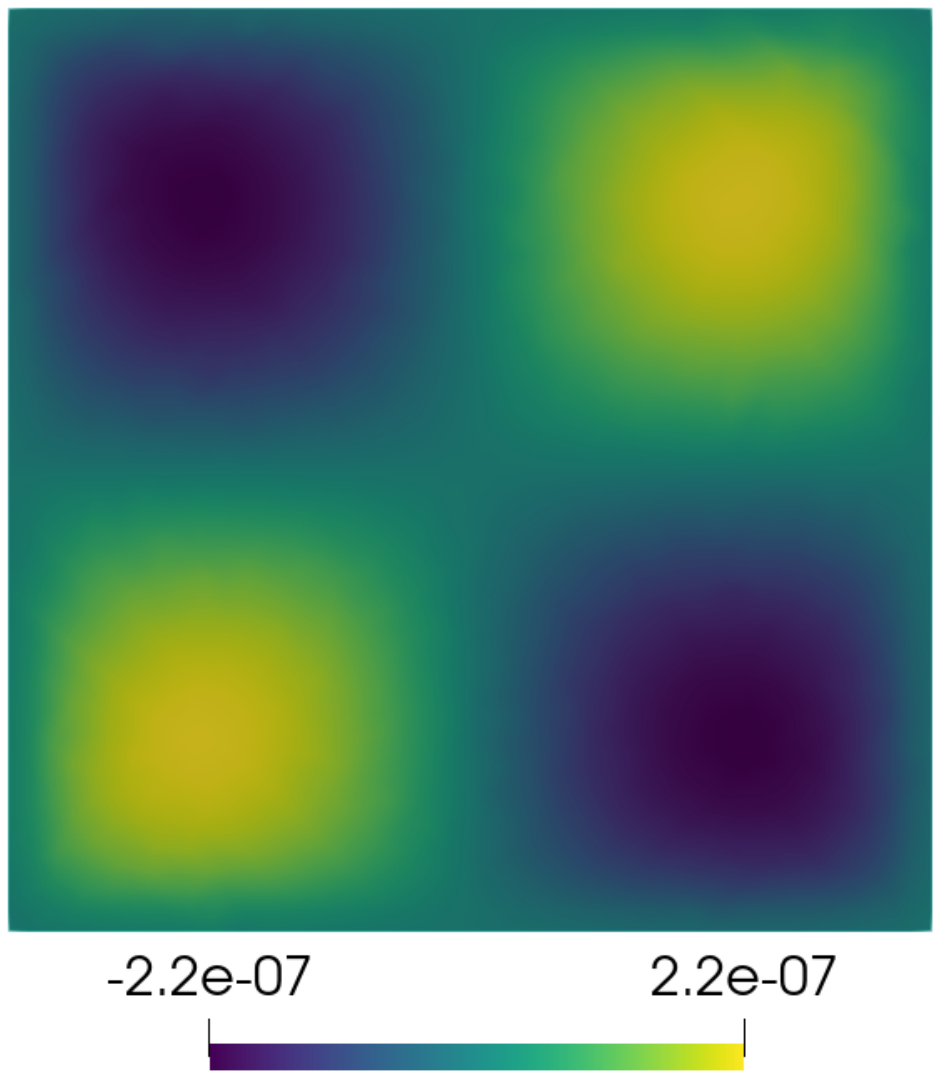} & \includegraphics[align=c, width = 0.25\textwidth]{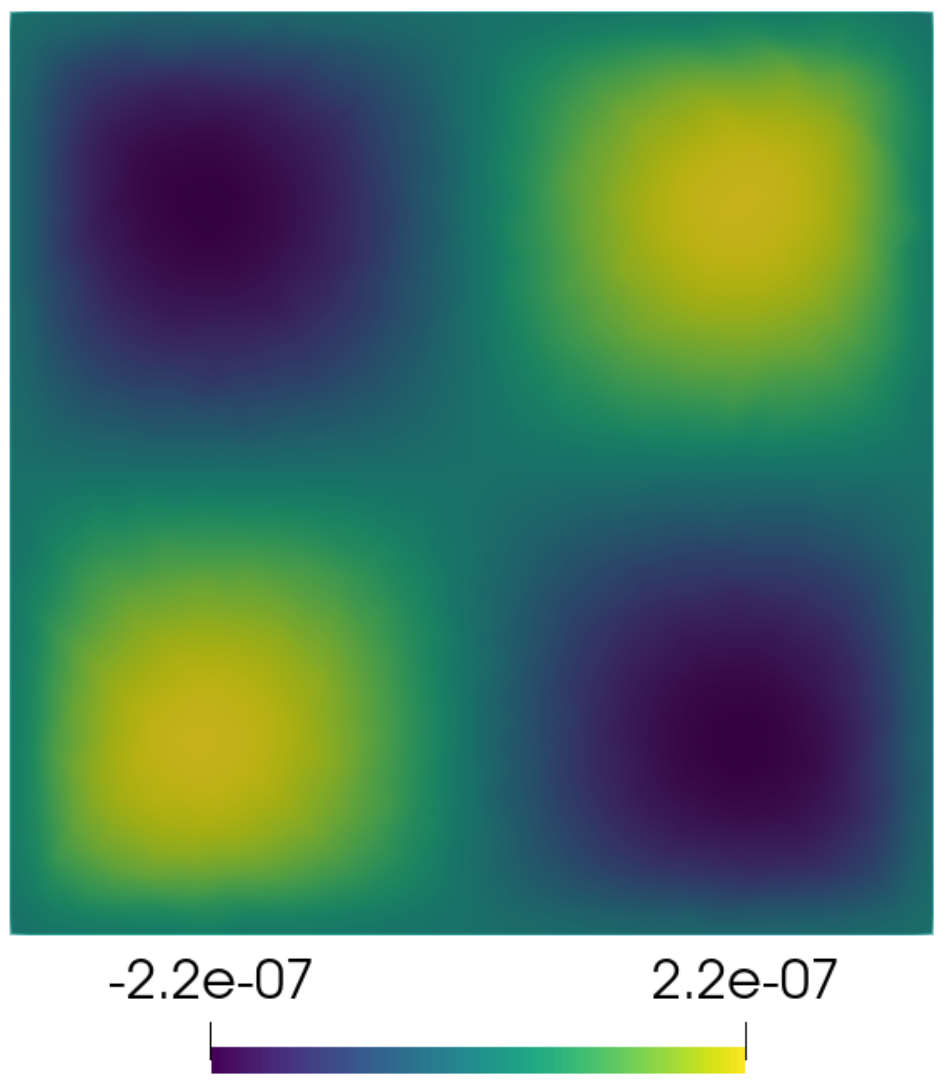} & \includegraphics[align=c, width = 0.25\textwidth]{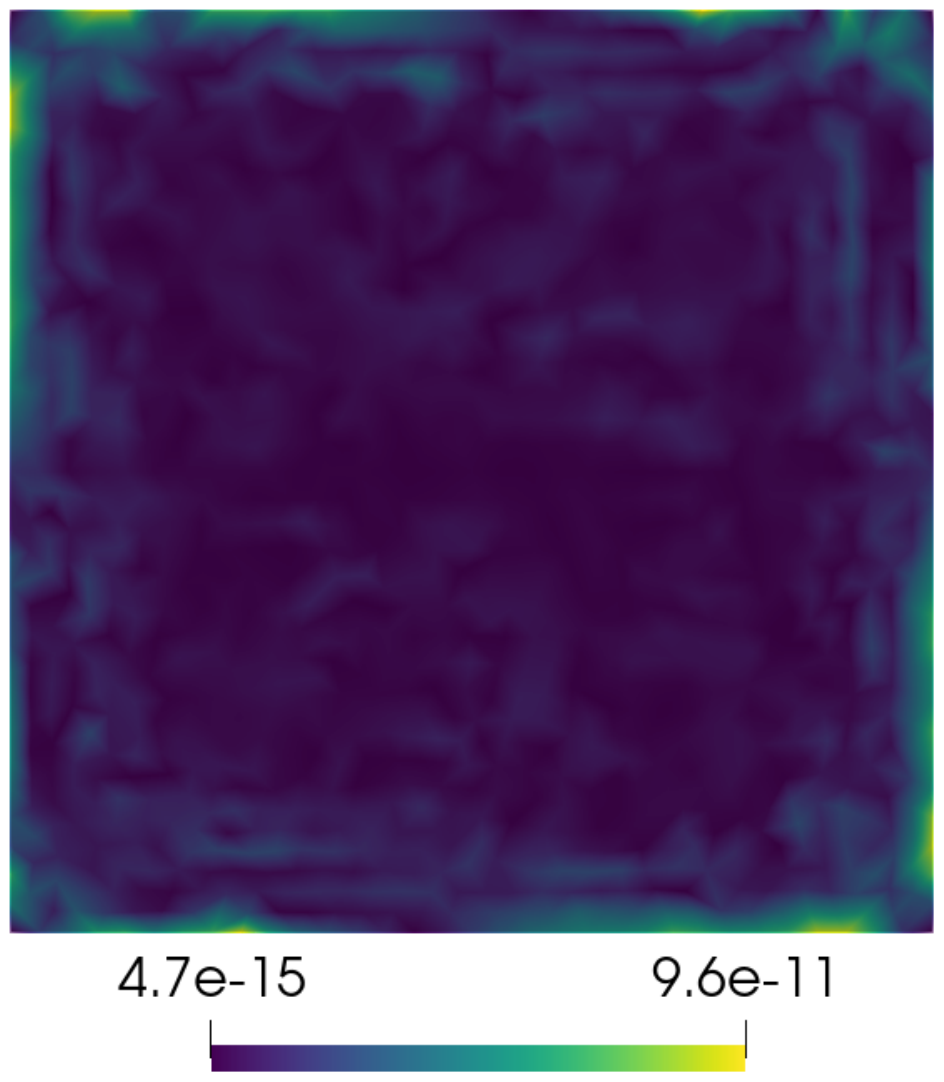} \\
        $t = 0.1$ & \includegraphics[align=c, width = 0.25\textwidth]{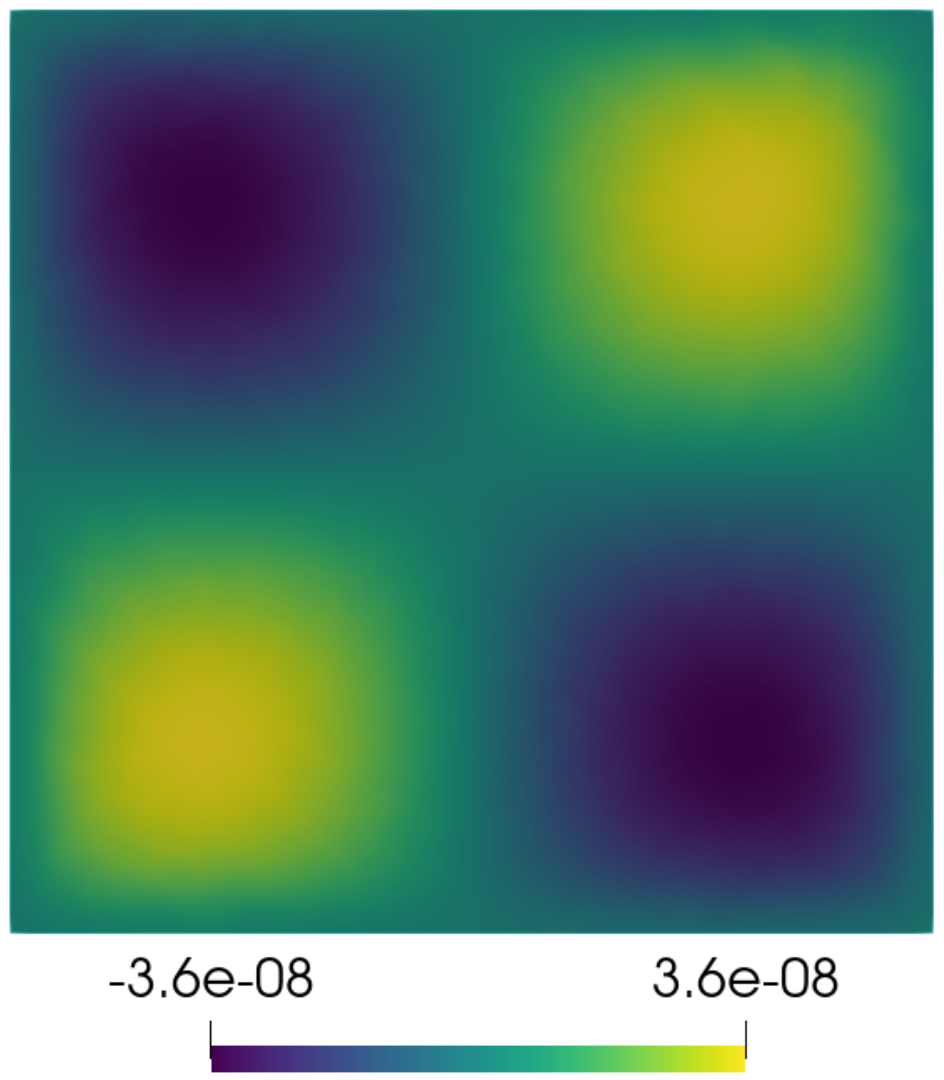} & \includegraphics[align=c, width = 0.25\textwidth]{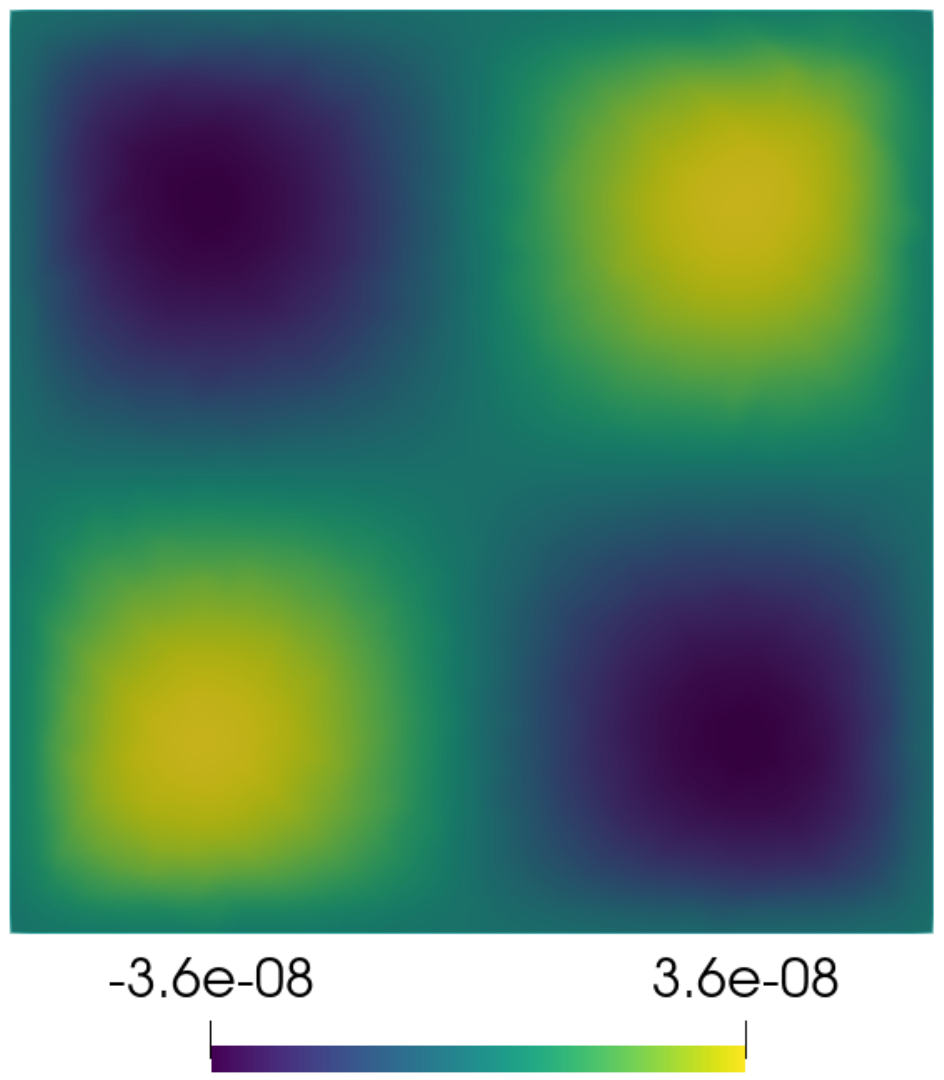} & \includegraphics[align=c, width = 0.25\textwidth]{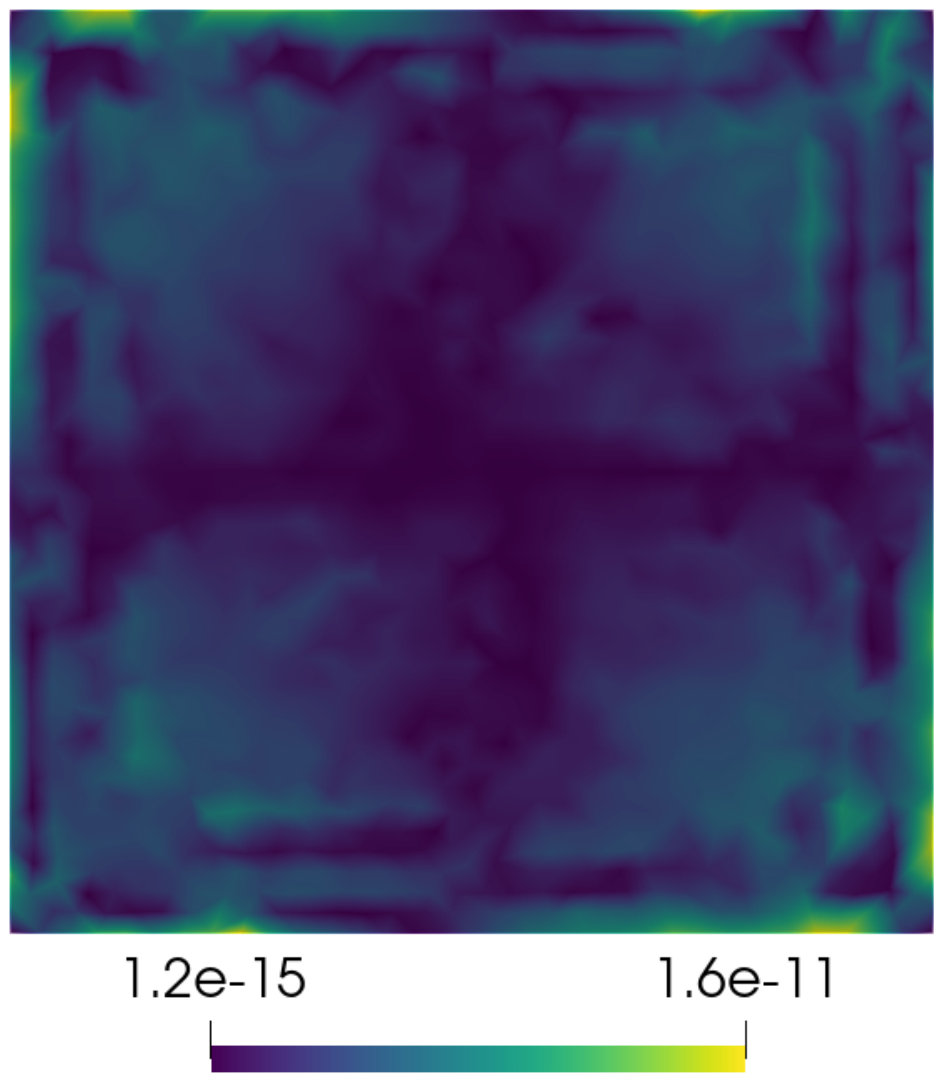}
    \end{tabular}
    \caption{Free vibrating plate: plate velocity $\dot{w}$ (left), vertical component $u_3$ of the fluid velocity at $z = 0$ (center), and their absolute difference (right) at $t = 0.01 \text{ (first row)}, 0.05 \text{ (second row)}, \text{ and }0.1 \text{ (third row)}$.}
    \label{fig:exp2_wdot_u3}
\end{figure}

\section{Conclusions} \label{sec:conclusions}

In this work, we developed and analyzed a finite element formulation for a coupled 3D fluid-2D plate interaction system, where the fluid is governed by the unsteady Stokes equations and the structure dynamics is modeled by a fourth order PDE. By reformulating the fourth-order equation as a system of second-order equations, we avoided the need for either $H^2-$conforming elements or nonconforming $\mathbb{P}_2$-Morley elements, and hence, we obtained flexibility in the choice of discrete spaces. The coupling condition between the fluid and the plate was enforced by introducing a Lagrange multiplier, leading to a saddle-point formulation of the problem.

We also established the well-posedness of both the time-discrete and fully-discrete problems, as well as the stability of the time-discrete problem, and derived a priori error estimates for the fully discrete scheme. A partitioned fixed-point algorithm was proposed for the numerical solution, in which the coupling condition in imposed as a Dirichlet boundary condition for the fluid subproblem. To overcome the mesh size limitation for the fluid domain noted in \cite{Geredeli_Kunwar_Lee2024}, we employed MUltifrontal Massively Parallel sparse direct Solver (MUMPS) \cite{MUMPS:1, MUMPS:2} for the fluid subproblem, which enabled simulations on finer meshes. Numerical experiments were presented to confirm the theoretical rates of convergence and demonstrate the applicability of the method to the physical problem of a free vibrating hinged plate.

In the future, we plan to develop a domain decomposition approach for this problem, similar to \cite{de_Castro2025}, based on a Schur complement strategy which enables the fluid and structure subproblems to be solved in parallel. Lastly, we plan to extend the present formulation to an Arbitrary Lagrangian-Eulerian (ALE) framework \cite{Donea1982}, allowing for the treatment of moving interfaces and time-dependent fluid domains.

\section*{Acknowledgements} 
{Hyesuk Lee was partially supported by the NSF under grant numbers DMS-2207971 and DMS-2513073.}

\bibliography{FSI} 

@article{de_Castro2025,
    author = {Amy de Castro and Hyesuk Lee and Margaret M. Wiecek},
    title = {{A Lagrange multiplier method for fluid-structure interaction: Well-posedness and domain decomposition}},
    journal = {Computers and Mathematics with Applications},
    volume = {181},
    pages = {193-215},
    year = {2025}
}

@article{Quaini2011,
    author = {Annalisa Quaini and Suncica Canic and David Paniagua},
    title = {{Numerical characterization of hemodynamics conditions near aortic valve after implantation of Left Ventricular Assist Device}},
    journal = {Mathematical Biosciences and Engineering},
    volume = {8},
    issue = {3},
    pages = {785-806},
    year = {2011}
}

@article{Duca2025,
    author = {Francesca Duca and Daniele Bissacco and Luca Crugnola and Chiara Faitini and Maurizio Domanin and Francesco Migliavacca1 and Santi Trimarchi and Christian Vergara},
    title = {{Computational analysis to assess hemodynamic forces in descending thoracic aortic aneurysms}},
    journal = {The Journal of Physiology},
    volume = {0},
    pages = {1-29},
    year = {2025}
}

@article{Quarteroni2000,
    author = {Alfio Quarteroni and Massimiliano Tuveri and Alessandro Veneziani},
    title = {{Computational vascular fluid dynamics: problems, models and methods}},
    journal = {Computing and Visualization in Science},
    volume = {2},
    pages = {163-197},
    year = {2000}
}

@article{Zheng2023,
    author = {Jiancai Zheng anf Nina Wang and Decheng Wan and Sergei Strijhak},
    title = {{Numerical investigations of coupled aeroelastic performance of wind turbines by elastic actuator line model}},
    journal = {Applied Energy},
    volume = {330},
    issue = {B},
    pages = {120361},
    year = {2023}
}

@article{Svacek2008,
    author = {Petr Svacek},
    title = {{Application of finite element method in aeroelasticity}},
    journal = {Journal of Computational and Applied Mathematics},
    volume = {215},
    issue = {2},
    pages = {586-594},
    year = {2008}
}

@article{Geredeli_Kunwar_Lee2024,
    author = {Pelin G. Geredeli and Hemanta Kunwar and Hyesuk Lee},
    title = {{Partitioning method for the finite element approximation of a 3D fluid-2D plate interaction system}},
    journal = {Numerical Methods for Partial Differential Equations},
    volume = {40},
    issue = {6},
    pages = {e23132},
    year = {2024}
}

@article{Avalos2014,
    author = {George Avalos and Thomas J. Clark},
    title = {{A mixed variational formulation for the wellposedness and numerical approximation of a PDE model arising in a 3-D fluid-structure interaction}},
    journal = {Evolution Equations and Control Theory},
    volume = {3},
    issue = {4},
    pages = {557-578},
    year = {2014}
}

@unpublished{Avalos2025,
  author = {George Avalos and Pelin G. Geredeli and Hemanta Kunwar and Hyesuk Lee},
  title = {{A novel approach to study the wellposedness of the 3D fluid-2D plate interaction PDE System}},
  year = {2025},
  note = {https://arxiv.org/abs/2509.03431},
}

@article{Chueshov2013,
    author = {Igor Chueshov and Iryna Ryzhkova},
    title = {{A global attractor for a fluid--plate interaction model }},
    journal = {Communications in Pure \& Applied Analysis},
    volume = {12},
    issue = {4},
    pages = {1635-1656},
    year = {2013}
}

@incollection{Ciarlet1974,
    title = {A Mixed Finite Element Method for the Biharmonic Equation},
    editor = {Carl {de Boor}},
    booktitle = {Mathematical Aspects of Finite Elements in Partial Differential Equations},
    publisher = {Academic Press},
    pages = {125-145},
    year = {1974},
    author = {P.G. Ciarlet and P.A. Raviart},
}

@article{Schaftingen2025,
    author = {Jean Van Schaftingen and Leon Winter},
    title = {{Trace theory for gauge-covariant Sobolev spaces}},
    journal = {Journal of Mathematical Analysis and Application},
    volume = {551},
    pages = {129697},
    year = {2025}
}

@article{Gagliardo1957,
    author = {E. Gagliardo},
    title = {{Caratterizzazioni delle tracce sulla frontiera relative ad alcune classi di funzioni in $n$ variabili}},
    journal = {Rendiconti del Seminario Matematico della Università di Padova},
    volume = {27},
    pages = {284-305},
    year = {1957}
}

@article{Sabbar2018,
    author = {Walaa A. Sabbar and Muneer A. Ismael and Mujtaba Almudhaffar},
    title = {{Fluid-structure interaction of mixed convection in a cavity-channel assembly of flexible wall}},
    journal = {International Journal of Mechanical Sciences},
    volume = {149},
    pages = {73-83},
    year = {2018}
}

@inproceedings{Hashim2012,
    author = {Uda Hashim and P.N. Diyana and Tijjani Adam},
    title = {{Numerical Simulation of Microfluidic Devices}},
    booktitle = {10th IEEE International Conference on Semiconductor Electronics (ICSE)},
    year = {2012}
}

@article{Nicolici2013,
    author = {S. Nicolici and R.M. Bilegan},
    title = {{Fluid structure interaction modeling of liquid sloshing phenomena in flexible tanks}},
    journal = {Nuclear Engineering and Design},
    volume = {258},
    pages = {51-56},
    year = {2013}
}

@article{Brummelen2009,
    author = {E. H. van Brummelen},
    title = {{Added Mass Effects of Compressible and Incompressible Flows in Fluid-Structure Interaction}},
    journal = {Journal of Applied Mechanics},
    volume = {76},
    issue = {2},
    pages = {021206},
    year = {2009}
}

@article{Gallistl2015,
    author = {Dietmar Gallistl},
    title = {{Morley finite element method for the eigenvalues of the biharmonic operator}},
    journal = {IMA Journal of Numerical Analysis},
    volume = {35},
    issue = {4},
    pages = {1779–1811},
    year = {2015}
}

@article{Cheng2008,
    author = {Lei Cheng and Robert D. White and Karl Grosh},
    title = {{Three-dimensional viscous finite element formulation for acoustic fluid–structure interaction}},
    journal = {Computer Methods in Applied Mechanics and Engineering},
    volume = {197},
    pages = {4160–4172},
    year = {2008}
}

@article{Hinton1986,
    author = {E. Hinton and H.C. Huang},
    title = {{A family of quadrilateral Mindlin plate elements with substitute shear strain fields}},
    journal = {Computers \& Structures},
    volume = {23},
    issue = {3},
    pages = {409-431},
    year = {1986}
}

@article{Geredeli2026,
    author = {Pelin G. Geredeli and Quyuan Lin and and Dylan McKnight and Mohammad Mahabubur Rahman},
    title = {{Analysis and numerical approximation to interactive dynamics of Navier Stokes-plate interaction PDE system}},
    journal = {Communications in Nonlinear Science and Numerical Simulation},
    volume = {152},
    pages = {109489},
    year = {2026}
}

@book{Solin2005,
    author = {Pavel Sol\'{i}n},
    title = {{Partial Differential Equations and the Finite Element Method}},
    publisher = {Wiley},
    year = {2005}
}

@article{Das2024,
    author = {Avijit Das and Bishnu P. Lamichhane and Neela Nataraj},
    title = {{A unified mixed finite element method for fourth-order time-dependent problems using biorthogonal systems}},
    journal = {Computers and Mathematics with Applications},
    volume = {165},
    pages = {52-69},
    year = {2024}
}

@article{Gudi2008,
    author = {Thirupathi Gudi and Neela Nataraj and Amiya K. Pani},
    title = {{Mixed Discontinuous Galerkin Finite Element Method for the Biharmonic Equation}},
    journal = {Journal of Scientific Computing},
    volume = {37},
    pages = {139–161},
    year = {2008}
}

@article{Glowinski1979,
    author = {R. Glowinski and O. Pironneau},
    title = {{Numerical methods for the first biharmonic equation and for the two-dimensional Stokes problem}},
    journal = {SIAM Review},
    volume = {21},
    issue = {2},
    pages = {167-212},
    year = {1979}
}

@article{Monk1987,
    author = {Peter Monk},
    title = {{A Mixed Finite Element Method for the Biharmonic Equation}},
    journal = {SIAM Journal on Numerical Analysis},
    volume = {24},
    issue = {4},
    pages = {737-749},
    year = {1987}
}

@article{Li2023,
    author = {Hengguang Li and Peimeng Yin and Zhimin Zhang},
    title = {{A $C^0$ finite element method for the biharmonic problem with Navier boundar’y conditions in a polygonal domain}},
    journal = {IMA Journal of Numerical Analysis},
    volume = {43},
    pages = {1779–1801},
    year = {2023}
}

@article{Causin2005,
    author = {P. Causin and J.F. Gerbeau and F. Nobile},
    title = {{Added-mass effect in the design of partitioned algorithms for fluid–structure problems}},
    journal = {Computer Methods in Applied Mechanics and Engineering},
    volume = {194},
    issue = {42},
    pages = {4506-4527},
    year = {2005}
}

@book{Richter2017,
    author = {Thomas Richter},
    title = {Fluid-structure Interactions: Models, Analysis and Finite Elements},
    publisher = {Springer },
    year = {2017}
}

@article{Brenner2018,
    author = {Susanne C. Brenner and Aycil Cesmelioglu and Jintao Cui and Li-Yeng Sung},
    title = {{A Nonconforming Finite Element Method for an Acoustic Fluid-Structure Interaction Problem}},
    journal = {Computational Methods in Applied Mathematics},
    volume = {18},
    issue = {3},
    pages = {383-406},
    year = {2018}
}

@article{MR3043640,
  AUTHOR = {Hecht, F.},
  TITLE = {New development in FreeFem++},
  JOURNAL = {Journal of Numerical Mathematics},
  VOLUME = {20}, YEAR = {2012},
  NUMBER = {3-4}, PAGES = {251--265}
}

@article{MUMPS:1,
   title   = {A Fully Asynchronous Multifrontal Solver Using Distributed Dynamic Scheduling},
   author  = {P.R. Amestoy and I. S. Duff and J. Koster and J.-Y. L'Excellent},
   journal = {SIAM Journal on Matrix Analysis and Applications},
   volume  = {23},
   number  = {1},
   year    = {2001},
   pages   = {15-41}
 }

@article{MUMPS:2,
  title = {{Performance and Scalability of the Block Low-Rank Multifrontal
  Factorization on Multicore Architectures}},
  author = {P.R. Amestoy and A. Buttari and J.-Y. L'Excellent and T. Mary},
  journal = {ACM Transactions on Mathematical Software},
  volume = 45,
  issue = 1,
  pages = {2:1--2:26},
  year={2019},
}

@article{Nguyen2021,
    author = {Duong T. A. Nguyen and Longfei Li and Hangjie Ji},
    title = {{Stable and accurate numerical methods for generalized Kirchhoff–Love plates}},
    journal = { Journal of Engineering Mathematics},
    volume = {130},
    pages = {6},
    year = {2021}
}

@article{Wang2025,
    author = {Xing Wang and Ivan Yotov},
    title = {{A Lagrange multiplier formulation for the fully dynamic Navier–Stokes–Biot system }},
    journal = {IMA Journal of Numerical Analysis},
    pages = {draf074},
    year = {2025}
}

@book{Quarteroni1994,
    author = {Alfio Quarteroni and Alberto Valli},
    title = {{Numerical Approximation of Partial Differential Equations}},
    publisher = {Springer Nature},
    year = {1994}
}

@article{Ambartsumyan2018,
    author = {Ilona Ambartsumyan and Eldar Khattatov and Ivan Yotov and Paolo Zunino†},
    title = {{A Lagrange multiplier method for a Stokes–Biot fluid–poroelastic structure interaction model}},
    journal = {Numerische Mathematik},
    volume = {140},
    pages = {513–553},
    year = {2018}
}

@article{Donea1982,
    author = {J. Donea and S. Giuliani and J.P. Halleux},
    title = {{An arbitrary Lagrangian-Eulerian finite element method for transient dynamic fluid-structure interactions}},
    journal = {Computer Methods in Applied Mechanics and Engineering},
    volume = {33},
    issue = {1-3},
    pages = {689-723},
    year = {1982}
}

@book{LionsBooks1972,
  author    = { J. L. Lions and E. Magenes},
  title     = {{Non-Homogeneous Boundary Value Problems and Applications }},
  publisher = {Springer Nature},
  year      = {1972},
  volume    = {1}
}

@book{Evans2010,
    author = {Lawrence C. Evans},
    title = {{Partial Differential Equations}},
    publisher = {American Mathematical Society},
    year = {2010},
    edition = {2}
}

@book{Layton2008,
    author = {William Layton},
    title = {{Introduction to the Numerical Analysis of Incompressible Viscous Flows}},
    publisher = {Society for Industrial and Applied Mathematics},
    year = {2008}
}

@book{Ern_Guermond2004,
    author = {Alexandre Ern and Jean-luc Guermond},
    title = {Theory and Practice of Finite Elements},
    publisher = {Springer},
    year = {2004}
}

\end{document}